\title{What is a lattice W-algebra?}
\author{ Anton Izosimov and Gloria Mar\'i Beffa}
\address{Department of Mathematics\\ University of Arizona\\ Tucson AZ 85716}\address{Mathematics Department\\ University of Wisconsin\\ Madison WI 53706}
\thanks{The authors gratefully acknowledge support:\, AI through National Science Foundation grant DMS-2008021, GMB  through research funding from the College of Letters \& Science at UW-Madison}
\newtheorem{theorem}{Theorem}
\numberwithin{theorem}{section}
\newtheorem{corollary}[theorem]{Corollary}
\newtheorem{lemma}[theorem]{Lemma}
\newtheorem{comment}[theorem]{Comment}
\newtheorem{proposition}[theorem]{Proposition}
\theoremstyle{definition}
\newtheorem{definition}[theorem]{Definition}
\newtheorem{example}[theorem]{Example}
\newtheorem{remark}[theorem]{Remark}
\def\RR{\mathbb R}
\def\RP{\mathbb{RP}}
\def\Z{\mathbb Z}
\def\SL{\mathrm {SL}}
\def\GL{\mathrm {GL}}
\def\PSL{\mathrm {PSL}}
\def\r{\widehat{r}}
\def\tr{\mathrm{Tr}}
\def\T{\mathcal{T}}
\def\L{\mathcal{L}}
\def\So{\mathcal{S}}
\def\F{\mathcal{F}}
\def\G{\mathcal{G}}
\def\N{\mathcal{N}}
\def\H{\mathcal{H}}
\def\sl{\mathfrak{sl}} 
\def\b{\mathfrak{b}}
\def\g{\mathfrak{g}}
\def\h{\mathfrak{h}}
\def\a{\mathbf{a}}
\def\0{\mathbf{0}}
\def\hq{\hat q}
\renewcommand{\P}{\mathbb{P}}
\newcommand{\PGL}{\mathrm{PGL}}
\newcommand{\Id}{\mathrm{Id}}
\newcommand{\gl}{\mathrm{gl}}
\newcommand{\Tr}[1]{\mathrm{Tr}\,#1}
\newcommand{\DO}[2]{\mathrm{DO}(#1, #2)}
\newcommand{\RDO}[2]{\mathrm{RDO}(#1, #2)}
\newcommand{\IDO}[2]{\mathrm{IDO}(#1, #2)}
\newcommand{\PBDO}[2]{\mathrm{PBDO}(#1, #2)}
\newcommand{\PSIDO}[1]{\Psi\mathrm{DO}({#1})}
\newcommand{\PDO}[1]{\mathrm{DO}({#1})}
\newcommand{\IPDO}[1]{\mathrm{IDO}({#1})}
\newcommand{\IPSIDO}[1]{\mathrm{I}\Psi\mathrm{DO}({#1})}
\renewcommand{\log}{\ln}
\begin{document}
\begin{abstract} 
We employ the Poisson-Lie group of pseudo-difference operators to define lattice analogs of classical $W_m$-algebras. We then show that the so-constructed algebras coincide with the ones given by discrete Drinfeld-Sokolov type reduction.
\end{abstract}
\maketitle

\tableofcontents

\section{Introduction and Background}
It is well-known that the space of monic order $m$ periodic differential operators without sub-leading term, i.e. operators of the form $$u_0(x) + \ldots + u_{m-2}(x) \partial^{m-2} + \partial^m,
$$
where $\partial := \partial / \partial x$ and the coefficients $u_i(x)$  are periodic functions, has a remarkable Poisson structure, called the second Adler-Gelfand-Dickey bracket \cite{GD, Adler}. The corresponding Poisson algebra is known as the classical $W_m$-algebra and can be identified with the semi-classical limit of the $W_m$-algebra arising in conformal field theory \cite{zamolodchikov1985infinite}. In connection with integrable systems, Adler-Gelfand-Dickey structures are best known as Poisson brackets for KdV-type equations. In particular, the $W_2$-structure coincides with the second Poisson bracket for the classical KdV equation and can be interpreted as the Lie-Poisson structure on the dual of the Virasoro algebra. The present paper concerns the problem of discretization of Adler-Gelfand-Dikii brackets. 

\par

Recall that in the continuous setting the Adler-Gelfand-Dickey bracket can be constructed in two different ways: either from the multiplicative structure on the algebra of formal pseudo-difference operators (which can also be interpreted as a Poisson-Lie structure on the extended group of such operators \cite{khesin1995poisson}), or by performing the so-called Drinfeld-Sokolov reduction to a specific symplectic leaf in the dual of an affine (Kac-Moody) Lie algebra \cite{DS}\footnote{There exists yet another equivalent definition of a $W$-algebra as the center of a certain completion of the universal
enveloping algebra of an affine algebra \cite{feigin1992affine}. We do not discuss this construction or its possible discretizations in the present paper. }. The goal of the present paper is to show that, when properly understood, both constructions admit a discretization (a lattice version). Moreover, we show that, similarly to the continuous setting,  both discrete constructions yield the same result. Thus, there is a well defined notion of a \textit{lattice $W_m$-algebra}. As special cases, one recovers familiar structures. Namely, for $m =2$ one obtains the lattice Virasoro algebra of Faddeev-Takhtajan-Volkov~\cite{volkov1988miura, faddeev2016liouville} (also known as a cubic Poisson structure associated to the Volterra lattice \cite{suris1999integrable}), while $m =3$ corresponds to the lattice $W_3$-algebra of Belov-Chaltikian~\cite{belov1993lattice}.

The discrete version of the Drinfeld-Sokolov reduction is not new: in a 2013 paper with Wang \cite{beffa2013hamiltonian}, followed by a 2020 paper with Calini \cite{beffa2020compatible}, the second author showed
the existence of two compatible Poisson structures for lattice KdV equations, one
of which can be thought of as a discretization of the second Adler-Gelfand-Dickey
bracket and is obtained via a Drinfeld-Sokolov type reduction process. The new ingredients of the present paper is the second construction based on the Poisson-Lie group of scalar operators, as well as the proof of equivalence of these two approaches. Below we describe these two constructions both in the continuous and discrete settings and formulate our main theorem on the coincidence of two definitions of lattice $W_m$-algebras. 

First, we recall the construction of continuous $W_m$-algebras (Adler-Gelfand-Dickey brackets) based on scalar differential operators. Consider the associative algebra of periodic formal pseudo-differential symbols, i.e. infinite in the negative direction Laurent series over periodic functions in terms of the differentiation operator $\partial := \partial/\partial x$. This algebra has a natural decomposition into a direct sum of two subalgebras: the subalgebra of differential operators, and the complementary subalgebra of integral operators. The difference of projectors onto these subalgebras defines an $r$-matrix which is skew-symmetric with respect to Adler's trace form. Therefore, the associated Sklyanin bracket gives a multiplicative Poisson structure on periodic pseudo-differential symbols, and in particular on periodic differential operators. Furthermore, that bracket restricts to monic operators. The final step of the construction is to take the quotient by the adjoint action
$$
u_0 + \ldots + u_{m-1} \partial^{m-1} + \partial^m \mapsto f \circ (u_0 + \ldots + u_{m-1} \partial^{m-1} + \partial^m) \circ f^{-1},
$$
where $f$ is a non-vanishing function with periodic logarithmic derivative. The corresponding quotient space can be identified with the space of operators without sub-leading term, and the quotient Poisson structure arising on that space is precisely the Adler-Gelfand-Dickey bracket. Alternatively, one can take the quotient by the same action but with periodic $f$. This leads to the space of operators with constant sub-leading term, in which operators with zero sub-leading term are embedded as a Poisson submanifold \cite{khesin1995poisson}.\par
The space of monic order $m$ periodic differential operators without sub-leading term has a geometric interpretation. Namely, for any such operator let $f_1, \dots, f_m$ be a basis in its kernel. Then $\mu(x) := (f_1(x) : \ldots :f_m(x))$ is a curve in the projective space $\RP^{m-1}$. Upon a change of basis, this curve undergoes a projective transformation. Furthermore, due to the periodicity of the differential operator, this curve is quasi-periodic i.e. closed up to a projective transformation, known as the \textit{monodromy}. One can show that this construction defines a one-to-one correspondence between monic order $m$ periodic differential operators without sub-leading term and projective equivalence classes of quasi-periodic curves in $\RP^{m-1}$, satisfying a certain non-degeneracy condition~\cite{ovsienko1990symplectic}.  Thus, the Adler-Gelfand-Dickey bracket can be viewed as a Poisson structure on equivalence classes of curves in the projective space.

There has been a large body of work on discrete analogs of  the Adler-Gelfand-Dickey bracket, i.e. discrete $W_m$-algebras, both in the $q$-difference \cite{PST, semenov1998drinfeld, frenkel1996quantum, frenkel1998drinfeld} and difference  \cite{faddeev2016liouville, belov1993lattice, beffa2013hamiltonian, volkov1988miura, hikami1997classical, mansfield2013discrete} settings. However, it seems that the above direct construction producing the Adler-Gelfand-Dickey bracket from the multiplicative Poisson structure on differential operators has never been discretized. The main difficulty encountered on this path is that although both difference and $q$-difference operators admit natural multiplicative Poisson structures, these structures do not restrict to monic operators. This difficulty is usually overcome by considering various non-multiplicative modifications of the Sklyanin bracket, cf. e.g. \cite{PST}. The goal of the first part of the present paper (Section \ref{sec:scalar}) is to show that this problem can in fact be resolved fully within the multiplicative setting. To that end, we slightly change the perspective on the classical Adler-Gelfand-Dickey bracket. Namely, observe that instead of reducing \textit{monic} operators with respect to the adjoint action, one can consider \textit{all} order $m$ operators with non-vanishing leading term and reduce with respect to the \textit{left-right action}
 $$
u_0 + \ldots +  u_m\partial^m \mapsto f \circ (u_0 + \ldots +  u_m \partial^m) \circ g^{-1},
$$
where $f$ and $g$ are functions with periodic logarithmic derivatives and such that $f/g$ is periodic. This quotient can again be identified with monic operators without sub-leading term, and the reduced bracket coincides with the Adler-Gelfand-Dickey bracket. Indeed, the left action of functions on order $m$ operators admits a Poisson section given by monic operators, so reduction by that action is equivalent to restriction. Therefore, taking the quotient by the left-right action is equivalent to first restricting to monic operators and then taking the quotient by the adjoint action.  

This modified construction of the Adler-Gelfand-Dickey bracket admits an immediate discretization. In the difference case, consider the space of upper-triangular $N$-periodic difference operators of degree $m$, i.e. operators of the form
$$
u^0 + \ldots +  u^m\T^m,
$$
where each $u^j$ is an $N$-periodic bi-infinite sequence of real numbers $u^j_i \in \RR$, and $\T$ is the left shift operator on bi-infinite sequences, i.e. $(\T v)_i := v_{i+1}$. This space has a Poisson structure inherited from the natural multiplicative bracket on all periodic difference operators (as in the differential case, the multiplicative bracket on difference operators can be viewed as a Poisson-Lie structure; however, just like differential operators, difference operators do not form a group and hence must be extended to pseudo-difference ones in order to obtain a Poisson-Lie interpretation). The latter multiplicative bracket is in fact well familiar: the algebra of all $N$-periodic difference operators is isomorphic to the  loop algebra of Laurent polynomials in one variable with values in $N \times N$ matrices, and the corresponding bracket on the loop algebra is given by the standard trigonometric $r$-matrix. In the setting of difference operators, this $r$-matrix and the corresponding Poisson structure were considered e.g. in \cite{oevel1997poisson, gekhtman2019periodic}.
\begin{definition}
The \textit{lattice $W_m$-algebra} is the quotient of degree $m$ upper-triangular $N$-periodic difference operators by the left-right action
\begin{equation}\label{eq:lraction}
u^0 + \ldots +  u^m\T^m \mapsto f \circ (u^0 + \ldots +  u^m\T^m) \circ g^{-1},
\end{equation}
where $f$ and $g$ are bi-infinite sequences satisfying the quasi-periodicity conditions $f_{i+N} /f_i= g_{i+N}/g_i = \lambda $ for some constant $\lambda \in \RR \setminus 0$.
\end{definition}
For $m = 2,3$, we recover familiar structures. Namely, for $m =2$ we obtain the lattice Virasoro algebra of Faddeev-Takhtajan-Volkov \cite{volkov1988miura, faddeev2016liouville} (see Section \ref{app:voltproof}). Likewise, for $m =3$ one gets the lattice $W_3$-algebra of Belov-Chaltikian \cite{belov1993lattice} (this is a long but straightforward calculation which we omit). We also believe that our algebras can be constructed via the $q \to \sqrt[N]{1}$ limit from $q$-difference $W_m$-algebras of~\cite{PST, semenov1998drinfeld, frenkel1996quantum, frenkel1998drinfeld}. Moreover, we expect that those $q$-difference $W_m$-algebras themselves can be obtained using a $q$-difference version of the above construction, i.e. by double reduction of the space of fixed order $q$-difference operators.


Also note that when the degree $m$ and the period $N$ are coprime, the action \eqref{eq:lraction} admits a section given by operators of the form
\begin{equation}\label{redmonic}
(-1)^m + v^1 \T + \ldots +  v^{m-1}\T^{m-1} + \T^m,
\end{equation}
so one can in principle identify the lattice $W_m$-algebra with operators of such form. However, the Poisson bracket written in terms of entries of $v^1, \dots, v^{m-1}$ is non-local. As we will see below, a better coordinate system is provided by identifying the lattice $W_m$-algebra with the space of \textit{twisted polygons}, i.e. bi-infinite sequences of points $\mu_i \in \RP^{n-1}$ satisfying $\mu_{i+N} = \phi(\mu_i)$ for a certain projective transformation $\phi$ (the monodromy). The correspondence between difference operators and polygons is constructed in exactly the same way as between differential operators and curves:  given an order $m$ difference operator, one constructs a polygon by the rule $\mu_i := (f^1_i : \ldots :f^m_i)$, where $f^1, \dots, f^m$ is any basis of the kernel. \par
\smallskip 
Using a different point of view, the authors of \cite{beffa2013hamiltonian} constructed a pair of discrete Hamiltonian structures using a modified Drinfeld--Sokolov reduction process. In the continuous case, the reduction coincided with the Adler-Gelfand-Dickey bracket, as shown in \cite{MB2}. In the case of $G= \SL(m)$, the original Drinfeld--Sokolov reduction~\cite{DS} can be described as follows:  consider matrix differential operators of the form
\[
\L = \frac{d}{dx} + q(x) + \Lambda
\]
where the function $q\in C^\infty(S^1, \b)$ has values in the Borel subalgebra $\b \subset \sl(m)$ of upper triangular matrices, and $\Lambda = \sum_{i=1}^{m-1} E_{i+1,i}$, where by $E_{i,j}$ we mean a matrix with $1$ in the $(i,j)$ entry and zeroes elsewhere. While \cite{DS} includes a spectral parameter in their description, the spectral parameter will not be relevant in this paper and we will assume it to vanish. The authors of \cite{DS} showed  that any element in $C^\infty(S^1, \SL(m))$  can be transformed to a certain normal form using the gauge action
\[
\mathcal L \mapsto \So^{-1} \L \So
\]
where $\So \in C^\infty(S^1, \N)$, with $\N$ being the subgroup of $\SL(m)$ given by the exponential of strictly upper triangular matrices.  The normal form reads
\begin{equation}\label{can}
\L^{can} = \frac{d}{dx} + \sum_{i=1}^{m-1} v_i(x) E_{i,m} + \Lambda.
\end{equation}
Therefore, $\L^{can}$ can be seen as defining a  section transverse to the quotient $C^\infty(S^1, \b)/ C^\infty(S^1, \N)$, with $C^\infty(S^1, \N)$ acting on $C^\infty(S^1, \b)$ using the gauge action.

If $\g$ is the Lie algebra of $G$ with $G$ semisimple, the space $C^\infty(S^1, \g^\ast)$ has a natural Lie-Poisson structure
given by 
\[
\{\F, \G\}(\xi) = \int_{S^1}\langle [\delta \F, \delta \G], \xi\rangle dx.
\]
The cocycle $\omega:C^\infty(S^1,\g)\times C^\infty(S^1,\g) \to \RR$ given by
\[\omega(\xi, \mu) = \int_{S^1} \langle\frac{d\xi}{dx}, \mu\rangle dx
\]
defines the Poisson bracket
\begin{equation}\label{central}
\{\F, \G\}(\xi,s) = \int_{S^1}\left(\langle [\delta \F, \delta \G], \xi\rangle + s \langle\frac{d\delta \F}{dx}, \delta \G \rangle\right)dx.
\end{equation}
Here $\g$ and $\g^\ast$ are identified using an invariant inner product in $\g$ - in our case it is defined by the trace of the product. The bracket (\ref{central} is the Lie-Poisson bracket of the central extension  $C^\infty(S^1,\g^\ast)\times \mathbb{R}$, and it foliates into Poisson submanifolds corresponding to a fixed value of the real component $s$. When $s=1$ the symplectic leaves are given by gauge orbits, with the gauge action described as above. Drinfeld and Sokolov proved that \eqref{central} with $s=1$ can be reduced to $C^{\infty}(S^1, \b)/C^\infty(S^1, \N)$, and the resulting Poisson bracket is equivalent to the bracket previously described on the space of scalar operators. (One can reduce for any value of $s\ne 0$, all brackets are equivalent.)

The author of \cite{MB2} took a different approach. She identified the canonical form \eqref{can} as defining the Maurer-Cartan equation for projective curves, with $v_i$ its projective invariants. The moving frame associated to a curve would be $\rho(x) \in \SL(m)$, solution of $\L^{can}(\rho) = 0$, and the curve would be $\mu(x)$, the projectivization of $\gamma(x) = \rho(x) e_1$. Using the fact that that $\RP^{m-1} = \PSL(m)/H$, with $H$ the isotropic subgroup of the origin, defined by matrices of the form 
\[
\begin{pmatrix} \ast_1 & \ast\\ 0&\ast_{m-1}\end{pmatrix}\in \SL(m)
\]
she proved that by substituting $\b$ with $\sl(m)$, and $\N$ by $H$, and using the same action and central extension, one could show two facts: (a) the canonical forms \eqref{can} define a section of the new quotient; and (b) the Poisson structure \eqref{central} reduces to this new quotient and produces a Poisson bracket on the space of equivalence classes of projective curves under the action of the projective group. The reduced Poisson bracket was equal to that of Drinfeld and Sokolov. She also found a straightforward way to identify the curve evolutions
\[
\gamma_ t = X^f(\gamma)
\]
inducing the $f$-Hamiltonian flow on the invariants $v_i$. (These curve evolutions are uniquely defined on the lifts of projective evolutions to $\RR^m$.) The authors of \cite{CaliniIveyMB} defined the reduced Poisson bracket as pre-symplectic forms in the space of projective vector fields of curves in the projective plane. The authors of \cite{Terng19} described these pre-symplectic forms for general dimensions.

 This geometric approach has a straightforward discretization. As in the continuous case, the authors of \cite{beffa2013hamiltonian} used the representation of $\RP^m = \PSL(m)/H$ as homogeneous space, and envisioned the space of classes of {\it projective twisted $N$-polygons} as a quotient $\PSL(m)^N/H^N$, with $H^N$ acting on $\PSL(m)^N$ by right discrete gauges.  Recall that a \textit{twisted $N$-gon} in $\RP^{m-1}$ is a bi-infinite sequence $\mu_n \in \RP^{m-1}$, $n\in \Z$, satisfying $\mu_{n+N} = \phi(\mu_n)$ for a certain projective transformation $\phi$ called {\it the monodromy}, a certain period $N\in \Z$, and any $n$. The right (respectively left) discrete gauge action is defined as a map $H^N\times G^N \to G^N$ given by
 \[
 (h_i, g_i) \mapsto (h_{i+1} g_i h_i^{-1}), \quad\quad (\text{respectively}~ (h_i^{-1} g_i h_{i+1})).
 \]
 Discrete gauge actions are Poisson maps for the well-known {\it twisted bracket}, originally defined by Semenov-Tian-Shansky in  \cite{semenov85}.  The authors of \cite{beffa2013hamiltonian} showed that Semenov-Tian-Shansky's bracket can be reduced to $\PSL(m)^N/H^N$,  defining a Poisson structure on the set of projective equivalence classes of twisted polygons. The reduction was a Hamiltonian structure for generalized Boussinesq lattice system of equations, and the authors found a second Hamiltonian structure and a master symmetry for the system.  
 
 Furthermore, the authors of \cite{beffa2013hamiltonian} showed the existence of a local vector field in the space of projective twisted polygons 
 \[
 \gamma_t = X^f(\gamma)
 \]
 with $\gamma$ being a uniquely chosen lift to $\RR^m$, inducing a reduced Hamiltonian evolution on the flow of equivalence classes, with Hamiltonian function $f$. These vector fields will be central to our main proof in the fourth section.
 

In our last chapter we prove that these two constructions produce the same Hamiltonian structure. The connection of the scalar case in the first half of the paper to the discretization of the Drinfeld-Sokolov reduction in the second half is readily provided by projective polygons: as we described above,
geometrically, the quotient of the space of $N$-periodic order $m$ difference operators by left and right multiplication is the set of projective equivalence classes of twisted polygons. Thus, the lattice $W_m$-algebra defined in the first half of the paper can be viewed as a Poisson structure on projective equivalence classes of twisted polygons. The results in our last chapter show that, given an invariant Hamiltonian $f$, one can readily identify a unique invariant polygonal vector field $Y^f$ directly from the scalar equations of the Poisson structure. This vector field induces the reduced Hamiltonian evolution with Hamiltonian function $f$ on monic operators of the form \eqref{redmonic}. If $X^f$ is the vector field above, the main theorem of our paper states
\begin{theorem}
If $X^f$ is defined by the reduction in \cite{beffa2013hamiltonian} and $Y^f$ is defined by the the discrete $W_m$-algebras, then, 
\[
X^f = Y^f.
\]
\end{theorem}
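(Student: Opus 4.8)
The plan is to reduce the identity $X^f = Y^f$ to the coincidence of two Poisson structures on the space of projective equivalence classes of twisted $N$-gons, and then to prove that coincidence by relating the scalar $r$-matrix structure on difference operators to the Semenov-Tian-Shansky twisted bracket through the companion-matrix correspondence. Recall first that both $X^f$ and $Y^f$ are invariant polygonal vector fields, each living on a canonically normalized lift of the polygon to $\RR^m$, and each uniquely characterized by the property that the flow it induces on projective classes is the $f$-Hamiltonian flow --- for the scalar bracket in the case of $Y^f$, and for the bracket of \cite{beffa2013hamiltonian} in the case of $X^f$ --- together with the normalization of the lift. Since the two lifts are normalized in the same way (both fix the Casorati determinant of $m$ consecutive vertices), it suffices to show that the scalar reduced bracket and the discrete Drinfeld--Sokolov bracket define the same bivector on projective classes of twisted polygons. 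Both of them are defined on this common space via the dictionary between order $m$ difference operators and polygons from Section~\ref{sec:scalar}, so the content of the theorem is precisely the equality of these two reduced brackets.

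To establish that equality I would pass through the companion-matrix picture. Given an order $m$ difference operator $L$ with nonvanishing leading and trailing coefficients, a choice of basis of $\Ker L$ produces a frame along the associated polygon, and consecutive frames differ by a companion matrix $A_i \in \GL(m)$; thus $L$ is recast as a first-order matrix difference operator, i.e. a point of $\GL(m)^N$ well defined up to the discrete gauge action, and the left--right action \eqref{eq:lraction} goes over precisely to that gauge action. The key lemma would be that this recasting is also compatible with the Poisson structures, carrying the multiplicative bracket on difference operators to the twisted bracket on $\GL(m)^N$, each restricted to its relevant locus. Granting the lemma, the scalar reduction and the discrete Drinfeld--Sokolov reduction are literally the same Poisson reduction of the same Poisson manifold; the section \eqref{redmonic} and the discrete canonical form of \cite{beffa2013hamiltonian} are then two sections of one quotient, the two reduced brackets coincide, and hence $X^f = Y^f$.

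A more hands-on route --- the one closest in spirit to the vector fields $X^f$ that \cite{beffa2013hamiltonian} writes down explicitly --- is to compute the two Hamiltonian vector fields directly and match them. On the scalar side, the Hamiltonian vector field of $f$ for the multiplicative bracket has the standard quadratic Poisson--Lie form: a fixed combination of the difference- and integral-operator parts of $\delta f \circ L$ and $L \circ \delta f$, where $\delta f$ is the gradient of $f$ relative to Adler's trace form; one then adds the infinitesimal left--right gauge transformation needed to keep the flow tangent to the section \eqref{redmonic} and translates the result into a polygon evolution, which is $Y^f$. On the matrix side, $X^f$ is produced from the Hamiltonian vector field of $f$ for the twisted bracket, corrected by an infinitesimal gauge transformation to keep it tangent to the discrete canonical form. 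The matching then comes down to two facts: the companion change of variables carries the scalar gradient $\delta f$ to the matrix gradient of $f$, and the two gauge corrections correspond.

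The main obstacle, in either route, is exactly this comparison of the two gradient/$r$-matrix structures under the companion correspondence, together with the accompanying gauge bookkeeping: the scalar gradient lives among pseudo-difference operators paired by Adler's trace, the matrix gradient lives in $\gl(m)^N$ paired by the ordinary trace, and identifying them means unwinding the nonlinear companion change of variables and checking that the projectors entering the two Sklyanin-type brackets are intertwined. Everything else --- invariance of the vector fields, uniqueness and normalization of the lift, the identification of the two sections with the polygon space --- is either already available from the earlier sections or a routine verification. It is worth noting that the reformulation of the Adler-Gelfand-Dickey bracket via the left--right action in Section~\ref{sec:scalar} is what makes this comparison tractable: it is precisely what renders the scalar reduction structurally parallel to the Drinfeld--Sokolov one.
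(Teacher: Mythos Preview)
Your proposal is a strategy, not a proof: the single step that carries all the weight --- the ``key lemma'' that the companion correspondence intertwines the multiplicative bracket on order-$m$ difference operators with the twisted bracket on $\GL(m)^N$ --- is stated but not established, and you explicitly flag it as ``the main obstacle.'' The difficulty is real. The scalar bracket comes from the trigonometric $r$-matrix on $\PSIDO{N}$, which under Remark~\ref{app:loopgroup} is a structure on the loop algebra of $\gl(N)$, while the twisted bracket uses the standard $r$-matrix on $\gl(m)$; the companion map is a nonlinear change of variables between a section of one quotient and a section of the other, and there is no formal reason these two Sklyanin-type structures should match. In fact, the statement that the two \emph{reduced} brackets agree is exactly the content of the corollary the paper derives \emph{from} $X^f = Y^f$, so your first route is in danger of circularity unless you supply an independent argument for the lemma. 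Note also that companion matrices are not a Poisson submanifold of $\GL(m)^N$ but a section of the gauge quotient, so ``restricted to its relevant locus'' already presupposes the reduction and does not give you a pre-reduction Poisson map to work with.

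Your second, ``hands-on'' route is the one the paper actually takes, and it is not a matter of bookkeeping: one writes $Y^f_s = r(\delta_{D_s}\F\, D_s)(\gamma_s)$, expands the operator, uses $D(\gamma)=0$ to rewrite everything in the basis $\gamma_s,\dots,\gamma_{s+m-1}$, and then matches the coefficient of each $\gamma_{s+\ell}$ against the entry $e_{\ell+1}^T Q_s^f e_1$, where $Q^f$ is pinned down by Lemmas~\ref{uno} and~\ref{dos} via the relation $\nabla F = A^{-1}\nabla' F\,A$ iterated along the chain of companion matrices. The cases $\ell = m-1$, $1\le \ell\le m-2$, and $\ell = 0$ are handled separately. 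None of this is conceptually deep, but it is the actual content of the proof, and your proposal does not carry it out.
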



\section{A discretization of the Adler-Gelfand-Dickey bracket: Lattice $W_m$-algebras}\label{sec:scalar}
\subsection{Lattice $W_m$-algebras as polygon spaces}
We define a \textit{twisted $N$-gon} in $\P^{m-1}$
as a bi-infinite sequence of points $(p_i \in \P^{m-1})_{i \in \Z}$, along with
a projective transformation $\phi \in \PGL(m)$, called \textit{the monodromy}, such that
$p_{i+N} = \phi(p_i)$ for all $i$. We also assume that for every $i \in \Z$ the points $p_i, \dots, p_{i+m-1}$ do not belong to any proper projective subspace $\P^d \subset \P^{m-1}$. In other words, any lifts of these points to vectors in $\RR^m$ span $\RR^m$. Polygons with these property are sometimes called \textit{non-degenerate}. All polygons we consider have this property, so we refer to non-degenerate polygons simply as polygons. 

Denote the space of twisted $N$-gons  in $\P^{m-1}$ satisfying this condition by $\widetilde W_{m}$ (as is usually done when dealing with $W_m$-algebras, we suppress the dependence on $N$). The space $W_m$ is then defined as the quotient of $\widetilde W_m$ by the natural action of $\PGL(m)$. 

We aim to construct a Poisson structure on the space  $W_m$. To that end, we describe the latter space in terms of order $m$ difference operators. We say that a linear operator on the space $\RR^\infty$ of bi-infinite sequences is \textit{an upper-triangular difference operator of order $m$} if it has the form
\begin{equation}\label{app:operator}
 D =\sum_{i = 0}^m a^i \T^i,
\end{equation} 
where $a^i \in \RR^\infty$ are bi-infinite sequences acting on $\RR^\infty$ by term-wise multiplication, while $\T$ is the \textit{left} shift operator $(\T \xi)_j = \xi_{j+1} $ (the term \textit{upper-triangular} is used to distinguish such operators from those which may also contain terms of negative power in $\T$). The operator  \eqref{app:operator} is called \textit{$N$-periodic} if all its coefficients $a^i$ are $N$-periodic sequences, which is equivalent to saying that $\mathcal D T^N = T^N \mathcal D$.
We denote the space of $N$-periodic upper-triangular difference operators of order $m$ by $\DO{N}{m}$. 
The operator \eqref{app:operator} is called \textit{properly bounded} if its coefficient $a^0$ of lowest power in $\T$ and the coefficient $a^m$ of highest power in $\T$ are sequences of non-zero numbers. It is easy to see that the kernel of any properly bounded upper-triangular difference operator of order $m$  has dimension $m$. The space of properly bounded $N$-periodic upper-triangular difference operators of order $m$ will be denoted by $\PBDO{N}{m}$.

\begin{proposition}\label{app:identification}
There is a natural one-to-one correspondence between the following two sets: \begin{enumerate} \item The space $W_m$ of projective equivalence class of twisted $N$-gons in $\P^{m-1}$. \item The space $\PBDO{N}{m}$ of properly bounded $n$-periodic order $m$ upper-triangular difference operators, modulo the action
\begin{equation}\label{app:action}
\mathcal D \mapsto \alpha \mathcal D \beta^{-1},
\end{equation}
where $\alpha, \beta$ are $N$-quasi-periodic sequences of non-zero numbers with the same monodromy (i.e. there exists $z \in \RR^*$ such that $\alpha_{i + N}/\alpha_i = \beta_{i+N}/ \beta_i = z$ for every $i \in \Z$).
\end{enumerate}
\end{proposition}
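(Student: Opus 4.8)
\emph{Proof proposal.} The plan is to realize the bijection through the linear recurrence satisfied by the vertices of a polygon: I would build explicit maps in both directions and then check that they descend to the quotients and are mutually inverse. For the direction from polygons to operators, start with a twisted $N$-gon $(p_i)$ with monodromy $\phi$ and choose a \emph{compatible lift}: vectors $\gamma_i \in \RR^m$ with $[\gamma_i] = p_i$ and $\gamma_{i+N} = \widetilde\Phi\gamma_i$ for one fixed $\widetilde\Phi \in \GL(m)$ projecting to $\phi$ (such a lift exists by choosing $\gamma_0, \dots, \gamma_{N-1}$ arbitrarily and propagating with a fixed linear lift of $\phi$). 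By non-degeneracy the vectors $\gamma_i, \dots, \gamma_{i+m-1}$ form a basis of $\RR^m$, so there is a unique monic recurrence $\gamma_{i+m} + a_i^{m-1}\gamma_{i+m-1} + \dots + a_i^0\gamma_i = 0$, and non-degeneracy at index $i+1$ forces $a_i^0 \neq 0$. Equivalently, the $m$ coordinate sequences of $(\gamma_i)$ lie in $\Ker D$ for $D := \sum_{j=0}^m a^j\T^j$ with $a^m \equiv 1$. Writing the recurrence at index $i+N$, applying $\widetilde\Phi^{-1}$ and using linear independence of $\gamma_i,\dots,\gamma_{i+m-1}$ gives $a^j_{i+N} = a^j_i$, so $D \in \PBDO{N}{m}$.

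Next I would check that this assignment descends to the claimed quotient. Any other compatible lift of the same $\phi$-representative has the form $(\lambda_i\gamma_i)$ with $\lambda$ an $N$-quasi-periodic scalar sequence, and this replaces $D$ by $\alpha D\beta^{-1}$ with $\beta_i = \lambda_i$ and $\alpha_i = \lambda_{i+m}$ --- a pair with the same monodromy $\lambda_{i+N}/\lambda_i$, matching \eqref{app:action}; conversely, any monic operator in the $\sim$-orbit of $D$ arises from such a lift change, since monicity of $\alpha D\beta^{-1}$ forces $\alpha_i = \beta_{i+m}$. Acting on $(p_i)$ by $\psi \in \PGL(m)$ and taking the compatible lift $(\Psi\gamma_i)$, where $\Psi$ lifts $\psi$, leaves the span of the coordinate sequences --- hence $D$ --- unchanged. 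Since every $D \in \PBDO{N}{m}$ is $\sim$-equivalent to the monic operator $(a^m)^{-1}D$ (here $(a^m)^{-1}$ is $N$-periodic, so this is a left-right action with monodromy $1$), these observations produce a well-defined map $W_m \to \PBDO{N}{m}/{\sim}$.

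For the inverse I would send $D \in \PBDO{N}{m}$ to the twisted polygon with vertices $[\gamma_i]$, where $\gamma_i := (\xi^{(1)}_i, \dots, \xi^{(m)}_i)$ for a basis $\xi^{(1)}, \dots, \xi^{(m)}$ of the $m$-dimensional space $\Ker D$. Because $a^0$ and $a^m$ are non-vanishing, a kernel element is determined by any $m$ consecutive of its values (propagate the recurrence forward and backward), so $\xi \mapsto (\xi_i, \dots, \xi_{i+m-1})$ is an isomorphism $\Ker D \to \RR^m$; evaluating it on the chosen basis shows $\gamma_i, \dots, \gamma_{i+m-1}$ is a basis of $\RR^m$, i.e.\ the polygon is non-degenerate. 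From $D\T^N = \T^N D$ the shift $\T^N$ preserves $\Ker D$, which gives $\gamma_{i+N} = M\gamma_i$ for a fixed $M \in \GL(m)$, so $([\gamma_i])$ is twisted with monodromy $[M]$. Changing the basis of $\Ker D$ alters $(\gamma_i)$ by a global linear map, and replacing $D$ by $\alpha D\beta^{-1}$ replaces $\Ker D$ by the rescaled kernel $\{\beta\xi:\xi\in\Ker D\}$; in both cases the projective equivalence class of the polygon is unchanged, so this too descends to a map $\PBDO{N}{m}/{\sim}\to W_m$.

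Finally I would verify the two maps are mutually inverse: the coordinate sequences of a compatible lift are linearly independent (by non-degeneracy) and lie in the $m$-dimensional $\Ker D$, hence span it, so passing from a polygon to $D$ and back returns the original lift; and starting from $D$, the monic operator annihilating $\Ker D$ is $(a^m)^{-1}D\sim D$. I expect the main obstacle to be the monodromy bookkeeping --- matching precisely the freedom of choosing a compatible lift together with a linear lift of $\phi$ against the ``same monodromy'' constraint on the pair $(\alpha,\beta)$ in \eqref{app:action} (including checking that $\alpha D\beta^{-1}$ remains $N$-periodic) --- together with the elementary but slightly fiddly equivalence between proper boundedness of $D$ and non-degeneracy of the polygon.
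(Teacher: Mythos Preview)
Your proposal is correct and follows exactly the approach the paper sketches: the map from operators to polygons via a basis of $\Ker\mathcal D$ is precisely what the paper indicates, and your detailed construction of the inverse via a compatible lift and its associated monic recurrence, together with the monodromy bookkeeping matching the rescaling freedom to the action~\eqref{app:action}, simply fills in the details the paper defers to~\cite{izosimov2019pentagram}. In short, you have written out the ``completely analogous'' general argument the paper alludes to.
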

\begin{proof}[Proof sketch]

The vertices of the polygon in $\P^{m-1}$ associated with a difference operator $\mathcal D \in \PBDO{N}{m}$ are given by
$v_i := (\xi^1_i : \ldots : \xi^m_i),$ where $\xi^1, \dots, \xi^m$ is any basis in the kernel of $\mathcal D$. Details of the proof for $m = 3$ can be found in \cite{izosimov2019pentagram}. The general case is completely analogous.
 \end{proof}
 \begin{remark}
 The above proposition is a version of a standard result on the relation between polygons and difference operators, see e.g. \cite[Proposition 4.1]{ovsienko2010pentagram}. A distinctive feature of our approach is the description of the moduli space of twisted polygons as a quotient, as opposed to a subspace, of the space of difference operators.
 \end{remark}
What we will do next is construct a Poisson structure on $N$-periodic order $m$ upper-triangular difference operators which is invariant under the action \eqref{app:action}. This will allow us to obtain a Poisson structure on $W_m$ by means of reduction. We call the space $W_m$, endowed with a so-obtained Poisson structure, a \textit{lattice $W_m$-algebra}. In fact, we will construct a Poisson structure which is defined on \textit{all} difference operators  (and on an even bigger space of \textit{pseudo-difference operators}) and then show that upper-triangular difference operators of fixed order form a Poisson submanifold. 

%

\subsection{Poisson geometry of pseudo-difference operators and $W_m$-algebras}
We define an $N$-periodic \textit{pseudo-difference operator} as an expression of the form
\begin{equation}\label{app:psido}
\sum_{i = -\infty}^k a^i \T^i,
\end{equation}
where $\T$, as above, is the left shift operator on bi-infinite sequences, while each $a^i \in \RR^\infty$ is a bi-infinite sequence. Such an expression can be regarded either as a formal sum, or as an actual operator acting on the space $\{\xi \in \RR^\infty \mid \exists\, j \in \Z : \xi_i = 0 \, \forall \, i < j\}$ of ``semi-infinite'' sequences.\par
We will denote the set of $N$-periodic pseudo-difference operators by $\PSIDO{N}$. This set is an associative algebra. Moreover, almost every pseudo-difference operator is invertible. A sufficient condition for \eqref{app:psido} to be invertible is $a^k_i \neq 0$ for all $i \in \Z$. 
We will denote the set of invertible $N$-periodic pseudo-difference operators by $\IPSIDO{N}$. This is a group with respect to multiplication. At least formally, one can regard it as an infinite-dimensional Lie group. We will not discuss here how to define endow $\IPSIDO{N}$ with a manifold structure. 

\begin{remark}\label{app:loopgroup}
Consider the algebra of general (not necessarily upper-triangular) $N$-periodic {difference operators}, i.e. finite sums of the form
$
\sum a^i \T^i,
$
while each $a^i \in \RR^\infty$ is an $N$-periodic sequence. This algebra is isomorphic to the algebra $\gl(N) \otimes \RR[z,z^{-1}]$ of $\gl(N)$-valued Laurent polynomials in one variable~$z$. Indeed, consider the natural action of $N$-periodic difference operators on the space $V_z$ of all $N$-quasi-periodic sequences with monodromy $z$, i.e. sequences $\xi \in \RR^\infty$ that satisfy $\xi_{i+N} = z\xi_i$. This gives a $1$-parametric family $\rho_z$ of $N$-dimensional representations of the algebra of difference operators.
In each of the spaces $V_z$, we take a basis $\xi^1, \dots, \xi^N$ determined by the condition $\xi^i_j= \delta^i_j$ for $i,j = 1, \dots, N$. Written in this basis, the representation $\rho_z$ takes an $N$-periodic sequence $a$ (viewed as an order zero difference operator) to a diagonal matrix with entries $a_1, \dots, a_N$, while the shift operator $\T$ becomes the matrix $\sum_{i=1}^{n-1} {E}_{i,i+1} + z{E}_{N,1}$.
Therefore, since the algebra of difference operators is generated by order zero operators, $\T$, and $\T^{-1}$, it follows that $\rho_z$ can be viewed as a homomorphism of difference operators into
 $ \mathfrak{gl}(N) \otimes \RR[z, z^{-1}]$. Furthermore, it is easy to verify that this homomorphism is a bijection, and hence an isomorphism. Moreover, it is easy to see that this identification naturally extends to an isomorphism between the algebra of $N$-periodic pseudo-difference operators, and the algebra of matrices over the field $$\RR[z, z^{-1}]] := \left\{\sum_{i =-\infty}^k \alpha_i z^i \mid \alpha_i \in \RR\right\}$$ of semi-infinite Laurent series. Under this isomorphism, the group $\IPSIDO{N}$ of invertible pseudo-difference operators is identified with the group of matrices over Laurent series with non-vanishing determinant (this group is one of the versions of the \textit{loop group} of $\GL(N)$). 
\end{remark}
\begin{proposition}\label{app:mainprop}
There exists a natural Poisson structure $\pi$ on the group $\IPSIDO{N}$ of $N$-periodic invertible pseudo-difference operators. This structure has the following properties:
\begin{enumerate}
\item[1.]  It is multiplicative, in the sense that the group multiplication is a Poisson map. In other words, the group $\IPSIDO{N}$, together with the structure $\pi$, is a \textit{Poisson-Lie group}.
\item[2.] The subset $\IDO{N}{k} := \IPSIDO{N} \cap \DO{N}{k}$ of order $k$ invertible upper-triangular difference operators is a Poisson submanifold.
\item[3.]The Poisson structure $\pi$ vanishes on the submanifold  $\IDO{N}{0}$ of invertible order zero operators.
\item[4.] The Poisson structure $\pi$ is invariant under an automorphism of $\IPSIDO{N}$ given by conjugation $\mathcal D \to \alpha \mathcal D \alpha^{-1}$ with quasi-periodic $\alpha \in \RR^\infty$.
\end{enumerate}
\end{proposition}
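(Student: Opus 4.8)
The plan is to construct $\pi$ via the Sklyanin bracket attached to an $r$-matrix on the algebra $\PSIDO{N}$ of pseudo-difference operators, exploiting the decomposition into upper-triangular difference operators and strictly-lower-triangular (integral) operators. First I would introduce the Adler-type trace functional on $\PSIDO{N}$: for $\mathcal A = \sum a^i \T^i$ set $\tr \mathcal A := \sum_{i=1}^N a^0_i$, i.e. the $N$-periodic sum of the coefficient of $\T^0$; one checks directly that $\tr(\mathcal A \mathcal B) = \tr(\mathcal B \mathcal A)$, so $\langle \mathcal A, \mathcal B\rangle := \tr(\mathcal A \mathcal B)$ is an invariant, nondegenerate, symmetric bilinear form. (Via Remark~\ref{app:loopgroup} this is just $\oint \mathrm{Tr}$ on matrices over Laurent series, so nondegeneracy and invariance are immediate.) The splitting $\PSIDO{N} = \DO{N}{} \oplus \PSIDO{N}_{<0}$ into upper-triangular difference operators and purely integral ones is a splitting into subalgebras, and it is isotropic-complementary for $\langle\,,\rangle$; hence $r := \tfrac12(P_{\geq 0} - P_{<0})$, the half-difference of the two projectors, is a skew solution of the modified classical Yang-Baxter equation. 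The associated right-invariant Sklyanin bivector $\pi_{\mathcal D} = (R_{\mathcal D})_* r(\mathcal D) - (L_{\mathcal D})_* r(\mathcal D)$, where we regard $r$ as a bi-invariant tensor, is the desired Poisson structure; that it is Poisson and multiplicative is the standard Semenov-Tian-Shansky theorem for a factorizable $r$-matrix, which gives item~1.

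For item~2, the key algebraic fact is that if $\mathcal D \in \DO{N}{k}$ has order exactly $k$ (invertible, so leading coefficient nonvanishing) and $\mathcal X \in \PSIDO{N}$, then both $(\mathcal D \mathcal X \mathcal D^{-1})_{\geq 0}\,\mathcal D$ and $\mathcal D\,(\mathcal D^{-1}\mathcal X \mathcal D)_{\geq 0}$ again lie in $\DO{N}{k}$ — order is preserved because conjugation by an order-$k$ invertible difference operator, followed by taking the nonnegative part and multiplying back by $\mathcal D$, cannot raise the order above $k$ and, since the top coefficient is a product of nonvanishing sequences, cannot lower it either. Concretely I would evaluate the Sklyanin bivector at $\mathcal D$ on an arbitrary covector (an element of $\PSIDO{N}$ paired via $\langle\,,\rangle$) and show the resulting Hamiltonian vector field is tangent to $\DO{N}{k}$, i.e. lies in $T_{\mathcal D}\DO{N}{k} = \{\sum_{i=0}^k b^i\T^i\}$; this is exactly where the $r$-matrix being built from the difference-operator/integral-operator splitting is essential, and it is the analog of the classical statement that the second AGD bracket restricts to differential operators of fixed order. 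Item~4 then follows because conjugation $\mathcal D \mapsto \alpha\mathcal D\alpha^{-1}$ by a quasi-periodic $\alpha$ is an algebra automorphism of $\PSIDO{N}$ that preserves the order filtration, preserves the splitting into nonnegative and negative parts (conjugation commutes with $\T$-degree), and preserves $\tr$ (hence $\langle\,,\rangle$); therefore it preserves $r$ and, being a group automorphism, carries the Sklyanin structure to itself.

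For item~3, I would note that on order-zero invertible operators $\alpha \in \RR^\infty$ the left and right translates of $r$ coincide up to the adjoint action, and more precisely: a function $f$ is an order-zero operator iff it commutes with every order-zero operator, and the coadjoint-type terms in $\pi_\alpha$ measure the failure of $\alpha$ to commute with $\DO{N}{0}$ and with $\PSIDO{N}_{<0}$; since $\DO{N}{0}$ is abelian and $\alpha$ is itself order zero, both $\mathrm{ad}_\alpha$-type contributions vanish. Equivalently, the symplectic leaves of a Sklyanin structure are the dressing orbits, and $\IDO{N}{0}$ consists of fixed points of the dressing action because the factorization problem $\alpha = \alpha_{\geq 0}\,\alpha_{<0}$ is trivial for an order-zero operator ($\alpha_{<0} = \mathrm{Id}$). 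I would phrase this via a short direct computation of $\pi$ on order-zero operators rather than invoking dressing orbits, to keep things self-contained.

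The main obstacle is item~2: the restriction property. It is not a formal consequence of the $r$-matrix machinery (for general $r$ the Sklyanin bracket on a group does not restrict to a subvariety), and the whole point of the paper's approach — reducing by the \emph{left-right} action rather than restricting to monic operators — is to make this restriction work. The heart of the matter is to show that the Hamiltonian vector field of any linear functional, evaluated at $\mathcal D \in \IDO{N}{k}$, has the form $\dot{\mathcal D} = \mathcal D\,A - B\,\mathcal D$ with $A = (\mathcal D^{-1}\mathcal X\mathcal D)_{\geq 0}$ and $B = (\mathcal X)_{\geq 0}$ for the appropriate $\mathcal X$, and then that $\mathcal D A$ and $B\mathcal D$ are each upper-triangular of order $\leq k$ — the order-$\le k$ bound needs the observation that $(\mathcal D^{-1}\mathcal X\mathcal D)_{\geq 0}$ has order $\le k$ whenever $\mathcal X$ is allowed to range only over the relevant space of covectors, which in turn uses that the \emph{negative} part of $\mathcal D^{-1}$ interacts correctly with the truncation. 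I expect to spend most of the proof making this bookkeeping precise, possibly after translating everything to the matrix-over-Laurent-series picture of Remark~\ref{app:loopgroup}, where ``order $\le k$'' becomes a concrete condition on the $z$-expansion and the classical results of \cite{oevel1997poisson, gekhtman2019periodic} on the trigonometric $r$-matrix can be cited.
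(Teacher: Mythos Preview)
There is a genuine gap in your construction of the $r$-matrix. You assert that the two-term splitting $\PSIDO{N} = \DO{N}{} \oplus \PSIDO{N}_{<0}$ is ``isotropic-complementary'' for the trace pairing, i.e.\ a Manin triple, and that therefore $r = \tfrac12(P_{\geq 0} - P_{<0})$ is skew. This is false: the subalgebra $\DO{N}{}$ of nonnegative-degree operators contains all order-zero operators, and two order-zero operators $a,b$ pair to $\tr(ab)=\sum_s a_s b_s\neq 0$ in general. A direct computation gives $r+r^* = p_0$ (the projector onto order-zero operators), so your $r$ has nontrivial symmetric part $\tfrac12 p_0$, and this symmetric part is \emph{not} $\mathrm{Ad}$-invariant (since $\g_0$ is not an ideal). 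Consequently the Sklyanin tensor built from your $r$ is not even antisymmetric, hence not a Poisson structure. The paper fixes exactly this point by using the \emph{three-way} splitting $\g_-\oplus\g_0\oplus\g_+$ of Example~\ref{app:gmt} and taking $r=\tfrac12(p_+-p_-)$, which \emph{is} skew because $\g_\pm$ are isotropic and $\g_0\perp\g_\pm$. (Equivalently: the skew part of your $r$ is precisely the paper's $r$, so you recover the correct structure once you antisymmetrize --- but you must notice that this is necessary.)

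Your plan for item~2 also has a problem independent of the above. The claim that $(\mathcal D^{-1}\mathcal X\mathcal D)_{\geq 0}$ has order $\le k$ is false for general covectors $\mathcal X$, since $\mathcal X$ ranges over all of $\PSIDO{N}$ and conjugation preserves order; your proposed bookkeeping via $\mathcal D^{-1}$ cannot be made to work as stated. The paper's argument is both simpler and avoids inverses altogether: in the formula $\pi_{\mathcal D}(\mathcal Q)=\mathcal D\,r(\mathcal Q\mathcal D)-r(\mathcal D\mathcal Q)\,\mathcal D$ one observes that the right-hand side is unchanged under $r\mapsto r\pm\tfrac12\mathrm{Id}$ (the $\mathrm{Id}$ contributions cancel). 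Taking $r+\tfrac12\mathrm{Id}=p_++\tfrac12 p_0$, whose image lies in $\g_{\geq 0}$, shows the result is upper-triangular; taking $r-\tfrac12\mathrm{Id}=-p_--\tfrac12 p_0$, whose image lies in $\g_{\leq 0}$, shows the result has degree $\le k$. This two-line trick replaces all of the order-tracking you anticipate. Your arguments for items~3 and~4 are essentially correct in spirit and match the paper's once the correct $r$ is in place.
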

\begin{remark}
As explained in Remark \ref{app:mainprop}, the group  $\IPSIDO{N}$ is isomorphic to a version of the loop group of $\GL(N)$. In the loop group language, the Poisson structure $\pi$ is well-known: it is the one associated with the so-called \textit{trigonometric $r$-matrix}. Here we will provide a construction of this Poisson structure which does not appeal to the loop group formalism. In fact, the language of (pseudo)difference operators seems to be more natural when dealing with the trigonometric $r$-matrix. \end{remark}
\begin{remark}
As is usually the case in infinite-dimensional Poisson geometry, the Poisson bracket $\pi$ on $\IPSIDO{N}$ is only defined for a certain relatively small subalgebra of functions. The good news is that $\pi$ is well-defined for coordinate functions, which guarantees that the restriction of $\pi$ to the finite-dimensional submanifold of fixed order upper-triangular difference operators is a genuine Poisson bracket (i.e. the bracket of any two smooth functions is well-defined).
\end{remark}
We will prove Proposition \ref{app:mainprop} in  Section \ref{app:mainproof}, after a brief general discussion of Poisson-Lie groups. What we will do now is use Proposition \ref{app:mainprop} to construct a Poisson bracket on the space $W_m$ of (equivalence classes of) twisted polygons. 
\begin{proposition}\label{app:tnpoisson} The Poisson structure $\pi$ from Proposition \ref{app:mainprop} induces a Poisson structure on the space $W_m$ of projective equivalence classes of twisted $N$-gons in $\P^{m-1}$.\end{proposition}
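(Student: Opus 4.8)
The plan is to obtain the Poisson structure on $W_m$ by reduction, using the properties of $\pi$ listed in Proposition \ref{app:mainprop} together with the identification in Proposition \ref{app:identification}. First I would recall that by Proposition \ref{app:identification}, $W_m$ is the quotient of $\PBDO{N}{m}$ by the left-right action \eqref{app:action}. Since properly bounded operators of order $m$ are in particular invertible, $\PBDO{N}{m}$ is an open subset of $\IDO{N}{m}$, which by part 2 of Proposition \ref{app:mainprop} is a Poisson submanifold of $\IPSIDO{N}$; hence $\PBDO{N}{m}$ carries the Poisson bracket $\pi$. So the task reduces to showing that this bracket descends to the quotient by \eqref{app:action}.

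The key point is that the left-right action \eqref{app:action} is generated by \emph{Poisson} maps. I would argue this in two steps. The action of a quasi-periodic factor $\alpha$ on the left, $\mathcal D \mapsto \alpha \mathcal D$, and on the right, $\mathcal D \mapsto \mathcal D \beta^{-1}$, are left and right translations in the group $\IPSIDO{N}$ by the order-zero operators $\alpha, \beta$. Since $\pi$ is multiplicative (part 1 of Proposition \ref{app:mainprop}), left and right translations by a fixed group element are Poisson maps precisely when that element is a point where the bivector degenerates appropriately; more to the point, translation by any group element is automatically a Poisson \emph{anti}morphism/morphism in a Poisson-Lie group only up to the value of $\pi$ at that element, so I would instead invoke part 3: $\pi$ vanishes on order-zero operators $\IDO{N}{0}$. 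For a multiplicative Poisson structure, left (or right) translation by an element $h$ with $\pi(h) = 0$ is a Poisson map — this follows by differentiating the multiplicativity identity $\pi(gh) = (L_g)_*\pi(h) + (R_h)_*\pi(g)$ and setting $\pi(h)=0$. Thus both one-sided translations by order-zero quasi-periodic sequences are Poisson automorphisms of $\IPSIDO{N}$ preserving $\PBDO{N}{m}$. Finally, the diagonal conjugation piece, when $\alpha = \beta$ is quasi-periodic, is covered directly by part 4 of Proposition \ref{app:mainprop}. Combining, every transformation \eqref{app:action} is a composition of Poisson maps, hence Poisson.

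Once the action is by Poisson diffeomorphisms, the standard Poisson reduction argument applies: if a group acts freely and properly on a Poisson manifold by Poisson maps, and moreover the action is \emph{tangent} to the bracket in the sense that the bracket of two invariant functions is again invariant, then the quotient inherits a unique Poisson structure making the projection a Poisson map. Invariance of $\{f,g\}$ for invariant $f,g$ follows because each group element acts by a Poisson map: $\sigma^*\{f,g\} = \{\sigma^* f, \sigma^* g\} = \{f,g\}$. So I would just check that the action of the group of pairs $(\alpha,\beta)$ of quasi-periodic sequences with common monodromy is free and proper on $\PBDO{N}{m}$ — freeness is immediate since $\alpha \mathcal D \beta^{-1} = \mathcal D$ forces $\alpha, \beta$ constant and then equal, and properness is a routine compactness check on the period cell — and conclude that $W_m = \PBDO{N}{m}/\!\sim$ carries the reduced bracket.

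The main obstacle I anticipate is the fussy point in the middle paragraph: making precise that one-sided translation by an element at which the Poisson bivector vanishes is genuinely a Poisson map, and that \eqref{app:action} really is generated by such translations plus the conjugation of part 4 (one must be careful that a general pair $(\alpha,\beta)$ with common monodromy $z$ decomposes as $\mathcal D \mapsto (\alpha\beta^{-1})\cdot(\beta \mathcal D \beta^{-1})$ with $\alpha\beta^{-1}$ genuinely periodic so that left translation by it is well-defined on $N$-periodic operators, and that part 4's conjugation is by the quasi-periodic $\beta$). A secondary, more technical nuisance is that $\pi$ is only defined on a restricted class of functions in the infinite-dimensional group, but since $\PBDO{N}{m}$ is finite-dimensional and, by the remark following Proposition \ref{app:mainprop}, $\pi$ restricts there to an honest Poisson bracket, the reduction argument can be carried out entirely on this finite-dimensional Poisson submanifold and no infinite-dimensional subtleties survive.
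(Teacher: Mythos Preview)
Your proposal is correct and follows essentially the same route as the paper: restrict $\pi$ to the open Poisson submanifold $\PBDO{N}{m}\subset\IDO{N}{m}$, then show the action \eqref{app:action} is by Poisson maps via the decomposition $\mathcal D\mapsto(\alpha\beta^{-1})\cdot(\beta\mathcal D\beta^{-1})$ with $\alpha\beta^{-1}$ periodic, invoking part~4 for the conjugation and parts~1 and~3 for the left multiplication by a periodic order-zero element. The ``obstacle'' you flag in your last paragraph is exactly the crux of the paper's argument, and you resolve it the same way; your direct use of the multiplicativity identity $\pi(gh)=(L_g)_*\pi(h)+(R_h)_*\pi(g)$ is equivalent to the paper's phrasing via the embedding $\mathcal D\mapsto(\alpha\beta^{-1},\mathcal D)$ composed with multiplication.
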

\begin{proof}
According to the second statement of Proposition \ref{app:mainprop}, the space $\IDO{N}{m}$ of invertible $N$-periodic order $m$ upper-triangular difference operators is a Poisson submanifold of $\IPSIDO{N}$. Furthermore, the space  $\PBDO{N}{m}$ of properly bounded $N$-periodic order $m$ upper-triangular difference operators is a subset of $\IDO{N}{m}$ (because non-vanishing of the leading term $a^m$ in \eqref{app:operator} guarantees invertibility), and moreover it is clearly an open (and dense) subset. So, since  $\PBDO{N}{m}$ is an open subset in a Poisson submanifold, it is also a  a Poisson submanifold.  Thus, to prove that the Poisson structure $\pi$ descends to $W_m$, it remains to show that $\pi$ is invariant under the action \eqref{app:action} (and apply Proposition \ref{app:identification}). To that end, we describe that action slightly differently. Namely, notice that since the sequences $\alpha$, $\beta$ in \eqref{app:action} have the same monodromy $z$, one can represent the  action \eqref{app:action} as conjugation by $\beta$ followed by left multiplication with a periodic sequence $\alpha \beta^{-1}$. Furthermore, conjugation by $\beta$ is a Poisson map by the fourth statement of Proposition \ref{app:mainprop}. So it remains to show that left action of the group $\IDO{N}{0}$ of periodic non-vanishing sequences is Poisson. To that end, observe that for any $\alpha \in \IDO{N}{0}$, its action on $\IPSIDO{N}$ is a composition of the embedding $\IPSIDO{N} \to \IPSIDO{N} \times \IPSIDO{N}$ given by $\mathcal D \mapsto (\alpha, \mathcal D)$ and the multiplication map $\IPSIDO{N} \times \IPSIDO{N} \to \IPSIDO{N}$. These two maps are Poisson by, respectively, the third and the first statement of Proposition \ref{app:mainprop}. So, the left action of $\IDO{N}{0}$ on $\IPSIDO{N}$, and hence on $\PBDO{N}{m}$, is Poisson as well. Therefore, the action \eqref{app:action} is Poisson, and the Poisson structure $\pi$ descends to the space $W_m$ of projective equivalence classes of twisted polygons.
\end{proof}

\begin{definition}
The space $W_m$ of projective equivalence classes of twisted $N$-gons in $\P^{m-1}$ endowed with a Poisson structure of Proposition \ref{app:tnpoisson} is called a \textit{lattice $W_m$-algebra}.
\end{definition}
\subsection{Poisson-Lie groups}
This section is a brief introduction to the theory of Poisson-Lie groups. Our terminology mainly follows that of \cite{reiman2003integrable}. \par
Recall that a Lie group $G$ endowed with a Poisson structure $\pi$ is called a \textit{Poisson-Lie group} if $\pi$ is \textit{multiplicative}, i.e. if the multiplication $G \times G \to G$ is a Poisson map. Assume that $G$ is a Poisson-Lie group, and let $\g$ be its Lie algebra. Then, using either left or right trivialization of the tangent bundle of $G$, one can identify the bivector field $\pi$ with a map $G \to \g \wedge \g$. Furthermore, one can show that multiplicativity of $\pi$ is equivalent to this map being a cocycle on $G$ with respect to the adjoint representation of $G$ on $ \g \wedge \g$. If, moreover, that cocycle is a coboundary, then $G$ is called a \textit{coboundary Poisson-Lie group}. A Poisson-Lie group $G$ is coboundary  if and only if there exists an element $r \in \g \wedge \g$, called the \textit{classical $r$-matrix}, such that the Poisson tensor $\pi$ at every point $g \in G$ is given by
\begin{equation}\label{app:cobound}
\pi_g = (\lambda_g)_*r - (\rho_g)_*r \quad \forall\, g \in G,
\end{equation}
where $\lambda_g$ and $\rho_g$ are, respectively, the left and right translations by $g$. 
Note that although the bivector~\eqref{app:cobound} is automatically multiplicative (since any coboundary is a cocycle), it does not need to satisfy the Jacobi identity. The necessary  and sufficient condition for \eqref{app:cobound} to satisfy the Jacobi identity is a rather complicated equation in terms of $r$, so it is usually replaced by simpler sufficient conditions, such as the \textit{classical Yang-Baxter equation}.  In our case, however,  the relevant condition is the \textit{modified Yang-Baxter equation}. We will formulate it under the assumption that the Lie algebra $\g$ is endowed with an $\mathrm{Ad}$-invariant inner product (i.e. an inner product which is invariant under the adjoint action of $G$ on $\g$), in which case one can identify the bivector $r \in \g \wedge \g$ with a skew-symmetric operator $ \g \to \g$. Then the modified Yang-Baxter equation for $r$ reads
\begin{equation}\label{app:mybe}
[rx, r y] - r[rx, y] - r[x, r y] = c [x,  y] \quad \forall\, x,y \in \g,
\end{equation}
where $c \in \RR$ is a constant not depending on $x, y$.
It is well-known that this equation implies (although is not necessary for) the Jacobi identity for~\eqref{app:cobound}. If the Lie algebra of a coboundary Poisson-Lie group $G$ is endowed with an invariant inner product, and the corresponding $r$-matrix satisfies the modified Yang-Baxter equation \eqref{app:mybe}, then $G$ is called \textit{factorizable}. 
\begin{example}
Let $\g$ be a Lie algebra endowed with an invariant inner product $(\,,)$. Assume also that $\g$, as a vector space, can be written as a direct sum of two subalgebras $\g_+$ and $\g_-$, both of which are isotropic with respect to the inner product  $(\,,)$. Then $(\g, \g_+, \g_-)$ is called a \textit{Manin triple}. Any Manin triple gives rise to a factorizable Poisson-Lie structure on the group $G$ associated with the algebra $\g$. The corresponding $r$-matrix $r \colon \g \to \g$, satisfying the modified Yang-Baxter equation~\eqref{app:mybe}, is given by $r:= \frac{1}{2}(p_+ - p_-)$, where $p_\pm$ are projectors $\g \to \g_\pm$.
\end{example}
\begin{example}\label{app:gmt}
This example is a generalization of the previous one. Let $\g$ be a Lie algebra endowed with an invariant inner product. Assume also that $\g$, as a vector space, can be written as a direct sum of three subalgebras $\g_+$, $\g_0$, and $\g_-$, such that $[\g_0, \g_\pm] = \g_\pm$, the subalgebras $\g_\pm$ are isotropic, and $\g_0$ is orthogonal to both $\g_+$ and $\g_-$.
 Then $r:= \frac{1}{2}(p_+ - p_-)$ satisfies the modified Yang-Baxter equation, thus turning the group $G$ of the Lie algebra $\g$ into a factorizable Poisson-Lie group. Later on we will use the term $r$-matrix for $r+\frac12 \mathrm{id}$, using the notation $\widehat r$ instead.

\end{example}
We will define a Poisson structure on the group $\IPSIDO{N}$ of $N$-periodic invertible pseudo-difference operators using the construction described in Example \ref{app:gmt}. But before we do that, let us notice that the group $\IPSIDO{N}$ can be embedded, as an open dense subset, into the associative algebra $\PSIDO{N}$ of all (not necessarily invertible) $N$-periodic pseudo-difference operators. This simplifies things a lot, so we would like to investigate this situation in more detail.
Assume that a Lie group $G$ is embedded, as an open subset, into an associative algebra $A$ (in a typical situation $G$ coincides with the group of invertible elements in $A$). Then the Lie algebra of $G$ (and, more generally, the tangent space to $G$ at any point) can be naturally identified with $A$. Assume also that $A$ is endowed with an invariant inner product, which in the context of associative algebras means that $(xy,z) = (x,yz)$ for any $x,y,z \in A$ (in particular, this inner product is invariant with respect to the adjoint action of $G \subset A$ on $A$). Furthermore, assume that $r \colon A \to A$ is a skew-symmetric operator satisfying the modified Yang-Baxter equation~\eqref{app:mybe}. Then $G$ carries a structure of a factorizable Poisson-Lie group. Identifying the cotangent space $T_g^*G$ with the tangent space $T_g G = A$ by means of the invariant inner product, one can then rewrite formula~\eqref{app:cobound} for the corresponding Poisson tensor on $G$ as
\begin{equation}\label{app:gd}
\pi_g(x,y) = (r(xg), yg) - (r(gx), gy) \quad \forall\, g \in G, x,y \in A.
\end{equation}
\begin{remark}
The reader might notice that the right-hand side of \eqref{app:gd} is actually defined for every $g \in A$ (invertibility of $g$ is not necessary). So, this formula may be used to define a Poisson bracket on the whole of $A$. This bracket is sometimes called the \textit{(second) Gelfand-Dickey bracket} on the associative algebra~$A$.
\end{remark}
Before we apply this construction to the group $\IPSIDO{N}$, let us mention the following general fact about coboundary Poisson-Lie groups, which we will need to prove Proposition \ref{app:mainproof}:
\begin{proposition}\label{app:autprop}
Let $\sigma \colon G \to G$ be an automorphism of a coboundary Poisson-Lie group. Assume that the differential of $\sigma$ at the identity preserves the $r$-matrix (viewed as an element of $\g \wedge \g$) . Then $\sigma$ is a Poisson map.
\end{proposition}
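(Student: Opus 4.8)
The plan is to work directly from the coboundary description of the Poisson tensor, $\pi_g = (\lambda_g)_* r - (\rho_g)_* r$, combined with the elementary fact that any group automorphism intertwines translations. Writing $\phi := d\sigma_e \in \mathrm{Aut}(\g)$, the hypothesis is that $\phi \wedge \phi$ fixes $r \in \g \wedge \g$. Recall that $\sigma$ being a Poisson map means $(d\sigma_g)(\pi_g) = \pi_{\sigma(g)}$ for every $g \in G$, where $d\sigma_g$ is extended to $\Lambda^2 T_g G$ in the functorial way. So the whole proof reduces to computing $(d\sigma_g)$ applied to each of the two translation-invariant terms.

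First I would record that $\sigma(gh) = \sigma(g)\sigma(h)$ gives $\sigma \circ \lambda_g = \lambda_{\sigma(g)} \circ \sigma$ and $\sigma \circ \rho_g = \rho_{\sigma(g)} \circ \sigma$. Differentiating the first identity at the identity yields
\[
d\sigma_g \circ (d\lambda_g)_e = (d\lambda_{\sigma(g)})_e \circ d\sigma_e = (d\lambda_{\sigma(g)})_e \circ \phi,
\]
and likewise for $\rho$. Since $(\lambda_g)_* r$ is by definition the image of $r \in \g \wedge \g = \Lambda^2 T_e G$ under $(d\lambda_g)_e$ extended to $\Lambda^2$, applying $d\sigma_g$ and using the displayed identity together with $\phi\wedge\phi$ fixing $r$ gives $(d\sigma_g)\big((\lambda_g)_* r\big) = (\lambda_{\sigma(g)})_* r$. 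The right-invariant term is handled verbatim with $\rho$ in place of $\lambda$. Subtracting, $(d\sigma_g)(\pi_g) = (\lambda_{\sigma(g)})_* r - (\rho_{\sigma(g)})_* r = \pi_{\sigma(g)}$, which is exactly the claim.

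I do not expect a real obstacle; the only thing requiring care is bookkeeping of conventions — whether $\pi$ is identified with a $\g\wedge\g$-valued cocycle via left or via right trivialization, and correspondingly what precisely ``$r$ viewed as an element of $\g \wedge \g$'' refers to at a given point. The coboundary formula is symmetric in $\lambda$ and $\rho$ and the hypothesis only constrains the value at $e$, so the computation above is convention-independent. It is worth adding a remark that when $\g$ carries an invariant inner product and one instead views $r$ as a skew operator $\g \to \g$, the condition ``$d\sigma_e$ preserves $r$'' becomes $d\sigma_e \circ r = r \circ d\sigma_e$, which is the same requirement on the $\Lambda^2$ side; in particular the shift by $\tfrac12\,\mathrm{id}$ used in Example \ref{app:gmt} to pass from $r$ to $\widehat r$ is automatically preserved by \emph{any} algebra automorphism, so it plays no role when this proposition is later applied to verify statement 4 of Proposition \ref{app:mainprop}.
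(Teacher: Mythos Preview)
Your argument is correct and essentially identical to the paper's: both use that an automorphism intertwines left and right translations (the paper phrases this as $\lambda_{\sigma(g)} = \sigma\lambda_g\sigma^{-1}$, you as $\sigma\circ\lambda_g = \lambda_{\sigma(g)}\circ\sigma$) and then invoke the hypothesis $(d\sigma_e)_*r = r$ to conclude $\sigma_*\pi_g = \pi_{\sigma(g)}$. Your closing remark about the operator picture and the irrelevance of the $\tfrac12\,\mathrm{id}$ shift is a helpful elaboration not present in the paper's proof.
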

\begin{proof}
Since $\sigma$ is an automorphism, we have
$
 \lambda_{\sigma(g)} = \sigma  \lambda_g \sigma^{-1}$ and   $\rho_{\sigma(g)} =  \sigma  \rho_g \sigma^{-1},
$
so 
$$
\pi_{\sigma(g)} = (\lambda_{\sigma(g)})_*r - (\rho_{\sigma(g)})_*r = \sigma_* (\lambda_g)_* (\sigma^{-1})_*r -  \sigma_* (\rho_g)_* (\sigma^{-1})_*r.
$$
Since $\sigma$ preserves the $r$-matrix, the latter expression can be rewritten as
$
 \sigma_* (\lambda_g)_* r -  \sigma_* (\rho_g)_*r = \sigma_*\pi_g.
$
So, $\pi_{\sigma(g)} = \sigma_*\pi_g$, which means that $\sigma$ is a Poisson map.
\end{proof}
\subsection{Construction of the Poisson bracket on pseudo-difference operators}\label{app:mainproof}
\begin{proof}[Proof of Proposition\ref{app:mainprop}]
To define a Poisson structure on the group $\IPSIDO{N}$ of $N$-periodic invertible pseudo-difference operators, we will use the construction described in Example \ref{app:gmt}. The Lie algebra of the group $\IPSIDO{N}$ is the space $\PSIDO{N}$ of all $N$-periodic  pseudo-difference operators. That is actually an associative algebra in which $\IPSIDO{N}$ is embedded as the set of invertible elements. So, after we describe an invariant inner product and an $r$-matrix, we will be able to use formula \eqref{app:gd}. The invariant inner product is given by
\begin{equation}\label{app:ip}
(\mathcal D_1, \mathcal D_2) = \Tr \mathcal D_1\mathcal D_2 \quad \forall \, \mathcal D_1, \mathcal D_2 \in \PSIDO{N},
\end{equation}
where the \textit{trace} of an $N$-periodic pseudo-difference operator $\mathcal D$ is defined by
$$
\Tr \left( \sum_{i = -\infty}^k a^i \T^i\right) := \sum_{j=1}^N a^{0}_j.
$$
This product is clearly non-degenerate and invariant in the associative algebra sense. Furthermore, one can verify explicitly that $ \Tr \mathcal D_1\mathcal D_2 =  \Tr \mathcal D_2\mathcal D_1$, so the inner product~\eqref{app:ip} is symmetric. Alternatively, this can be showed by using the isomorphism of $\PSIDO{N}$ and the loop algebra of $\gl(N)$ (see Remark \ref{app:loopgroup}). In the loop algebra language, the trace of an operator can be written as
$$
\Tr \mathcal D = \mathrm{res}_{z=0}z^{-1} \mathrm{tr} \, \mathcal D,
$$
where $\mathrm{tr}$ is the matrix trace.\par
Now, we represent $\PSIDO{N}$ as a sum of three subalgebras $\g_-$, $\g_0$, $\g_+$. Namely, we define
$$
\g_- := \left\{ \sum_{i = -\infty}^{-1} a^i \T^i\right\}$$ as the subalgebra of operators which only contain terms of negative power in $\T$, $\g_0   : =  \DO{N}{0}$ as the subalgebra of order zero difference operators (i.e. $N$-periodic bi-infinite sequences), and $$\g_+ := \left\{ \sum_{i = 1}^{k} a^i \T^i\right\}
$$ as the subalgebra of operators which only contain terms of positive power in $\T$. This decomposition clearly satisfies all the requirements of Example \ref{app:gmt}, so we get an $r$-matrix $r: = \frac{1}{2}(p_+ - p_-)$, and hence a factorizable Poisson-Lie structure on $\IPSIDO{N}$. This proves the first statement of Proposition \ref{app:mainprop}. To prove the second statement, we use formula~\eqref{app:gd}. From that formula it follows that, when viewed as map $\PSIDO{N} \to \PSIDO{N}$, the Poisson tensor $\pi_{\mathcal D}$ (where $\mathcal D \in \IPSIDO{N}$) reads
\begin{equation}\label{app:ho}
\pi_\mathcal D(\mathcal Q) = \mathcal D r(\mathcal Q\mathcal D) - r(\mathcal D \mathcal Q)\mathcal D.
\end{equation}
To show that the subset $\IDO{N}{m} \subset \IPSIDO{N}$ of order $m$ invertible upper-triangular difference operators is a Poisson submanifold, one needs to prove that for $\mathcal D \in \IDO{N}{m}$ the image of the Poisson tensor \eqref{app:ho} belongs to the tangent space to $\IDO{N}{m}$ at $\mathcal D$. The latter is the space $\DO{N}{m}$ of all $N$-periodic order $m$ upper-triangular difference operators, so we need to show that the right-hand side of \eqref{app:ho} is an order $m$ upper-triangular difference operator whenever $\mathcal D$ is an order $m$ upper-triangular difference operator. To that end, notice that the right-hand side of  \eqref{app:ho} stays the same if $r$ is replaced by $r \pm \frac{1}{2}\Id$. But the image of $r + \frac{1}{2} \Id =  p_+ + \frac{1}{2} p_0$  (where $p_0$ is the projector to $\g_0$) is contained in $\g_0 + \g_+$, so the right hand-side of~\eqref{app:ho} is in $\g_0 + \g_+$ as well. At the same time, the image of $r - \frac{1}{2}\Id = - p_- - \frac{1}{2} p_0$ is the space $\g_0 + \g_-$,  so the right hand-side of~\eqref{app:ho} cannot contain terms whose degree in $\T$ exceeds $m$. Thus, the right-hand side of~\eqref{app:ho} is indeed an upper-triangular difference operator of order at most $m$, as desired.\par
To prove the third statement of Proposition \ref{app:mainprop}, notice that for an order zero operator $\mathcal D$ one has
$
r(\mathcal Q\mathcal D) = r(\mathcal Q)\mathcal D$ and  $r(\mathcal D \mathcal Q) = \mathcal D r(\mathcal Q)$,
so from \eqref{app:ho}  we get that the Poisson tensor $\pi$ vanishes at $\mathcal D$. To prove the fourth statement, we need to show that conjugation by a quasi-periodic sequence preserves the $r$-matrix (see Proposition~\ref{app:autprop}). But this follows from preservation of $\g_\pm$ and $\g_0$ along with the inner product. 
So, Proposition~\ref{app:mainprop} is proved. 
\end{proof}
\subsection{$W_2$, Virasoro, and Volterra}\label{app:voltproof}

In this section we show that the lattice $W_2$-algebra as defined above coincides with the lattice Virasoro algebra of Faddeev-Takhtajan-Volkov \cite{volkov1988miura, faddeev2016liouville}. The corresponding Poisson structure also arises in integrable systems as the cubic Poisson bracket associated to the Volterra lattice~\cite{suris1999integrable}.

In order to compute the $W_2$ structure in coordinates, we use the geometric description of the space  $W_2$ as the space of projective equivalence classes of twisted polygons in $\RP^1$. Let $(p_i \in \RP^1)_{i \in \Z}$ be a twisted $N$-gon. Set
\begin{equation}\label{eq:crc}
x_i := [p_{i-1}, p_i, p_{i+1}, p_{i+2}],
\end{equation}
where the cross-ratio $[a,b,c,d]$ of four points $a,b,c,d \in \RP^1$ is defined by
$$
[a,b,c,d]:=\frac{(a-b)(c-d)}{(a-c)(b-d)}.
$$
Then the numbers $x_i$, subject to the periodicity condition $x_{i+N} = x_i$, define a coordinate chart on an open dense subset of $W_2$, see \cite{arnold2018cross}.

We will compute the $W_2$ structure in coordinates $x_i$. To that end, we first need to describe the restriction of the structure $\pi$ on $\IPSIDO{N}$ to the space of $n$-periodic second order upper-triangular difference operators. For notational simplicity, we will write such an operator as $\alpha + \beta \T + \gamma \T^2$. As coordinates on the space of such operators, one can take the entries $\alpha_i, \beta_i, \gamma_i$ of the $N$-periodic sequences $\alpha, \beta, \gamma$.
\begin{proposition} Assume that $N > 2$.
Then the restriction of the Poisson structure $\pi$ to the space of $n$-periodic second order upper-triangular difference operators is given by the following formulas:
\begin{align}\label{app:pbformulas}
\begin{gathered}
\{\alpha_i,\beta_{i-1}\} = -\frac{1}{2}\alpha_i \beta_{i-1}, \quad \{\alpha_i,\beta_i\} = \frac{1}{2}\alpha_i \beta_i,   \quad \{\alpha_i,\gamma_{i-2}\} = -\frac{1}{2}\alpha_i \gamma_{i-2},\\ \quad \{\alpha_i,\gamma_i\} = \frac{1}{2}\alpha_i \gamma_i, \quad
\{\beta_i, \beta_{i+1}\} = \alpha_{i+1}\gamma_i, \\ \{\beta_i, \gamma_{i-1}\} = -\frac{1}{2}\beta_i \gamma_{i-1}, \quad \{\beta_i, \gamma_i\} = \frac{1}{2}\beta_i\gamma_i,
\end{gathered}
\end{align}
while all other Poisson brackets vanish.
\end{proposition}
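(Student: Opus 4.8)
The plan is to compute the Poisson tensor directly from formula~\eqref{app:ho}, specialized to a second order upper-triangular operator $\mathcal D = \alpha + \beta\T + \gamma\T^2$. Concretely, for each pair of coordinate functions among $\alpha_i, \beta_i, \gamma_i$ I would choose the corresponding "test" pseudo-difference operators $\mathcal Q$ dual to those coordinates under the trace pairing~\eqref{app:ip}: since $(\mathcal D_1,\mathcal D_2) = \sum_{j=1}^N (\mathcal D_1 \mathcal D_2)^0_j$, the coordinate $\alpha_i$ is paired with the order-zero operator supported at site $i$, the coordinate $\beta_i$ with an operator of the form (delta at site)$\cdot\T^{-1}$, and $\gamma_i$ with (delta at site)$\cdot\T^{-2}$. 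Then $\{\,f_i, g_j\,\}(\mathcal D) = \pi_{\mathcal D}(\mathcal Q_{f_i},\mathcal Q_{g_j}) = \bigl(\mathcal D\, r(\mathcal Q_{f_i}\mathcal D) - r(\mathcal D\, \mathcal Q_{f_i})\mathcal D,\ \mathcal Q_{g_j}\bigr)$, and everything reduces to bookkeeping of which powers of $\T$ survive the projections $p_\pm$ and then extracting the order-zero term of the product.

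First I would set up notation for how $\T$ moves indices: $\T^k \circ a = (\T^k a)\circ \T^k$ where $(\T^k a)_i = a_{i+k}$, so that multiplying a sequence by $\T^k$ on the left shifts its index. With this, computing $\mathcal Q_{f_i}\mathcal D$ and $\mathcal D\,\mathcal Q_{f_i}$ for each of the three types of test operator is a short calculation, and applying $r = \tfrac12(p_+ - p_-)$ just means keeping the strictly-positive part minus the strictly-negative part with a factor $\tfrac12$ (the order-zero part is killed). Because $\mathcal D$ has only three terms $\T^0,\T^1,\T^2$ and each $\mathcal Q$ has spread at most two in $\T$, the products have very limited support, so only finitely many index-shifted entries of $\alpha,\beta,\gamma$ appear — this is what makes the bracket local and is why the hypothesis $N>2$ is needed (it prevents the shifted indices from wrapping around and colliding, e.g. $i$ and $i\pm 2$ being identified mod $N$). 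I would organize the computation as three blocks: brackets of $\alpha_i$ with everything, then $\beta_i$ with the rest, then $\gamma_i$ with $\gamma$, each block being one application of~\eqref{app:ho} followed by reading off the order-zero coefficient.

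A useful sanity check to build in along the way: the listed brackets should be consistent with the general properties from Proposition~\ref{app:mainprop}. For instance, left multiplication by a periodic order-zero sequence and conjugation by a quasi-periodic sequence are Poisson maps, which translates into the scaling covariance visible in the formulas (each bracket is a monomial of degree one in each of the two slots, with coefficients $\pm\tfrac12$ or $\pm 1$); and the structure must vanish on order-zero operators, consistent with the fact that no $\{\alpha_i,\alpha_j\}$ appears. I would verify that the answer is skew-symmetric and that the "all other brackets vanish" claim is genuinely exhaustive by noting that $\{\alpha_i,\cdot\}$, $\{\beta_i,\cdot\}$, $\{\gamma_i,\cdot\}$ each produce an operator with support confined to a window of size $\le 2$ around site $i$, so only the pairs of indices listed can give nonzero results.

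The main obstacle I expect is purely organizational rather than conceptual: getting all the index shifts and signs exactly right across the three blocks, since a single off-by-one in the action of $\T^k$ on a sequence propagates into wrong neighbors in the final formulas. The one genuinely substantive point to get correct is the identification of the dual test operators for $\beta_i$ and $\gamma_i$ — one must check that $\bigl((\text{delta at }i)\,\T^{-1},\ \beta\T\bigr)$ really extracts $\beta_i$ and not $\beta_{i\pm1}$ — after which the rest is a mechanical, if somewhat lengthy, application of~\eqref{app:ho}. I would present the $\alpha$–$\beta$ computation in full as a representative sample and indicate that the remaining cases are entirely analogous.
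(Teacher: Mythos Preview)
Your proposal is correct and is essentially the same approach as the paper's: identify the differentials of the coordinate functions as test operators (the paper writes $d\alpha_i=\delta_i$, $d\gamma_j=\T^{-2}\delta_j$, etc.), plug into the Poisson tensor formula, and extract the order-zero part of the resulting product, doing one representative bracket in full and declaring the rest analogous. The only minor difference is that the paper works with the inner-product form~\eqref{app:gd} and exploits the shortcut of replacing $r$ by $r-\tfrac12\mathrm{Id}$ (which leaves the bracket unchanged but collapses the projection to a single side), whereas you propose to track $p_+$ and $p_-$ separately via~\eqref{app:ho}; this affects only the bookkeeping, not the argument.
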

\begin{proof}
This is a straightforward computation. As an example, let us compute the bracket of $\alpha_i$ and $\gamma_j$. First, note that the differentials of these functions (identified with elements of the algebra $\PSIDO{N}$ by means of the invariant inner product) are given by $d\alpha_i = \delta_i$, $d\gamma_j = \T^{-2}\delta_j$, where $\delta_i$ is an $N$-periodic sequence whose terms are given by
\begin{align*}
\delta_{ij} := \left[\begin{aligned}
1\quad \mbox{ if } i = j \mod N,\\
0 \quad  \mbox{ if } i \neq j \mod N.
\end{aligned}\right.
\end{align*}
Then, using formula \eqref{app:gd}, we get the following formula for the bracket $\{\alpha_i, \gamma_j \} $ computed at $\mathcal D := \alpha + \beta \T + \gamma \T^2$:
\begin{gather*}
\{\alpha_i, \gamma_j \} = \Tr (r(\delta_i \mathcal D)\T^{-2}\delta_j \mathcal D - r(\mathcal D \delta_i)\mathcal D \T^{-2}\delta_j) \\ =  \Tr ((r - \frac{1}{2}\Id)(\delta_i \mathcal D)\T^{-2}\delta_j \mathcal D - (r - \frac{1}{2}\Id)(\mathcal D \delta_i)\mathcal D \T^{-2}\delta_j) 
\\= -\frac{1}{2}\Tr (\delta_i \alpha \T^{-2}\delta_j \mathcal D -  \delta_i \alpha\mathcal D \T^{-2}\delta_j)  = -\frac{1}{2}\Tr (\delta_i \alpha \T^{-2}\delta_j \gamma \T^2 -  \delta_i \alpha \gamma \delta_j)\\ =  -\frac{1}{2}\Tr (\delta_i \alpha \delta_{j+2} (\T^{-2}\gamma) -  \delta_i \alpha\gamma \delta_j) 
= \frac{1}{2}(-\delta_{i, j+2} \alpha_i \gamma_{i-2} + \delta_{ij}\alpha_i \gamma_i).
\end{gather*}
Assuming that $N > 2$, this gives   $\{\alpha_i,\gamma_{i-2}\} = -\frac{1}{2}\alpha_i \gamma_{i-2}$, $\{\alpha_i,\gamma_i\} = \frac{1}{2}\alpha_i \gamma_i$, while other Poisson brackets of the form $\{\alpha_i, \gamma_j \} $ vanish.
\end{proof}
We now describe the Poisson structure on the space $W_2$ of equivalence classes of twisted polygons. To that end, we obtain coordinate expressions for the projection $\PBDO{N}{2} \to W_2$. As coordinates on the space $\PBDO{N}{2} = \{\alpha + \beta \T + \gamma \T^2 \}$ we take the entries $\alpha_i, \beta_i, \gamma_i$ of the coefficients $\alpha, \beta, \gamma$, while as coordinates on $W_2$ we will use the cross-ratios $x_i$ defined by \eqref{eq:crc}. Note that the coordinates $x_i$ are only defined on a dense subset of $W_2$, so the projection $(\alpha_i, \beta_i, \gamma_i) \mapsto x_i$ will also be defined on an open dense subset. 
\begin{proposition}
The projection $\PBDO{N}{2} \to W_2$ is given by
\begin{equation}\label{app:projformulas}
x_i = \frac{\alpha_i \gamma_{i-1}}{\beta_{i-1}\beta_i}.
\end{equation}
\end{proposition}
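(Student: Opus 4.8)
The plan is to compute the cross-ratio $x_i = [p_{i-1}, p_i, p_{i+1}, p_{i+2}]$ directly in terms of a convenient basis of the kernel of $\mathcal D = \alpha + \beta \T + \gamma \T^2$. Since $\mathcal D$ has order $2$, its kernel is $2$-dimensional; choose a basis $\xi^1, \xi^2 \in \RR^\infty$, so that the vertex $p_i$ is the point $(\xi^1_i : \xi^2_i) \in \RP^1$. Identifying $\RP^1$ with $\RR \cup \{\infty\}$ via $(\eta^1 : \eta^2) \mapsto \eta^1/\eta^2$, one has $p_i \leftrightarrow t_i := \xi^1_i/\xi^2_i$, and the cross-ratio formula for four points becomes a ratio of the $2\times 2$ minors $w_{jk} := \xi^1_j \xi^2_k - \xi^1_k \xi^2_j$ (the discrete Wronskians). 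Explicitly, substituting $a - b = t_a - t_b = w_{ab}/(\xi^2_a \xi^2_b)$ into $[a,b,c,d] = \frac{(a-b)(c-d)}{(a-c)(b-d)}$, all the $\xi^2$ denominators cancel and we get
\[
x_i = \frac{w_{i-1,i}\, w_{i+1,i+2}}{w_{i-1,i+1}\, w_{i,i+2}}.
\]

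The next step is to express each minor $w_{jk}$ appearing here in terms of $\alpha, \beta, \gamma$. The key input is the recursion coming from $\mathcal D \xi = 0$, i.e. $\alpha_j \xi_j + \beta_j \xi_{j+1} + \gamma_j \xi_{j+2} = 0$ for each kernel element, which allows one to rewrite $\xi_{j+2}$ in terms of $\xi_j$ and $\xi_{j+1}$. From this recursion one derives a transfer relation for consecutive minors: using $\xi^k_{j+2} = -(\alpha_j \xi^k_j + \beta_j \xi^k_{j+1})/\gamma_j$ for $k = 1,2$, a short computation gives $w_{j+1,j+2} = (\alpha_j/\gamma_j)\, w_{j,j+1}$, hence the "nearest-neighbor" minors $w_{j,j+1}$ satisfy $w_{j,j+1} = (\alpha_{j-1}/\gamma_{j-1}) w_{j-1,j}$, and more generally one can write $w_{i-1,i+1}$ and $w_{i,i+2}$ as combinations of $w_{i-1,i}$ and $w_{i,i+1}$ with coefficients built from $\alpha,\beta,\gamma$. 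Carrying this out, the minors $w_{i-1,i+1} = -(\beta_{i-1}/\gamma_{i-1}) w_{i-1,i}$ (again from the same recursion: $w_{i-1,i+1} = \xi^1_{i-1}\xi^2_{i+1} - \xi^1_{i+1}\xi^2_{i-1}$, and substituting the recursion for the index-$(i+1)$ entries with $j = i-1$ kills the $\alpha$-term against itself), and similarly $w_{i,i+2} = -(\beta_i/\gamma_i) w_{i,i+1}$. Plugging these four expressions into the displayed formula for $x_i$, the factors $w_{i-1,i}$ and $w_{i,i+1}$ and most of the $\gamma$'s cancel, and using $w_{i+1,i+2} = (\alpha_i/\gamma_i) w_{i,i+1}$ together with $w_{i,i+1} = (\alpha_{i-1}/\gamma_{i-1}) w_{i-1,i}$ one is left precisely with $x_i = \dfrac{\alpha_i \gamma_{i-1}}{\beta_{i-1}\beta_i}$, which is \eqref{app:projformulas}.

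Finally, one must check that this identity is independent of the choice of kernel basis $\xi^1, \xi^2$ (it visibly is, since a change of basis multiplies every minor $w_{jk}$ by the same determinant, which cancels in the cross-ratio) and that it is consistent with the left-right action \eqref{app:action}: replacing $\mathcal D$ by $\alpha \mathcal D \beta^{-1}$ rescales the coefficient sequences and the kernel, but the cross-ratio is a projective invariant, so $x_i$ is unchanged — one can verify directly that the right-hand side $\alpha_i\gamma_{i-1}/(\beta_{i-1}\beta_i)$ is invariant under the coordinate rescalings induced by \eqref{app:action}. The main obstacle is purely bookkeeping: correctly tracking the minor recursions and the index shifts so that the cancellations in the cross-ratio come out cleanly; there is no conceptual difficulty, but a sign or shift error anywhere propagates, so the computation of $w_{i-1,i+1}$ and $w_{i,i+2}$ via the $\mathcal D\xi = 0$ recursion should be done with care.
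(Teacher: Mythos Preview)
Your proof is correct and is essentially the same as the paper's: your minors $w_{jk}$ are precisely the determinants $\det(V_j,V_k)$ of the lifts $V_i=(\xi^1_i,\xi^2_i)$, and both arguments use the recursion $\alpha_i V_i+\beta_i V_{i+1}+\gamma_i V_{i+2}=0$ to reduce the cross-ratio of determinants to $\alpha_i\gamma_{i-1}/(\beta_{i-1}\beta_i)$. One small remark: the relation $w_{i,i+1}=(\alpha_{i-1}/\gamma_{i-1})w_{i-1,i}$ that you mention at the end is actually unnecessary, since $w_{i-1,i}$ and $w_{i,i+1}$ already cancel directly between numerator and denominator.
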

\begin{proof}
By construction of the polygon $(p_i \in \RP^1)$ associated with a difference operator $\alpha + \beta \T + \gamma \T^2$ (see the proof of Proposition \ref{app:identification}), there exists a sequence $V$ of vectors $V_i \in \RR^2$ such that $V_i$ is the lift of $p_i$, and $V_i$'s satisfy the difference equation
\begin{equation}\label{app:de}
\alpha_i V_i + \beta_i V_{i+1} + \gamma_i V_{i+2} = 0.
\end{equation}
Then, since $V_i$'s are lifts of $p_i$'s, we have
$$
x_i = \frac{(p_{i-1} - p_i)(p_{i+1} - p_{i+2})}{(p_{i-1} - p_{i+1})(p_{i} - p_{i+2})} = \frac{\det(V_{i-1}, V_i)\det(V_{i+1}, V_{i+2})}{\det(V_{i-1}, V_{i+1})\det(V_{i}, V_{i+2})}. 
$$
Expressing $V_{i+2}$ from \eqref{app:de}, this can be rewritten as
\begin{align*}
x_i &= \frac{\det(V_{i-1}, V_i)\det(V_{i+1}, -\gamma_i^{-1}(\alpha_i V_i + \beta_i V_{i+1}) )}{\det(V_{i-1}, V_{i+1})\det(V_{i}, -\gamma_i^{-1}(\alpha_i V_i + \beta_i V_{i+1}) )} =  -\frac{\alpha_i \det(V_{i-1}, V_i)}{\beta_i\det(V_{i-1}, V_{i+1})} \\ &\qquad\quad= -\frac{\alpha_i \det(V_{i-1}, V_i)}{\beta_i\det(V_{i-1}, -\gamma_{i-1}^{-1}(\alpha_{i-1} V_{i-1} + \beta_{i-1} V_{i}))} = \frac{\alpha_i \gamma_{i-1}}{\beta_{i-1}\beta_i},
\end{align*}
q.e.d.
\end{proof}
\begin{proposition}\label{prop:w2vol}
Assume that $N > 2$. Then, in cross-ratio coordinates $x_i$, the $W_2$ structure is given by
\begin{equation}\label{eq:w2vol}
\{x_i, x_{i+1}\} = x_i x_{i+1}(x_i + x_{i+1} - 1), \quad
\{x_i, x_{i+2}\} = x_i x_{i+1}x_{i+2}.
\end{equation}
\end{proposition}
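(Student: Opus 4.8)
The plan is to use that the projection $\PBDO{N}{2}\to W_2$ is a Poisson map (this is exactly what the proof of Proposition~\ref{app:tnpoisson} establishes), so that $\{x_i,x_j\}$ on $W_2$ is computed by pulling back $x_i$ to the function $\alpha_i\gamma_{i-1}\beta_{i-1}^{-1}\beta_i^{-1}$ of \eqref{app:projformulas} and applying the Leibniz rule to the brackets \eqref{app:pbformulas}. Since each pulled-back $x_i$ is a Laurent monomial in the coordinates $\alpha_k,\beta_k,\gamma_k$, I would record for every coordinate $a$ its exponent $n^{(i)}_a$ in $x_i$ (so $n^{(i)}_{\alpha_i}=n^{(i)}_{\gamma_{i-1}}=1$, $n^{(i)}_{\beta_{i-1}}=n^{(i)}_{\beta_i}=-1$, and $n^{(i)}_a=0$ otherwise) and use the identity
\[
\{x_i,x_j\}=x_i x_j\sum_{a,b}\frac{n^{(i)}_a\,n^{(j)}_b}{ab}\,\{a,b\},
\]
the sum running over coordinates $a$ occurring in $x_i$ and $b$ occurring in $x_j$. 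The whole proof then reduces to identifying, for each relevant $j$, which pairs $(a,b)$ contribute.

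First I would treat $j=i+1$. Here $x_i$ and $x_{i+1}$ share only the coordinate $\beta_i$, and one scans the pairs $(a,b)$ with $a\in\{\alpha_i,\gamma_{i-1},\beta_{i-1},\beta_i\}$ and $b\in\{\alpha_{i+1},\gamma_i,\beta_i,\beta_{i+1}\}$ that have a nonzero bracket in \eqref{app:pbformulas}. Six of these are of ``multiplicative'' type $\{a,b\}=\pm\tfrac12 ab$, and each contributes $\pm\tfrac12$ to the sum; they add up to $-1$. The remaining two come from the genuinely non-logarithmic brackets $\{\beta_{i-1},\beta_i\}=\alpha_i\gamma_{i-1}$ and $\{\beta_i,\beta_{i+1}\}=\alpha_{i+1}\gamma_i$: by \eqref{app:projformulas} one recognizes $\alpha_i\gamma_{i-1}/(\beta_{i-1}\beta_i)=x_i$ and $\alpha_{i+1}\gamma_i/(\beta_i\beta_{i+1})=x_{i+1}$, so these two contribute $x_i$ and $x_{i+1}$ respectively. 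Hence the bracketed sum is $x_i+x_{i+1}-1$, giving the first formula of \eqref{eq:w2vol}.

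Next I would do $j=i+2$. Now $x_i$ and $x_{i+2}$ have no coordinate in common, and the only pair $(a,b)$, with $a$ a coordinate of $x_i$ and $b$ a coordinate of $x_{i+2}$, admitting a nonzero bracket is $(\beta_i,\beta_{i+1})$, with $\{\beta_i,\beta_{i+1}\}=\alpha_{i+1}\gamma_i$; its contribution is again $\alpha_{i+1}\gamma_i/(\beta_i\beta_{i+1})=x_{i+1}$, so $\{x_i,x_{i+2}\}=x_i x_{i+1} x_{i+2}$. Finally, for $|i-j|\ge 3$ the coordinates occurring in $x_i$ carry indices in $\{i-1,i\}$ and those in $x_j$ carry indices in $\{j-1,j\}$, and a direct check of the finitely many index-distance-$2$ cases shows that none of the brackets in \eqref{app:pbformulas} connects such a pair, so $\{x_i,x_j\}=0$; this yields the proposition.

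The computation is mechanical, and the only real obstacle is bookkeeping: correctly enumerating which pairs of coordinates have nonzero brackets and tracking all signs and exponents, together with the one structural observation that produces the quadratic terms, namely that the non-multiplicative bracket $\{\beta_k,\beta_{k+1}\}=\alpha_{k+1}\gamma_k$ equals $\beta_k\beta_{k+1}x_{k+1}$. I would also point out that the hypothesis $N>2$ — in fact one wants $N$ a little larger so that $i-1,\dots,i+2$ are genuinely distinct modulo $N$ — is what prevents ``collision'' contributions, analogous to those appearing in the proof of the preceding proposition, from distorting the formulas.
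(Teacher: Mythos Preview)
Your approach is correct and essentially the same as the paper's: the paper also pulls back $x_i$ via \eqref{app:projformulas} and computes $\{\log x_i,\log x_{i+1}\}$ using \eqref{app:pbformulas}, which is precisely your Laurent-monomial/Leibniz formula. Your write-up is in fact more complete, since the paper only exhibits the $j=i+1$ computation and leaves the remaining cases implicit, whereas you also handle $j=i+2$ and $|i-j|\ge 3$ and flag the small-$N$ collision issue.
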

\begin{remark}
Up to sign, bracket \eqref{eq:w2vol} coincides with the cubic Poisson structure for the Volterra integrable system, see \cite[Section 10.1]{suris1999integrable}. Furthermore, rewritten in terms of the variables $s_i := 4x_i$, our bracket coincides (up to a constant factor) with the lattice Virasoro structure of \cite{volkov1988miura, faddeev2016liouville}.
\end{remark}

\begin{proof}[Proof of Proposition \ref{prop:w2vol}]
The proof is achieved by computing the Poisson brackets of $x_i$'s using their expressions~\eqref{app:projformulas} in terms of $\alpha_i, \beta_i, \gamma_i$, along with formulas \eqref{app:pbformulas} for pairwise Poisson brackets of $\alpha_i, \beta_i, \gamma_i$.  For example,
\begin{gather*}
\{ \log x_i, \log x_{i+1}\} = \{ \log \alpha_i  + \log \gamma_{i-1} - \log \beta_{i-1} - \log \beta_i,  \log \alpha_{i+1}  + \log \gamma_{i} - \log \beta_{i} - \log \beta_{i+1}\}
\\ = \{ \log \alpha_i  , \log \gamma_i \} -  \{ \log \alpha_i  , \log \beta_i \} + \{\log \gamma_{i-1},  \log \alpha_{i+1}\} - \{\log \gamma_{i-1},  \log \beta_{i}\} \\ +\, \{ \log \beta_{i-1}, \log \beta_i \}  - \{\log \beta_{i}, \log \alpha_{i+1}\} - \{\log \beta_{i}, \log \gamma_{i}\} + \{ \log \beta_{i}, \log \beta_{i+1} \} \\
= \frac{1}{2} - \frac{1}{2} + \frac{1}{2} - \frac{1}{2} + \,\frac{\alpha_i \gamma_{i-1}}{\beta_{i-1}\beta_i} - \frac{1}{2} - \frac{1}{2} + \,\frac{\alpha_{i+1} \gamma_{i}}{\beta_{i}\beta_{i+1}} = x_i + x_{i+1} - 1,
\end{gather*}
which implies the desired formula for $\{x_i, x_{i+1}\}$.
\end{proof}
\section{A discretization of the geometric Drinfeld-Sokolov reduction}
In \cite{beffa2013hamiltonian} the authors constructed what can be considered as a discretization of a reduction of Drinfeld-Sokolov type in the $G = \SL(m)$ case. The main Poisson bracket, known as the {\it twisted Poisson bracket},  was constructed by Semenov-Tian-Shansky in~\cite{semenov85} using the Poisson-Lie group theory described in our previous chapter. We refer the reader to the original paper for details of this construction, while we will give its definition below. If $\RP^{m-1} = \PSL(m)/H$ is the standard homogeneous representation of the projective space, the authors of \cite{beffa2013hamiltonian} showed that the twisted Poisson bracket, originally defined in $\SL(m)^N$ can be reduced to a quotient $\SL(m)^N/H^N$, with $H^N$ acting on $\SL(m)^N$ by (right) gauges. In this section we will show that this reduction also goes through when $G = \GL(m)$, for a properly chosen subgroup~$H$.
\subsection{Poisson reduction in the $\GL(m)$ case}
Consider $G = \GL(m)$, $\g = \gl(m)$, and endow $\g^N$ with the invariant inner product
\[\begin{matrix}
\g^N \times \g^N &\to& \RR\\
(\mu, \eta) &\mapsto& \langle \mu, \eta\rangle = \sum_{i=1}^N \tr(\mu_i\eta_i).
\end{matrix}
\]
Given $\F: G^N \to \RR$, we define the left and right gradients at $A\in G^N$ as $\nabla\F(A), \nabla'\F(A)\in \g^N$, given by the left and right translation of the differential, considered as an element of the cotangent, and identifying $\g^\ast$ with $\g$ using the invariant inner product. (In the previous section's language, they will play the role of $\mathcal{D}\mathcal{Q} $ and $\mathcal{Q} \mathcal{D}$ in \eqref{app:ho}.) That is
\[
\left.\frac{d}{d\epsilon}\right|_{\epsilon = 0}\F(e^{\epsilon \xi} A)= \langle\nabla\F(A) , \xi\rangle; \quad  \left.\frac{d}{d\epsilon}\right|_{\epsilon = 0}\F(Ae^{\epsilon \xi})= \langle\nabla'\F(A) , \xi\rangle.
\]

Let  
\[
\r = \sum_{i>j} E_{ij}\otimes E_{ji} + \frac 12\sum_r E_{rr}\otimes E_{rr}
\]
be the standard $r$-matrix for $G=\GL(m)$. Notice that $\r = r+\frac 12 \mathrm{Id}$ as defined in the previous section. 

Given $\F, \H$ smooth scalar-valued functions on $G^N$ and $A\in G^N,$ the \emph{twisted Poisson bracket} is given by \cite{frenkel1998drinfeld}:
\begin{equation}\label{twisted}
\begin{split}
\{\F, \H\}(A) & := \sum_{s=1}^N\r(\nabla_s \F \wedge \nabla_s \H) + \sum_{s=1}^N \r(\nabla_s' \F \wedge \nabla_s' \H)  \\ &- \sum_{s=1}^N \r\left( (\T\otimes 1)(\nabla_s' \F \otimes \nabla_s \H)\right) + \sum_{s=1}^N \r  \left((\T\otimes 1) (\nabla_s' \H \otimes \nabla_s \F)\right), 
\end{split}
\end{equation}
where $\xi\wedge \eta=\tfrac12( \xi\otimes\eta - \eta\otimes\xi)$ and $\T$ is the cyclic shift on $\g^N$.
Formula~\eqref{twisted} defines a Hamiltonian structure on $G^{N}$, as shown by Semenov-Tian-Shansky in \cite{semenov85}.  Moreover, the {\em right gauge action} of $G^{N}$ on itself 
\begin{equation}\label{gauge}
(g, A) \mapsto (\T g A g^{-1})
\end{equation}
is a Poisson map and its orbits coincide with the symplectic leaves \cite{frenkel1998drinfeld, semenov85}.

Let $H\subset \GL(m)$ be the subgroup defined by 
\[
H = \{ A\in G, ~~A e_1 =  e_1 \}.
\]

\begin{theorem}\label{reducedbracket}
The Poisson bracket (\ref{twisted}) reduces to the quotient $G^N/H^N$, where $H^N$ is acting on $G^N$ via the right gauge action. Note that $G^N/H^N$ is a manifold only at generic points. Here and in what follows by Poisson reduction to $G^N/H^N$ we mean reduction of an open dense subset of $G^N$, so that the quotient is smooth.
\end{theorem}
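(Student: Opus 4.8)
The plan is to verify the standard criterion for Poisson reducibility: if a Lie group $K$ acts on a Poisson manifold $(M,\pi)$ by Poisson maps with the quotient $M/K$ smooth at generic points, then $\pi$ descends to $M/K$ provided the symplectic leaves intersect the $K$-orbits cleanly and, more to the point, provided that the ideal of $K$-invariant functions is closed under the bracket when restricted to functions constant along orbits — equivalently, the Hamiltonian vector field of any $H^N$-invariant function is tangent to the $H^N$-orbits. So the first step is to recall from \cite{semenov85, frenkel1998drinfeld} that \eqref{gauge}, the right gauge action of the \emph{full} group $G^N$, is Poisson, hence so is its restriction to the subgroup $H^N$. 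The second step is to identify, for $\F$ an $H^N$-invariant function, the constraint that invariance imposes on its left and right gradients $\nabla_s\F,\nabla_s'\F$: differentiating the invariance relation $\F(\T g\, A\, g^{-1}) = \F(A)$ along $g_i = \exp(\epsilon\,\eta_i)$ with $\eta_i$ in the Lie algebra $\h$ of $H$ (i.e. $\eta_i e_1 = 0$, so $\eta_i$ has vanishing first column) yields a linear relation of the form $\nabla_s'\F - \T^{-1}\!\big(\,\cdot\,\big)\perp \h$ for every $s$; spelling this out shows $\nabla_s'\F$ and the shifted $\nabla_s\F$ differ by something orthogonal to $\h$, i.e. by a matrix whose only nonzero entries lie in the first column.

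The third step is the heart of the matter: plug two such $H^N$-invariant functions $\F,\H$ into the twisted bracket \eqref{twisted} and show the result depends only on the $H^N$-orbits of $A$, i.e. is again $H^N$-invariant. This is where one uses the explicit form of the $\GL(m)$ $r$-matrix $\r = \sum_{i>j}E_{ij}\otimes E_{ji} + \tfrac12\sum_r E_{rr}\otimes E_{rr}$ together with the gradient constraints from step two. The key algebraic fact to establish is that the ``gauge-direction'' part of the gradients — the ambiguity living in the first column — drops out of \eqref{twisted}: each of the four terms is a contraction of $\r$ against a wedge or a shifted tensor product of gradients, and one checks that $\r$ annihilates (or produces something that cancels between the terms) any contribution coming from a matrix supported in its first column, because $E_{i1}$ for $i>1$ pairs under $\r$ only with $E_{1i}$, and the corresponding component of the other gradient is forced to vanish by the dual constraint. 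Concretely, I would choose a slice: near a generic point, use the gauge freedom to bring $A$ into a normal form analogous to \eqref{can} (a companion-type / Frenet normal form with all gauge degrees of freedom fixed), restrict \eqref{twisted} to functions of the slice coordinates, and check directly that the bracket of two slice functions is again a function of the slice — this is the computation already done implicitly in \cite{beffa2013hamiltonian} for $G=\SL(m)$, and the point here is simply that replacing $\SL(m),\N$ by $\GL(m),H$ does not break it, since $H$ still satisfies $H e_1 = e_1$ and $\h$ is still complementary to the relevant nilpotent directions.

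The fourth and final step is bookkeeping: having shown the bracket of invariants is invariant, invoke the general reduction principle (e.g. \cite{reiman2003integrable}) to conclude that $\{\,\cdot\,,\,\cdot\,\}$ descends to a well-defined Poisson bracket on the smooth locus of $G^N/H^N$, and note that the Jacobi identity is inherited automatically from that of the twisted bracket upstairs. I expect the main obstacle to be step three — specifically, tracking how the inhomogeneous ``shift'' term $(\T\otimes 1)(\nabla_s'\F\otimes\nabla_s\H)$ interacts with the first-column ambiguity in the gradients, since the shift mixes sites and one must be careful that the cancellation is exact and does not merely hold modulo the constraint at a single site. A clean way to sidestep the index gymnastics is to phrase everything in terms of the difference operator / loop-group picture of Remark \ref{app:loopgroup}, where $H^N$-invariance becomes right-invariance under lower-triangular-type transformations of the associated operator and the cancellation is the same one that makes $\PBDO{N}{m}$ a Poisson submanifold in Proposition \ref{app:mainprop}; but carrying this out faithfully in the $\GL(m)^N$ language is precisely the part that needs care.
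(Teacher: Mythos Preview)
Your approach differs substantially from the paper's, and step 1 contains a genuine gap that you should be aware of.

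The paper does \emph{not} proceed by showing that the bracket of two $H^N$-invariant functions is again $H^N$-invariant. Instead it invokes the coisotropic-subgroup criterion: since the full $G^N$-gauge orbits coincide with the symplectic leaves of the twisted bracket, reduction by the subgroup $H^N$ goes through precisely when the annihilator $(\h^N)^0$ is a Lie subalgebra of $(\g^N)^*$ for the dual bracket $[d_e\phi,d_e\varphi]_* := d_e\{\phi,\varphi\}$. The proof then constructs explicit local functions $\phi_r^i,\phi_r$ near the identity (coming from a factorization of $A_r$) whose differentials $E_{1,i+1}$ and $E_{1,1}$ span $\h^0$, computes their left and right gradients \emph{at points of $H$}, and checks directly that $\{\phi^i,\phi^j\}(A)=\{\phi,\phi^i\}(A)=0$ for $A\in H$. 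This is a finite, linearized computation at a single subgroup, which is why it is short; the only new work beyond \cite{beffa2013hamiltonian} is handling the extra generator $\phi_r$ (with $d_e\phi_r=E_{1,1}$) coming from the determinant direction in $\GL(m)$ versus $\SL(m)$.

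Your step 1 asserts that because the gauge action of the full $G^N$ is Poisson, ``so is its restriction to the subgroup $H^N$.'' This is the gap: the gauge action is a \emph{Poisson-Lie} action (the action map $G^N\times G^N\to G^N$ is Poisson with respect to a nontrivial Poisson structure on the acting factor), not an action by Poisson diffeomorphisms. Individual elements of $G^N$ do not in general preserve the twisted bracket, and a Poisson-Lie action does not restrict to a Poisson-Lie action of an arbitrary subgroup --- one needs precisely the coisotropy condition the paper verifies. Relatedly, your parenthetical ``the Hamiltonian vector field of any $H^N$-invariant function is tangent to the $H^N$-orbits'' is not the right condition (that would force all invariants to be Casimirs); you want projectability, not tangency.

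Your step 3, taken on its own, is a legitimate alternative route: one can in principle take two arbitrary $H^N$-invariant functions, use the gradient constraint (which is exactly the paper's later equation \eqref{h0}), and grind through \eqref{twisted} to see that the first-column ambiguities cancel. But this is considerably more laborious than the paper's linearized check, and note that \cite{beffa2013hamiltonian} does \emph{not} do the slice computation you attribute to it --- it uses the same coisotropic-subalgebra argument as here, just without the extra $E_{1,1}$ generator.
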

\begin{proof}
The proof is very similar to that in \cite{beffa2013hamiltonian} where the same theorem was proved for the case $G = \SL(m)$. Since the orbits of the action coincide with the symplectic leaves, the bracket will reduce whenever  $(\h^N)^0$ is a Lie subalgebra of $(\g^N)^\ast$, where $\h$ is the Lie algebra of $H$.
Recall that the linear bracket in a dual algebra is defined by the linearization at the identity $e\in G$ of the twisted Poisson bracket. That is
\[
[d_e\phi, d_e\varphi]_\ast = d_e\{\phi, \varphi\} \in \g^\ast.
\]
Since $\h$ is defined by matrices with zero first column, $\h^0$ is defined by matrices whose only nonzero entries are in the first row. We will first look for functions $\phi^i_r$ such that $d_e\phi^i_r$ generate $(\h^N)^0$. 

Indeed, let $A\in G^{N}$ be close enough to $e\in G^{N}$ so that $A = (A_r)$ can be factored as
\[
A_r = \begin{pmatrix} 1& q_r^T\\ 0&I_{m-1} \end{pmatrix} \begin{pmatrix}1&{\bf 0}^T \\ {\bf 0} & \Theta_r  \end{pmatrix}\begin{pmatrix} \theta_r&  {\bf 0}^T \\ \ell_r& I_{m-1} \\\end{pmatrix}
\]
which factors $H$ to the left as defined by $\ell_r = 0$ and $\theta_r = 1$, for all $r$. We define $\phi_r^i(A) = \ell^i_r$, where $i$ marks the $i$th entry of $\ell_r$, and $\phi_r(A) = \theta_r$. Similarly to how it was done in \cite{beffa2013hamiltonian}, one can see that 
\[
d_e \phi_r^i = E_{1,i+1}, ~~ d_e\phi_r = E_{1,1}.
\]
for any $i=1,\dots, m-1$, and so they are generators for $\h^0$. Also doing similar computations to those in \cite{beffa2013hamiltonian} we readily see that, for $A\in H$
\[
\nabla_r' \phi^i(A) = \begin{pmatrix} 0 & e_i^T \\{\bf 0 } & O\end{pmatrix}, \quad\nabla_r \phi^i(A)= \begin{pmatrix} 0 & e_i^T\Theta^{-1}\\ {\bf 0}& O\end{pmatrix},
\]
where we have used $\nabla_r \phi^i = A_r \nabla_r'\phi^i A^{-1}$ with $\ell_r = 0$ and $\theta_r = 1$. Substituting the values we found in (\ref{twisted}), very clearly 
\[
\{\phi^i, \phi^j\}(A) = 0
\]
for any $A\in H$ since both gradients are strictly upper triangular. These calculations are almost identical to those in \cite{beffa2013hamiltonian}, so we will focus on $\phi_r$, which does not appear in \cite{beffa2013hamiltonian}.

Standard calculations similar to those use to calculate $\nabla_r'\phi^i$ above show that {\it if $A\in H$} then
\[
\nabla'\phi_r(A) = \begin{pmatrix}1 & {\bf 0}^T\\ {\bf 0}&O\end{pmatrix}
\]
for any $r$. For example, we see that
\[
\begin{pmatrix} \theta_r&{\bf 0}^T \\ \ell_r & I_{m-1}\end{pmatrix} \begin{pmatrix} 1&\epsilon v_r\\ {\bf 0}& I_{m-1}\end{pmatrix} = \begin{pmatrix}1&\hq_r^T\\ 0&\widehat{\Theta}_r\end{pmatrix} \begin{pmatrix}\hat\theta_r & 0\\ \hat\ell_r& I_{m-1}\end{pmatrix}
\]
where $\hq_r = \epsilon \theta_r v_r$, $\hat \theta_r = \theta_r - \hq_r^T\hat\ell_r$, $\widehat{\Theta}_r = I_{m-1}+\epsilon\ell_rv_r^T$ and $\hat\ell_r = \widehat{\Theta}_r^{-1}\ell_r$. Differentiating with respect to $\epsilon$, one can readily see that 
\[
\nabla'\phi_r(A) = \begin{pmatrix} \ast&\ast\\ -\theta_r\ell_r&\ast\end{pmatrix}.
\]
Also, the fact that
\[
\begin{pmatrix} \theta_r&{\bf 0}^T \\ \ell_r & I_{m-1}\end{pmatrix}\begin{pmatrix} 1&{\bf 0}^T\\ w_r&I_{m-1}\end{pmatrix} = \begin{pmatrix} \theta_r&{\bf 0}^T \\ \ell_r+ w_r & I_{m-1}\end{pmatrix} 
\]
and 
\[
\begin{pmatrix} \theta_r&{\bf 0}^T \\ \ell_r & I_{m-1}\end{pmatrix}\begin{pmatrix} 1&{\bf 0}^T\\{\bf 0}&\Gamma_r\end{pmatrix} = \begin{pmatrix} 1&{\bf 0}^T\\{\bf 0}&\Gamma_r\end{pmatrix}\begin{pmatrix} \theta_r&{\bf 0}^T \\ \Gamma_r^{-1}\ell_r & I_{m-1}\end{pmatrix} 
\]
imply that the second block column of $\nabla'\phi_r(A)$ vanishes at $\ell_r = 0, \theta_r = 1$, and we get the form above.

Using again that $\nabla \phi_r = A_r \nabla'\phi_r A^{-1}$, $A\in H$, we also have 
\[
\nabla\phi_r(A) = \begin{pmatrix} 1&-q_r^T\\ {\bf 0}&0\end{pmatrix}.
\]
Since all gradients are strictly upper triangular except for $\nabla\phi_r(A)$ which has one nonzero entry in the diagonal, substituting these values in \eqref{twisted}, we get that whenever $A\in H$,
\[
\{\phi, \phi^i\}(A) = 0
\]
for all $i$. This implies that, if we fix $r$, $d_e\{\phi, \phi^i\}(\h) = d_e\{\phi^i, \phi^j\}(\h) = 0$ and so $d_e\{\phi, \phi^i\}, d_e\{\phi^i, \phi^j\}\in \h^0$, which concludes the proof of the theorem.
\end{proof}
 
\subsection{Relationship between the reduced bracket and polygonal evolutions}

Assume $\{\gamma_n\}$ is a non-degenerate twisted $N$-polygon in $\RR^m$ and assume $\GL(m)$ acts on $\RR^m$ via the standard linear action. 
\begin{proposition} The moduli space of non-degenerate twisted polygons under the linear action of $\GL(m)$ can be identified with an open and dense subset of the quotient $\GL^N(m)/H^N$, where $H^N$ acts on $\GL(m)$ via the right discrete gauge action (\ref{gauge}).
\end{proposition}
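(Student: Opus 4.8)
The plan is to construct an explicit bijection between the moduli space of non-degenerate twisted $N$-polygons in $\RR^m$ and an open dense subset of $\GL(m)^N/H^N$, exactly parallel to the difference-operator picture of Proposition~\ref{app:identification} but now keeping track of lifts rather than projective classes. First I would set up the dictionary: given a non-degenerate twisted polygon $\{\gamma_n\}$ in $\RR^m$ with monodromy $M \in \GL(m)$ (so $\gamma_{n+N} = M\gamma_n$), non-degeneracy means that for each $n$ the vectors $\gamma_n, \gamma_{n+1}, \dots, \gamma_{n+m-1}$ form a basis of $\RR^m$. Assemble them into the ``osculating frame'' $\rho_n := (\gamma_n \mid \gamma_{n+1} \mid \cdots \mid \gamma_{n+m-1}) \in \GL(m)$, a bi-infinite sequence of invertible matrices satisfying $\rho_{n+N} = M \rho_n$ (hence $N$-periodic up to the left monodromy). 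The transition matrices $A_n := \rho_{n+1}^{-1}\rho_n \in \GL(m)$ are then genuinely $N$-periodic, $A_{n+N} = A_n$, so $(A_n)_{n=1}^N \in \GL(m)^N$. This is the element of $\GL(m)^N$ attached to the polygon; one checks that $A_n$ has the companion-type shape whose last column records the linear recursion expressing $\gamma_{n+m}$ in terms of $\gamma_n, \dots, \gamma_{n+m-1}$.

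Next I would identify the ambiguities. A twisted polygon determines $\gamma_n$ only up to the linear $\GL(m)$ action $\gamma_n \mapsto g\gamma_n$, which sends $\rho_n \mapsto g\rho_n$ and leaves $A_n = \rho_{n+1}^{-1}\rho_n$ unchanged; so $A_n$ is already a $\GL(m)$-invariant of the polygon. Conversely, the choice of lift of each projective vertex is not what varies here — we are working with polygons in $\RR^m$, not $\RP^{m-1}$ — but one still has the freedom in reconstructing $\rho_n$ from $A_n$: given $(A_n)$, pick any $\rho_1 \in \GL(m)$ and propagate by $\rho_{n+1} = \rho_n A_n^{-1}$; different choices of $\rho_1$ differ by right multiplication $\rho_n \mapsto \rho_n h$, which changes the first column $\gamma_n \mapsto \rho_n h e_1$, i.e. it genuinely changes the polygon unless $h e_1 = e_1$, i.e. $h \in H$. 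So the correct statement is that the polygon (up to the $\GL(m)$-action on $\RR^m$) is recovered from $(A_n)$ together with a choice of initial frame modulo $H$; equivalently, the data is $(\rho_n)$ modulo the right gauge action $\rho_n \mapsto \rho_n h_n$ with $h_n \in H$ (checking that right-multiplying each $\rho_n$ by $h_n \in H^N$ transforms $A_n = \rho_{n+1}^{-1}\rho_n \mapsto h_{n+1}^{-1} A_n h_n$, which is precisely the right discrete gauge action \eqref{gauge}, up to the convention on which index gets the shift). Putting these together yields the bijection with $\GL(m)^N/H^N$.

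The steps, in order, would be: (1) define $\rho_n$ and $A_n$ from a polygon, verify periodicity and invertibility using non-degeneracy; (2) verify that the $\GL(m)$-action on the polygon leaves $(A_n)$ fixed, and that two polygons with the same $(A_n)$ up to $H^N$-gauge are $\GL(m)$-equivalent; (3) conversely, from $(A_n) \in \GL(m)^N$ reconstruct $(\rho_n)$ by propagation and set $\gamma_n := \rho_n e_1$, checking this is a twisted polygon with the prescribed monodromy $M = \rho_{N+1}\rho_1^{-1}$; (4) identify which $(A_n)$ arise — exactly those for which the reconstructed $\gamma_n, \dots, \gamma_{n+m-1}$ stay linearly independent for all $n$, an open condition cutting out a dense subset, which matches the ``generic points'' caveat in Theorem~\ref{reducedbracket}; (5) assemble (1)--(4) into the claimed bijection and note it is smooth with smooth inverse on the dense locus.

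The main obstacle I expect is step (4): pinning down precisely the open dense subset of $\GL(m)^N$ on which the construction is invertible, and confirming it is $H^N$-invariant so that it descends to an open dense subset of the quotient — i.e. matching ``non-degenerate polygon'' with an intrinsic genericity condition on $(A_n)$. A secondary, bookkeeping-level nuisance is getting the index conventions for the discrete gauge action to line up: the formula \eqref{gauge} reads $(g,A)\mapsto (\T g\,A\,g^{-1})$, and one must be careful whether $A_n$ should be defined as $\rho_{n+1}^{-1}\rho_n$ or $\rho_n^{-1}\rho_{n+1}$, and correspondingly whether one lands on the right or left gauge action, so that the proposition as stated (right gauge action) comes out correctly. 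Neither of these is deep, but both require care; everything else is the standard frame/recursion dictionary already used in the proof sketch of Proposition~\ref{app:identification}.
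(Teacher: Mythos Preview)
Your approach is essentially the paper's: build the osculating frame $\rho_n = (\gamma_n,\dots,\T^{m-1}\gamma_n)$, read off the companion-form transition matrix $A_n$ from $\T\rho = \rho A$, and identify the moduli space with the $H^N$-gauge quotient. The paper runs the argument in the other direction---start from an arbitrary $B\in\GL(m)^N$, solve $\T\eta = \eta B$ by recursion, set $\gamma = \eta e_1$, and then observe that the osculating frame $\rho$ of this $\gamma$ satisfies $\eta^{-1}\rho\in H^N$, so $B$ and the companion matrix $A$ lie in the same gauge orbit---but the content is the same.

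One point in your exposition is garbled and worth fixing before you write it up. You say that, given $(A_n)$, ``different choices of $\rho_1$ differ by right multiplication $\rho_n\mapsto\rho_n h$''. That is false: propagating from $\rho_1' = g\rho_1$ gives $\rho_n' = g\rho_n$ for all $n$, i.e.\ \emph{left} multiplication, which is exactly the global $\GL(m)$-action on the polygon and leaves $(A_n)$ unchanged. The $H^N$-freedom does not come from the choice of initial condition for a fixed $(A_n)$; it comes from the fact that a \emph{generic} $(B_n)\in\GL(m)^N$ is not in companion form. The osculating frame picks out a preferred representative (the companion $A$) in each gauge orbit: any other frame $\eta_n$ with $\eta_n e_1 = \gamma_n$ differs from $\rho_n$ by $\rho_n = \eta_n h_n$ with $h_n\in H$, and the corresponding transition matrices are related by the gauge action. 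So the correct logic for your step~(2) is: two elements of $\GL(m)^N$ yield $\GL(m)$-equivalent polygons iff the associated frame sequences $\eta,\eta'$ satisfy $\eta' = g\eta h$ with $g\in\GL(m)$ constant and $h\in H^N$, which on transition matrices is exactly the $H^N$-gauge relation. Once you straighten this out, your steps (1)--(5) go through, and your anticipated nuisance about left/right conventions and the shift index is real but, as you say, not deep; the paper handles it by the remark ``take inverses to revert to right from left invariant actions.''
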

The proof of this theorem can be found in \cite{MB14} for the general case where $G$ is semisimple and $H$ parabolic. The idea is that given $B\in\GL(m)^N$ we can extend it to a $N$-periodic bi-infinite sequence of elements in $\GL(m)$ and solve the {\it left} discrete system $\T\eta = \eta B$ by recursion, starting at some initial value $\eta_1$. We define the twisted polygon $\gamma = \eta e_1$, $n=1,\dots,N$, twisting it by the monodromy $g = B_N\dots B_2 B_1$ after a period has been completed. Given $\gamma$, we can define its moving frame to be
\[
\rho = (\gamma, \T \gamma, \T^2\gamma, \dots, \T^{m-1}\gamma) \in \GL(m).
\]
Clearly $\eta^{-1}\rho e_1 = \eta^{-1} \gamma = e_1$ and so $\eta_s^{-1}\rho_s\in H$. Furthermore, if $g = \eta^{-1}\rho$, then $A = g^{-1} B \T g$, where $\T \rho = \rho A$ and
\begin{equation}\label{A}
A = \begin{pmatrix} 0&0&\dots &0&a^0\\ 1&0&\dots&0&a^1\\ 0&1&\dots&0&a^2\\\vdots&\vdots&\ddots&\ddots&\vdots\\0&\dots&0&1&a^{m-1}\end{pmatrix}
\end{equation}
defined by the the invariants $a^i$ and the relation $\T^m\gamma = a^{m-1}\T^{m-1}\gamma+\dots+a^1\T\gamma+a^0\gamma$. We obtain the result when we take inverses to revert to right  from left invariant actions.

\begin{corollary}
The reduced bracket defined in the previous section induces a Poisson bracket in the space of difference invariants of twisted polygons in $\RR^m$.
\end{corollary}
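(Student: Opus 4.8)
The plan is to deduce the corollary by combining Theorem~\ref{reducedbracket} with the preceding Proposition, the only real task being to check that the difference invariants are genuine coordinates on the moduli space rather than merely invariant functions.

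First I would recall that a non-degenerate twisted $N$-polygon $\{\gamma_n\}$ in $\RR^m$ satisfies a unique recurrence
\[
\T^m \gamma = a^{m-1}\T^{m-1}\gamma + \dots + a^1 \T \gamma + a^0 \gamma
\]
with $N$-periodic coefficients $a^0,\dots,a^{m-1}$; non-degeneracy forces $a^0$ to be non-vanishing, so the recurrence is genuinely of order $m$. These coefficients are invariant under the linear $\GL(m)$-action on $\RR^m$ (they are the \emph{difference invariants}), and conversely the recurrence determines $\{\gamma_n\}$ up to this action. Hence the map sending a polygon to the periodic sequences $(a^i_r)$, $0\le i\le m-1$, $1\le r\le N$, descends to the moduli space and is injective on an open dense subset; a dimension count, $\dim\bigl(\GL(m)^N/H^N\bigr)=N(m^2-\dim H)=N\bigl(m^2-(m^2-m)\bigr)=Nm$, matches the number of invariants $a^i_r$ and is consistent with their forming a coordinate chart on an open dense subset of the moduli space. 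The transversality of the companion-matrix slice \eqref{A} to the gauge orbits, together with the smooth dependence of its entries on the quotient point, is exactly what the moving-frame construction in the proof of the preceding Proposition provides.

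Next I would invoke the two structural results. By the preceding Proposition the moduli space of non-degenerate twisted polygons in $\RR^m$ is identified, on a generic (open dense) locus, with $\GL(m)^N/H^N$, the identification carrying the class of $B\in\GL(m)^N$ to the polygon $\gamma=\eta e_1$ and thence to the invariants read off from \eqref{A}. By Theorem~\ref{reducedbracket} the twisted bracket \eqref{twisted} descends to a Poisson bracket on the open dense subset of $\GL(m)^N/H^N$ on which the quotient is a manifold. Intersecting this locus with the one on which the polygon identification is a diffeomorphism yields an open dense subset of the moduli space equipped with a Poisson bracket, and transporting that bracket through the identification with the coordinates $(a^i_r)$ produces a Poisson bracket on the space of difference invariants. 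Skew-symmetry and the Jacobi identity are inherited automatically, being pushed forward along a diffeomorphism.

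The main (and essentially the only) point needing attention is the bookkeeping just described: that the $(a^i_r)$ genuinely parametrize the quotient, and that the open sets coming from Theorem~\ref{reducedbracket} and from the preceding Proposition overlap densely. Neither is an obstacle of substance — both are already contained in the preceding Proposition and its proof — so the corollary is really a repackaging of the two preceding results in the language of difference invariants.
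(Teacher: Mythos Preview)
Your proposal is correct and matches the paper's approach: the paper states the corollary without proof, treating it as an immediate consequence of Theorem~\ref{reducedbracket} together with the preceding Proposition identifying the moduli space with $\GL(m)^N/H^N$. Your additional checks (that the $a^i_r$ form coordinates, the dimension count, and the overlap of open dense loci) are reasonable filler for what the paper leaves implicit.
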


\begin{proposition} Let $f, g: U\subset G^N/H^N \to \RR$ be two functions defined locally on the moduli space, and let $F, G: G^N \to \RR$ be extensions of $f$ and $g$, respectively, invariant under the gauge action of $H^N$ on $G^N$. Then, the reduced Poisson bracket defined by \eqref{twisted} can be written as
 \begin{equation}\label{reduceformula}
 \{f, g\}(\a) =  \langle \nabla F - \T \nabla'F, \nabla G\rangle(A) - \frac 12 \langle \nabla F - \T\nabla' F,  \nabla G - \T\nabla' G\rangle(A),
 \end{equation}
 where the relation between $\a$ and $A$ is given by \eqref{A}.
 \end{proposition}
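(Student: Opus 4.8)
The plan is to derive formula~\eqref{reduceformula} directly from the definition~\eqref{twisted} of the twisted Poisson bracket by exploiting the gauge-invariance of the extensions $F,G$. The first step is to recall the standard consequence of gauge-invariance for the gradients. If $F$ is invariant under the right gauge action $(g,A)\mapsto \T g A g^{-1}$, then differentiating the invariance relation $F(\T g\cdot A\cdot g^{-1}) = F(A)$ at $g = e$ along a path $g_s = e^{s\xi}$ yields, for every component $s$, a relation of the shape $\nabla_s' F - \nabla_s F + (\text{shift terms}) = 0$ on the gradients; more precisely one obtains that $\nabla' F - \T^{-1}\nabla F$ (or its transpose, depending on conventions) annihilates $\h^N$, i.e. lies in $(\h^N)^0$. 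I would write this out carefully and record it as a lemma, since it is the linchpin of the whole computation. The key point is that this lets one eliminate one of the two independent gradients in favor of the combination $\nabla F - \T\nabla' F$ that appears in~\eqref{reduceformula}.

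Next I would substitute into~\eqref{twisted}. The right-hand side of~\eqref{twisted} is a sum of four $\r$-terms, two ``diagonal'' ones built from $\nabla_s\wedge\nabla_s$ and $\nabla_s'\wedge\nabla_s'$, and two ``cross'' terms involving $(\T\otimes 1)$. Using $\r = r + \tfrac12\Id$, I would split each $\r$ into its skew part $r$ and the symmetric part $\tfrac12\Id$; the symmetric part paired against a wedge vanishes, but paired against the non-antisymmetrized cross terms $(\T\otimes1)(\nabla_s'F\otimes\nabla_sH)$ it contributes exactly the $-\tfrac12\langle\nabla F - \T\nabla'F, \nabla G - \T\nabla'G\rangle$ piece once the invariance relation is used to symmetrize. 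The genuinely $r$-matrix-dependent terms should then collapse: using the invariance relation to replace, say, all occurrences of $\nabla_s F$ by $\T\nabla_s' F$ modulo $(\h^N)^0$, and using that the reduced bracket only cares about gradients modulo $(\h^N)^0$ (this is precisely what Theorem~\ref{reducedbracket} guarantees, via $(\h^N)^0$ being a subalgebra), the sum of $r$-terms reorganizes into the bilinear pairing $\langle \nabla F - \T\nabla'F,\nabla G\rangle$. This is essentially a bookkeeping computation with the shift operator $\T$ on $\g^N$ and the identities $\langle \T x, y\rangle = \langle x, \T^{-1} y\rangle$, together with invariance of the inner product under $\Ad$.

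I expect the main obstacle to be the careful management of \emph{which} gradient combination is annihilating \emph{which} side, and keeping the $\T$ versus $\T^{-1}$ shifts consistent — that is, getting the infinitesimal form of the gauge-invariance relation exactly right, including the transpose/adjoint bookkeeping inherent in passing between left and right gradients (recall $\nabla F = A\,\nabla'F\,A^{-1}$). A secondary subtlety is justifying that one may freely work modulo $(\h^N)^0$ when evaluating the \emph{reduced} bracket: this requires invoking that $F,G$ are gauge-invariant extensions so that their restriction to the quotient is well-defined, and that the formula~\eqref{twisted} descends — which is exactly the content of Theorem~\ref{reducedbracket} and its proof. Once the infinitesimal invariance identity is pinned down, the rest is a direct expansion of~\eqref{twisted}, collecting the four terms and the two halves of each $\r$, and recognizing the result as~\eqref{reduceformula}. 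I would present it in that order: (i) lemma on gradients of gauge-invariant functions; (ii) substitution into~\eqref{twisted} and splitting $\r = r + \tfrac12\Id$; (iii) collapse of the $r$-terms using the lemma and $\Ad$-invariance of $\langle\,,\rangle$; (iv) identification of the symmetric remainder with the $-\tfrac12\langle\cdot,\cdot\rangle$ term.
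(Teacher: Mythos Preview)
Your plan is essentially the paper's own approach: derive the gauge-invariance constraint $\T\nabla'F-\nabla F\in(\h^0)^N$ (the paper records this as equation~\eqref{h0}), substitute into~\eqref{twisted}, and simplify using $\T$-invariance and $\mathrm{Ad}$-invariance of the pairing. So the overall strategy is correct.

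One point where your description is too optimistic: the split $\r=r+\tfrac12\Id$ does \emph{not} cleanly separate the two terms of~\eqref{reduceformula} as you suggest. The ``symmetric part'' contribution from the cross terms is $-\tfrac12\sum_{s,k}\big[(\nabla_{s+1}'F)_{k,k}(\nabla_sG)_{k,k}-(\nabla_{s+1}'G)_{k,k}(\nabla_sF)_{k,k}\big]$, which is not yet $-\tfrac12\langle\nabla F-\T\nabla'F,\nabla G-\T\nabla'G\rangle$; and the skew part does not directly collapse to $\langle\nabla F-\T\nabla'F,\nabla G\rangle$ either. The paper instead writes everything in terms of the triangular decomposition $\g=\g_+\oplus\g_0\oplus\g_-$ and exploits the \emph{specific} structure of $\h^0$ (matrices nonzero only in the first row): this means $\T(\nabla'F)_-=(\nabla F)_-$ and $\T(\nabla'F)_{k,k}=(\nabla F)_{k,k}$ for $k\ge 2$, so that only the $(1,1)$ entries require separate tracking. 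Several partial terms then mix and cancel before~\eqref{reduceformula} emerges. Your abstract ``work modulo $(\h^N)^0$'' is not enough here --- you really need the concrete first-row description of $\h^0$, and the $(1,1)$ diagonal entry has to be carried explicitly through the computation. With that adjustment your outline goes through.
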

 \begin{proof} First of all, notice that since $F((\T h) L h^{-1})) = F(L)$, we have that $\langle \T(\nabla' F) - \nabla F, \xi\rangle = 0$ for any $\xi\in \h^N$. That is
 \begin{equation}\label{h0}
 \T\nabla' F - \nabla F \in (\h^0)^N,
 \end{equation}
 or, which is the same, $ \nabla'_{s+1} F - \nabla_s F$ is zero except for the first row, for every $s$.
 
 Consider the gradation $\g = \g_+\oplus\g_0\oplus\g_-$, where $\g_-$ is defined by strictly lower triangular matrices, $\g_+$ are strictly upper triangular, and $\g_0$ are diagonal. If $\xi\in \g$, we will split it according to this gradation as $\xi= \xi_++\xi_0+\xi_-$. In this notation, the reduced bracket is given by (\ref{twisted}) applied to two such extensions. Namely
 \begin{eqnarray*}
 \{f,g\}(\a) &=&\frac12\left[\langle (\nabla F)_-, (\nabla G)_+\rangle -\langle (\nabla G)_-, (\nabla F)_+\rangle\right]\\ &+& \frac12\left[\langle (\nabla' F)_-, (\nabla' G)_+\rangle -\langle (\nabla' G)_-, (\nabla' F)_+\rangle\right]\\ &-& \langle \T(\nabla' F)_-, (\nabla G)_+\rangle + \langle \T(\nabla' G)_-, (\nabla F)_+\rangle\\
 &-& \frac 12\sum_{k=1}^m\sum_s\left[( (\nabla_{s+1}' F)_{k,k} (\nabla_s G)_{k,k}) - ((\nabla_{s+1}' G)_{k,k}(\nabla_s F)_{k,k})\right].
 \end{eqnarray*}
From (\ref{h0}) we get that $\T(\nabla'F)_-= (\nabla F)_-$ and $\T(\nabla'F)_{k,k} = (\nabla F)_{k,k}$ for $k=2,\dots,m$. Using this we get
 \begin{eqnarray*}
 \{f,g\}(\a) &=&-\frac12\left[\langle (\nabla F)_-, (\nabla G)_+\rangle -\langle (\nabla G)_-, (\nabla F)_+\rangle\right]\\ &+& \frac12\left[\langle (\nabla' F)_-, (\nabla' G)_+\rangle -\langle (\nabla' G)_-, (\nabla' F)_+\rangle\right]\\  &-& \frac 12\sum_s\left[( (\nabla_{s+1}' F)_{1,1} (\nabla_s G)_{1,1}) - ( (\nabla_{s+1}' G)_{1,1}(\nabla_s F)_{1,1})\right].
 \end{eqnarray*}
We now use the fact that $\langle X, Y\rangle = \langle \T X, \T Y\rangle$ to rewrite \[\langle (\nabla' F)_-, (\nabla' G)_+\rangle = \langle \T(\nabla' F)_-, \T(\nabla' G)_+\rangle = \langle (\nabla F)_-, \T(\nabla' G)_+\rangle.\] This gives
\begin{gather*}
-\frac12\left[\langle (\nabla F)_-, (\nabla G)_+\rangle -\langle (\nabla G)_-, (\nabla F)_+\rangle\right] \\ +\, \frac12\left[\langle (\nabla' F)_-, (\nabla' G)_+\rangle -\langle (\nabla' G)_-, (\nabla' F)_+\rangle\right]
\\
=-\frac12\left[\langle (\nabla F)_-, (\nabla G)_+- \T(\nabla' G)_+\rangle -\langle (\nabla G)_-, (\nabla F)_+- \T(\nabla' F)_+\rangle\right]
\end{gather*}
and from (\ref{h0}) we get that this first portion is equal to
\begin{gather*}
-\frac12\left[\langle \nabla F, \nabla G- \T\nabla' G\rangle -\langle \nabla G, \nabla F- \T\nabla' F\rangle\right]
\\+\,\frac12\sum_s \left[(\nabla_s F)_{1,1}(\nabla_s G- \nabla_{s+1}' G)_{1,1}- (\nabla_s G)_{1,1} (\nabla_s F- \nabla_{s+1}' F)_{1,1}\right].
\end{gather*}
Also, notice that since $\langle\,, \rangle$ is invariant under $G^N$ conjugation, we know that $\langle \nabla F, \nabla G\rangle = \langle \nabla' F, \nabla' G\rangle$. Therefore 
\begin{gather*}
\langle \nabla F, \nabla G- \T\nabla' G\rangle = \langle \T\nabla' F, \T\nabla' G\rangle- \langle \nabla F, \T\nabla' G\rangle = \langle \T\nabla'F-\nabla F, \T\nabla' G\rangle
\\
=\langle \T\nabla'F-\nabla F, \nabla G\rangle +\sum_s (\nabla_{s+1}'F-\nabla_s F)_{1,1}(\nabla_{s+1}'G-\nabla_s G)_{1,1}.
\end{gather*}
Finally, the reduced bracket can be written as
\begin{gather*}
\{f,g\}(\a)  = \langle \nabla F-\T\nabla'F, \nabla G\rangle \\ +\, \frac12\sum_s \left[(\nabla_s F)_{1,1}(\nabla_s G- \nabla_{s+1}' G)_{1,1}- (\nabla_s G)_{1,1} (\nabla_s F- \nabla_{s+1}' F)_{1,1}\right]
\\
-\,\frac 12\sum_s (\nabla_{s+1}'F-\nabla_s F)_{1,1}(\nabla_{s+1}'G-\nabla_s G)_{1,1} \\ -\,\frac 12\sum_s\left[( \nabla_{s+1}' F)_{1,1} (\nabla_s G)_{1,1} -  (\nabla_{s+1}' G)_{1,1}(\nabla_s F)_{1,1}\right]
\\
= \langle \nabla F-\T\nabla'F, \nabla G\rangle -\frac 12\sum_s (\nabla_{s+1}'F-\nabla_s F)_{1,1}(\nabla_{s+1}'G-\nabla_s G)_{1,1},
\end{gather*}
as stated.
 \end{proof}
 Assume next that a twisted polygon $\gamma$ satisfies an evolution of the form
\[
\gamma_t = X
\]
which is invariant under the action of $\GL(m)$.  This is equivalent to saying that  
\[
X = \sum_{k=0}^{m-1} q^k \T^k\gamma
\]
with $q^k$ invariant $N$-vectors under the group action for all $k=0,\dots,m-1$. 
 The next theorem will show that any evolution on the moduli space which is Hamiltonian with respect to the reduced bracket, with Hamiltonian function $f$, is induced by a polygonal evolution associated to a vector field $X^f$ which is algebraically dependent of $\delta f$, and will describe this dependence explicitly.

Let $\gamma$ be a non-degenerate twisted $N$-polygon in $\RR^m$ and let $\rho = (\gamma,\T \gamma,\dots,\T^{n+m-1}\gamma)$ be its left moving frame. Assume 
\begin{equation}\label{evolution}
\gamma_t = X
\end{equation}
is a $\GL(m)$-invariant evolution and let $Q$ be the matrix defined by 
\begin{equation}\label{Qn}
\rho_t = \rho Q.
\end{equation}
That is, 
\[
(\T^k\gamma)_t = \T^kX= \rho Q e_{k+1}.
\]
\begin{theorem}\label{qnabla}
Let $f$ be an invariant function, that is a function of $a^r$ for any $r=0,\dots, m-1$; alternatively a function defined on an open and dense subset of $\GL^N(m)/H^N$. Assume $F$ is an extension of $f$ to $\GL(m)^N$, invariant under the right gauge action of $H^N$. Then, there exists an invariant vector field $X^f$ such that if $\gamma_t = X^f$, the evolution induced on $\GL^N(m)/H^N$ is the reduced Hamiltonian evolution associated to $f$.

The vector field $X^f = \rho Q^f e_1$ is uniquely determined by the condition 
\begin{equation}\label{QF}
\T Q^f({\bf a})e_1 = \frac12 (\nabla F+\T\nabla'F)(A) e_{1}.
\end{equation}
where ${\bf a}$ and $A$ are related as in \eqref{A}. 
\end{theorem}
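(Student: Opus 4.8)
The plan is to reduce the statement to a direct computation of the reduced Hamiltonian vector field using formula \eqref{reduceformula}, and then to read off the polygonal evolution it corresponds to via the dictionary $A = g^{-1} B \T g$ between the gauge-covariant data $A$ and the invariant data $\mathbf a$. First I would fix an invariant Hamiltonian $f$ with gauge-invariant extension $F$, and write down the reduced Hamiltonian equation: for every invariant test function $g$ with extension $G$,
\[
\frac{d}{dt}\,g(\mathbf a) = \{f,g\}(\mathbf a) = \langle \nabla F - \T\nabla' F, \nabla G\rangle(A) - \tfrac12\langle \nabla F - \T\nabla' F, \nabla G - \T\nabla' G\rangle(A).
\]
On the other hand, an invariant polygonal evolution $\gamma_t = X$ with $\rho_t = \rho Q$ induces an evolution of the invariants $a^r$, and hence of $A$, namely $A_t = -\T(g^{-1}g_t) A + A (g^{-1}g_t) + \dots$; the point is that the flow of $A$ induced by $\rho_t = \rho Q$ is the gauge-covariant lift, so that $A_t = \T Q\, A - A\, Q$ up to the gauge freedom built into the factorization $g$. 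Pairing $A_t$ against $\nabla G$ (equivalently, differentiating $G$ along the induced flow) gives $\frac{d}{dt} g(\mathbf a) = \langle \T Q - \dots, \nabla G\rangle$-type expression; matching this with the reduced bracket formula forces a relation between $Q$ and $\nabla F - \T\nabla' F$.

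The key algebraic step is to rewrite the right-hand side of \eqref{reduceformula} so that the quantity $\nabla F - \T\nabla' F$, which by \eqref{h0} lies in $(\h^0)^N$ — i.e. is supported on the first row — appears paired only against the \emph{first column} $e_1$ of the relevant gradient. Concretely, because $\nabla F - \T\nabla' F$ has nonzero entries only in the first row, the inner product $\langle \nabla F - \T\nabla' F, \cdot\rangle$ only sees the first row of its argument; dualizing, the evolution of the invariants only depends on $(\nabla F + \T\nabla' F)e_1$, which is exactly the combination appearing in \eqref{QF}. I would make this precise by computing $\frac{d}{dt} a^r$ directly: since $a^r$ is determined by $\T^m\gamma = \sum_k a^k \T^k\gamma$, differentiating gives $\sum_k (a^k)_t \T^k\gamma = \T^m X - \sum_k a^k \T^k X$, i.e. $(\T Q A - A Q)e_1$-type terms expressed through $Q^f$. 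Comparing the coefficient of each $\T^k\gamma$ with the reduced bracket expression, and using the Maurer–Cartan-type relation $\T\rho = \rho A$, one extracts that the only freedom in $Q^f$ is gauge (corresponding to $H$), and that fixing $X^f = \rho Q^f e_1$ — i.e. prescribing the first column — is exactly what pins $Q^f$ down uniquely. This gives \eqref{QF}.

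The main obstacle I anticipate is bookkeeping the gauge ambiguity correctly: $A$ is only defined up to the right $H^N$-gauge action, the factorization $g$ with $A = g^{-1} B\,\T g$ is only defined up to $H^N$, and $Q$ itself is only defined up to adding something valued in $\h$ (since changing $\rho$ within its $H$-orbit over each vertex does not change $\gamma$). One must check that the condition \eqref{QF} is consistent with — and in fact exactly kills — this ambiguity: the left-hand side $\T Q^f(\mathbf a) e_1$ is the $t$-derivative of $\gamma$ transported to the frame, which is genuinely invariant, while the right-hand side $\tfrac12(\nabla F + \T\nabla' F)(A)e_1$ must be shown to be independent of the choices, using that $\nabla F$ and $\nabla' F$ transform covariantly under $H^N$ together with the fact that $H$ fixes $e_1$. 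Once that compatibility is established, existence and uniqueness of $X^f$ follow since specifying the first column of $Q^f$ together with the recursion from $A$ determines $Q^f$ completely, and the resulting $\gamma_t = X^f$ is checked to induce the reduced Hamiltonian flow by running the computation above in reverse.
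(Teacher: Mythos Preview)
Your approach matches the paper's: differentiate $\T\rho = \rho A$ together with $\rho_t = \rho Q$ to get the exact relation $A^{-1}A_t = \T Q - A^{-1}QA$ (equivalently $A_t = A\,\T Q - QA$, not $\T Q\,A - AQ$ as you wrote), pair against $\nabla H$ for a test function and use Lemma~\ref{uno} to identify this pairing with $-\sum_s\sum_i (a^i_s)_t\,\delta_{a^i_s}h$, then rewrite as $\langle \T Q,\ \nabla H - \T\nabla' H\rangle$ and compare with \eqref{reduceformula}; since $\nabla H - \T\nabla' H \in (\h^0)^N$ by \eqref{h0}, only the first column of $\T Q$ is constrained, which gives \eqref{QF} directly.

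The ``main obstacle'' you anticipate is illusory and is where your write-up goes slightly astray. Once the specific moving frame $\rho = (\gamma,\T\gamma,\dots,\T^{m-1}\gamma)$ is chosen, $A$ is the canonical matrix \eqref{A} and $Q$ is uniquely determined by $\rho_t = \rho Q$; there is no residual $H^N$-gauge freedom to bookkeep, the compatibility relation is exact rather than ``up to gauge,'' and the uniqueness claim in the theorem is simply that \eqref{QF} prescribes $Q^f e_1$ (hence $X^f = \rho Q^f e_1$) outright. Dropping the factorization $g$ and working directly with $\rho$ makes the argument clean.
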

\begin{lemma}\label{uno}
Let $f$ be a function defined locally on the moduli space $G^N/H^N$ and let $F$ be an extension as in \eqref{h0}. Then
\[
\nabla'F = \begin{pmatrix} -a^0\delta_{a^0} f& -a^0(\delta_{{\bf a}} f)^T\\ \ast&\ast\end{pmatrix}, \quad \nabla F = \begin{pmatrix} \ast & \ast\\ -(\delta_{{\bf a}} f)^T& -a^0\delta_{a^0} f-{\bf a}\cdot \delta_{{\bf a}} f\end{pmatrix}
\]
where $\a = (a^1,\dots,a^{m-1})^T$.
\end{lemma}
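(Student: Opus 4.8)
The plan is to compute $\nabla F$ and $\nabla' F$ at a point $A$ of companion form \eqref{A} directly from the definitions, exploiting that off the section of companion matrices the extension $F$ is determined by $f$. Write $\mathbf a(B)$ for the tuple of difference invariants extracted from $B\in G^N$ by the gauge reduction described above: solve the discrete system, form the moving frame $\rho$, and read off the coefficients $a^r$ from $A(B)=\rho^{-1}\,\T\rho$; then $F(B)=f(\mathbf a(B))$ in a neighbourhood of that section. Differentiating this identity gives, for $\xi\in\g^N$ and with $\xi A:=(\xi_sA_s)_s$, $A\xi:=(A_s\xi_s)_s$,
\[
\langle\nabla F(A),\xi\rangle=\sum_{s,r}\frac{\partial f}{\partial a^r_s}\,\big[d\mathbf a_A(\xi A)\big]^r_s,\qquad
\langle\nabla' F(A),\xi\rangle=\sum_{s,r}\frac{\partial f}{\partial a^r_s}\,\big[d\mathbf a_A(A\xi)\big]^r_s,
\]
where $[\ \cdot\ ]^r_s$ is the $a^r_s$-component; and since $B\mapsto\mathbf a(B)$ retracts $G^N$ onto the section along the $H^N$-gauge orbits, $d\mathbf a_A$ is simply the projection of $\g^N=\mathcal V\oplus\mathcal O$ onto $\mathcal V$, where $\mathcal V$ (the tangent to the section) consists of matrices supported on the last column and $\mathcal O=\{(\zeta_{s+1}A_s-A_s\zeta_s)_s:\zeta\in\h^N\}$ is the tangent to the gauge orbit.

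I would then proceed in two steps. First, the easy rows: when $\xi_s$ is supported on the first column (for $\nabla F$), respectively on the last column (for $\nabla' F$), the perturbation $\xi A$, respectively $A\xi$, already lies in $\mathcal V$, so $d\mathbf a_A$ acts as the identity and one reads off one row of $\nabla F(A)$ and one row of $\nabla' F(A)$ directly in terms of $w_s:=(\delta_{a^0}f_s,\dots,\delta_{a^{m-1}}f_s)$ and the explicit $A_s$, $A_s^{-1}=(\mathbf u_s,e_1,\dots,e_{m-1})$ with $\mathbf u_s=(a^0_s)^{-1}\big(e_m-\sum_{k=1}^{m-1}a^k_s e_k\big)$; these two rows determine each other via the conjugation identity $\nabla_s F=A_s(\nabla'_s F)A_s^{-1}$, which follows from $e^{\epsilon\eta}A=Ae^{\epsilon\xi}$ with $\eta=A\xi A^{-1}$ together with $e_1^TA_s=a^0_se_m^T$. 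Second, for the remaining rows — among them the first row of $\nabla' F$ and the last row of $\nabla F$ in the statement — the perturbation is transverse to $\mathcal V$, and one must subtract its $\mathcal O$-component: requiring all but the last column to vanish yields a triangular recursion along the lattice for the correction $\zeta\in\h^N$. Feeding in the gauge identity \eqref{h0}, $\T\nabla'F-\nabla F\in(\h^0)^N$ (equivalently, $\nabla_sF$ and $\nabla'_{s+1}F$ agree outside the first row), closes this recursion, and the telescoping sums collapse — using $A_se_j=e_{j+1}$ for $j<m$, $A_se_m=\sum_k a^k_se_{k+1}$, and $\T^m\gamma=\sum_k a^k\T^k\gamma$ — to the stated expressions.

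The main obstacle is precisely this recursion: the easy rows, the conjugation identity, and \eqref{h0} together form a determined linear system for $\nabla F(A)$ and $\nabla' F(A)$, but extracting the entries in question from it is genuinely multi-step because of the lattice shift, rather than a one-line manipulation. The second thing to keep track of carefully is the left/right book-keeping — which discrete system is used, the right versus left gauge action, $\nabla$ versus $\nabla'$, and the inversion that passes between the left- and right-invariant pictures — since that is what fixes the signs in the formulas; once it is chosen consistently with \eqref{A}, \eqref{gauge} and \eqref{h0} the computation is routine. Alternatively, the whole calculation can be organized in the style of the computation of $\nabla'_r\phi^i$ in the proof of Theorem~\ref{reducedbracket}, by factoring $B$ near $A$ as a companion matrix times an element of $H^N$ and differentiating that factorization.
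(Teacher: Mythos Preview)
Your plan works but is more elaborate than needed, and the extra work comes from evaluating at the wrong point. You compute at the companion matrix $A$ of \eqref{A}; the paper evaluates at $A^{-1}$, remarking that one must invert $A$ to pass to right gauges. This single move flips which rows are ``easy.'' The section of inverse companion matrices
\[
A^{-1}(a)=\begin{pmatrix}-\a\,(a^0)^{-1}&I_{m-1}\\(a^0)^{-1}&\mathbf{0}^T\end{pmatrix}
\]
varies only in its first column, so the right perturbations $A^{-1}V$ with $V=I_m-(a^0)^{-1}vE_{i+1,1}$ (and $W=\mathrm{diag}(e^{-w},1,\dots,1)$ for the $(1,1)$ entry) stay on that section. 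Differentiating $F(A^{-1}V)=f(\dots,a^i+v,\dots)$ reads off the \emph{first} row of $\nabla'F$ directly --- no projection along the orbit tangent, no lattice recursion. The last row of $\nabla F$ then follows from conjugation at $A^{-1}$, namely $\nabla F=A^{-1}(\nabla'F)A$: since the last row of $A^{-1}$ is $((a^0)^{-1},0,\dots,0)$, one has $e_m^T\nabla F=(a^0)^{-1}e_1^T(\nabla'F)A$, which uses only the row just computed. That is the entire argument.

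Your route at $A$ is not wrong, only circuitous: there the easy rows are the \emph{last} row of $\nabla'F$ and the \emph{first} row of $\nabla F$ --- the opposite of what the lemma records --- so you are forced into the shift-recursion via \eqref{h0} and conjugation. That recursion is genuine and does close as you say; in fact the paper uses exactly this mechanism, but \emph{later}, in the proof of Theorem~\ref{XY} around \eqref{qrfun}, where the remaining entries of $\nabla F$ are actually needed. For the present lemma the inversion avoids it entirely. Your closing ``alternative'' (factor near the section and differentiate the factorization) is in spirit what the paper does; its concrete realization is simply the choice of the explicit one-parameter families $V$, $W$ above at $A^{-1}$ rather than at $A$.
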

\begin{proof}
Assume $F$ is an extension of $f$ to $G^N$ such that $F(A^{-1}) = F(\T h A^{-1} h^{-1}\}) = F(A^{-1}(a))$, where
\[
A^{-1}(a) = \begin{pmatrix} -\a (a^0)^{-1} & I_{m-1}\\(a^0)^{-1}&{\bf 0}^T\end{pmatrix}.
\]
(Notice that we need to invert $A$ in order to consider right gauges.) If $F(A^{-1}) = f(a^0, \a)$, then
\[
F(A^{-1} V) = f(a^0, \a+ v e_i)
\]
whenever 
\[
V = I_m - (a^0)^{-1} v E_{i+1, 1}, \quad i = 1,\dots, m-1.
\]
Differentiating in $v$ we get that 
\[
\delta_{{\bf a}} f v  = -v(a^0)^{-1} \tr(\nabla'F(A^{-1}) E_{i+1,1})
\]
which implies that $\left(\nabla'F(A^{-1})\right)_{1,i+1} = -a^0 \delta_{a^i} f$. Similarly,
\[
F(A^{-1}W) = f(a^0\exp(w))
\]
where 
\[
W = {\rm diag}(\exp(-w), 1, 1, \dots 1).
\]
Differentiating we get 
\[
\delta_{a^0} f a^0w = -w\tr(\nabla'F(A^{-1}) E_{1,1}))
\]
and so $\left(\nabla'F(A^{-1})\right)_{1,1} = -a^0 \delta_{a^0} f$.

We finally use $\nabla F(A^{-1})= A^{-1}\nabla'F(A^{-1})A$ to show that \[e_m^T \nabla F(A^{-1}) e_i = (a^0)^{-1} e_1^T \nabla'F(A^{-1})e_{i+1},\] for $i=1,\dots ,m-1$ and \[e_m^T \nabla F(A^{-1}) e_m =(a^0)^{-1} e_1^T\nabla'F(A^{-1})(\sum_{i=0}^{m-1} a^i e_{i+1}),\] as shown in the statement of the lemma.
\end{proof}

\begin{proof}[Proof  of Theorem \ref{qnabla}] Recall that $\T\rho= \rho A$ and $\rho_t = \rho Q$. This implies that
\begin{equation}\label{KQ}
A^{-1} A_t  = \T Q - A^{-1} Q A
\end{equation}
The LHS of this equality is given by
\begin{equation}\label{KQ}
 A^{-1} A_t = A^{-1} \begin{pmatrix} {\bf 0}^T & a^0_t\\ 0_{m-1} & \a_t\end{pmatrix}=\begin{pmatrix} 0_{m-1} & \a_t- (a^0)^{-1}a^0_t\a\\{\bf 0}^T & (a^0)^{-1}a^0_t\end{pmatrix}.  
 \end{equation}
 Using this expression and the lemma, we get that, if $F$ and $H$ are extension for $f$ and $h$
 \[
\langle \T Q - A^{-1} Q A, \nabla H\rangle = -\sum_{s=1}^N\sum_{i=0}^{m-1} (a_s^i)_t \delta_{a_s^i} h
\]
on the one hand, and on the other hand
\begin{gather*}
\langle \T Q - A^{-1} Q A, \nabla H\rangle  
 = \langle Q,\T^{-1}\nabla H- A \nabla HA^{-1})\rangle = \langle Q, \T^{-1}\nabla H- \nabla' H\rangle.
\end{gather*}
Assume $X^f$ is the vector field inducing the $f$-Hamiltonian evolution on $\a$ and $Q^f$ its associated sequence in $\GL(m)$. Then 
\[
\langle Q^f, \T^{-1}\nabla H- \nabla' H\rangle = \{f, h\}(\a).
\]
On the other hand, from (\ref{reduceformula}) we have that
\[ -\langle \nabla H - \T \nabla'H, \nabla F\rangle(A) + \frac 12 \langle \nabla F - \T\nabla' F,  \nabla H - \T\nabla' H\rangle(A) = \{f,h\}(\a)
\]
for all $h$. From \eqref{h0} we see that this relation holds true under the conditions 
\[
\T Q e_1 = \nabla F e_1 - \frac 12(\nabla F - \T\nabla' F) e_1 = \frac12(\nabla F + \T\nabla' F)e_1
\]
as stated in the theorem.
\end{proof}

\begin{lemma}\label{dos} Assume $\gamma_t = X^f$ induces an $f$-Hamiltonian evolution on the invariants $\a^r$. That is, assume $Q^f$, defined as in \eqref{Qn} satisfies (\ref{QF}). Then
\[
(Q^f)_{1,r} = (\nabla'F)_{1,r} =\T^{-1}(\nabla F)_{1,r} , \quad r=2,\dots,m, \quad\quad (Q^f)_{1,1} = \frac 12\left(\T^{-1}\nabla F+\nabla'F\right)_{1,1}.
\]
\end{lemma}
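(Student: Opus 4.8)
The plan is to reconstruct $Q^f$ from two facts and read off its first row. First, \eqref{QF} fixes the first \emph{column} of $Q^f$: applying $\T^{-1}$,
\[
Q^f e_1 \;=\; \tfrac12\bigl(\T^{-1}\nabla F+\nabla'F\bigr)e_1 ,
\]
and its first entry is exactly the last assertion of the lemma. Second, $Q^f$ comes from a polygonal evolution via \eqref{Qn}, so $\rho_t=\rho Q^f$ while $\T\rho=\rho A$ with $A$ the companion matrix \eqref{A}; differentiating $\T\rho=\rho A$ in $t$ gives the identity $A^{-1}A_t=\T Q^f-A^{-1}Q^f A$ from the proof of Theorem~\ref{qnabla}, whose right-hand side is supported only in its last column. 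Comparing the first $m-1$ columns and using $Ae_{r-1}=e_r$, this becomes the ``companion recursion''
\[
Q^f e_r \;=\; A\,\T Q^f e_{r-1},\qquad r=2,\dots,m ,
\]
so $Q^f$ is determined by its first column alone.

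The key step is to compare $Q^f$ with $\nabla'F$. Using the conjugation relation $\nabla'F=A\,\nabla F\,A^{-1}$ and $A^{-1}e_r=e_{r-1}$, the right gradient satisfies $(\nabla'F)e_r=A(\nabla F)e_{r-1}$; and since \eqref{h0} forces $\nabla F-\T\nabla'F$ to lie in $(\h^0)^N$, i.e.\ to have a single nonzero row, one has $(\nabla F)e_{r-1}=(\T\nabla'F)e_{r-1}+c_{r-1}e_1$ with $c_{r-1}=(\nabla F-\T\nabla'F)_{1,r-1}$ a scalar sequence, whence $(\nabla'F)e_r=A\,\T(\nabla'F)e_{r-1}+c_{r-1}e_2$ (because $Ae_1=e_2$). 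Subtracting from the companion recursion, the discrepancy $\Delta:=Q^f-\nabla'F$ obeys $\Delta e_r=A\,\T(\Delta e_{r-1})-c_{r-1}e_2$, while by \eqref{h0} its first column $\Delta e_1=\tfrac12(\T^{-1}\nabla F-\nabla'F)e_1$ is supported only in the $(1,1)$-slot. Since $A$ is a companion matrix, $A\,\mathrm{span}(e_1,\dots,e_k)=\mathrm{span}(e_2,\dots,e_{k+1})$, so induction gives $\Delta e_r\in\mathrm{span}(e_2,\dots,e_r)$ for $r=2,\dots,m$; in particular $\Delta e_r$ has no $e_1$-component, i.e.\ $(Q^f)_{1,r}=(\nabla'F)_{1,r}$ for $r=2,\dots,m$. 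The remaining identity $(\nabla'F)_{1,r}=\T^{-1}(\nabla F)_{1,r}$ comes out of the same computation run with $\T Q^f$ (whose first column is $\tfrac12(\nabla F+\T\nabla'F)e_1$ directly from \eqref{QF}) against $\nabla F$; finally Lemma~\ref{uno} rewrites both in terms of $\delta_{a^\bullet}f$, matching the form used later.

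The place I expect trouble is exactly this comparison and, underneath it, the bookkeeping of conventions: keeping straight the dictionary between $A$ and $A^{-1}$ and between the left and right gradients, on which the placement of the shifts $\T$ in \eqref{QF}, \eqref{h0}, and Lemma~\ref{uno} depends; checking that the ``last-column'' term $A^{-1}A_t$ only ever reaches the $m$-th component and so never contaminates the first-row entries being read off; and verifying that the scalar corrections $c_{r-1}$ are shepherded by the companion shift into the $e_2,\dots,e_r$ block rather than back onto $e_1$. The structural reason the first row of $Q^f$ comes out clean is precisely that $A$ sends $e_1$ to $e_2$ rather than wrapping around, so the $(1,1)$-slot discrepancy of the first column is never fed back to the first row in $m-1$ or fewer steps.
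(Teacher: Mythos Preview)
You have taken the lemma at face value as a statement about the first \emph{row} of $Q^f$, and then built a careful companion-recursion argument to extract that row from the first column. But look at how the lemma is actually used in the proof of Theorem~\ref{XY}: the quantities computed there are $e_r^T Q_s^f e_1$, i.e.\ the entries of the first \emph{column} of $Q^f$. The indexing in the lemma statement is transposed; the intended assertion is
\[
(Q^f)_{r,1}=(\nabla'F)_{r,1}=\T^{-1}(\nabla F)_{r,1}\quad(r=2,\dots,m),\qquad (Q^f)_{1,1}=\tfrac12\bigl(\T^{-1}\nabla F+\nabla'F\bigr)_{1,1}.
\]
With that reading the paper's two-line proof is complete and your machinery is unnecessary: \eqref{QF} already hands you the entire first column $Q^f e_1=\tfrac12(\T^{-1}\nabla F+\nabla'F)e_1$, and \eqref{h0} says that $\nabla'F-\T^{-1}\nabla F$ is supported in the first row, so in rows $r\ge 2$ the two summands coincide and the average collapses to either one.

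Your recursion $Q^f e_r=A\,\T Q^f e_{r-1}$ and the comparison with $\nabla'F$ are set up correctly, and the inductive containment $\Delta e_r\in\mathrm{span}(e_2,\dots,e_r)$ does go through for the $(Q^f)_{1,r}=(\nabla'F)_{1,r}$ part. But the companion identity $(\nabla'F)_{1,r}=\T^{-1}(\nabla F)_{1,r}$ does \emph{not} follow from \eqref{h0} in the row direction (that is precisely the row where $\nabla F-\T\nabla'F$ is allowed to be nonzero), and your sketch of ``running the same computation with $\T Q^f$ against $\nabla F$'' does not obviously close: the recursion for $\T Q^f$ involves $\T A$ rather than $A$, so the clean companion shift $Ae_1=e_2$ that protects the first row is lost. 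In short, you have solved a harder (and partly different) problem than the one the paper needs; once the indices are read as column entries, both claims are immediate from \eqref{QF} and \eqref{h0}.
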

\begin{proof}
The value of $(Q^f)_{1,1}$ follows directly from (\ref{QF}), while the value for $(Q^f)_{1,r}$ follows directly from \eqref{QF} and \eqref{h0}.\end{proof}

 \section{Connection between both Poisson brackets}

We know return to the definitions and results of Section \ref{sec:scalar}, and in particular the definition of the group of $N$-periodic pseudo-difference operators $\IPSIDO{N}$ as in~\eqref{app:psido}, its algebra $\PSIDO{N}$ and its Poisson structure \eqref{app:ho}. As we showed in Section~\ref{sec:scalar}, the space of $N$-periodic difference operators is a Poisson submanifold of $\PSIDO{N}$, assuming the latter is endowed with Poisson structure \eqref{app:ho}. We will denote that space of difference operators as $\PDO{N}$.
Using the inner product in $\PSIDO{N}$ one can represent the variational derivative of a function $F \colon \IPDO{N} \to \RR$ as an element of $\PSIDO{N}$ in standard fashion. Our next step is to connect them to the context in the previous section. To do this we will consider the subspace $\IDO{N}{m}$ of invertible difference operators of degree $m$.

Let $\F: \IDO{N}{m} \to \RR$ and let $f:\PSL(m)^N/H^N\to \RR$ be defined as
\[
\F(D) = \F\left(\sum^{m}_{r=0} a^r\T^r\right) = f(a^r)
\]
where $a_r$ are the invariants determined by the local section in $\PSL(m)^N/H^N$ and defined by \eqref{A}. From the definition of variational derivative, we can see that
\[
\delta_D F(D) = \sum_{r=0}^m \T^{-r} \delta_{a^r} f(\a)
\]
where $\delta_{a^r} f$ is the standard variational vector derivative of $f$ in the $a^r$ direction. 

We will next slightly restate the definition of \eqref{app:ho} with this notation. If $()_+, ()_-:\PSIDO{N}\to \PSIDO{N}$ are defined as 
\[
 \left(\sum^{m}_{r=-\infty} b^r\T^r\right)_+ =  \sum^{m}_{r=1} b^r\T^r\quad \text{and}\quad  \left(\sum^{m}_{r=-\infty} b^r\T^r\right)_- =  \sum^{-1}_{r=-\infty} b^r\T^r,
 \]
we define the {\it standard scalar $r$-matrix} in $\PSIDO{N}$ as 
\[
r(D) = \frac 12 (D_+-D_-).
\]

Given this standard $r$-matrix, we can define a Poisson bracket on the space $C^\infty(\IPDO{N}, \RR)$. If $\F, \G\in C^\infty(\IPDO{N}, \RR)$ we define
\begin{equation}\label{scalarPoisson}
\{\F, \G\}_1(D) =  \langle r(D\delta_D\F ), D \delta_D\G\rangle - \langle r(\delta_D\F D), \delta_D\G D\rangle,
\end{equation} 
where $\delta_D\F$ and $\delta_D\G$ are evaluated at $D$.
This is the bracket defined in section \ref{sec:scalar}, slightly modified in the notation.
\begin{proposition}\label{scalarreduction} The bracket \eqref{scalarPoisson} can be reduced to the quotient by left action. This quotient can be identified with a section described by the submanifold $b_s^m = -1$ for all $n=1,\dots,N$.
\end{proposition}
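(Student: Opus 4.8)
The plan is to mimic, in the scalar pseudo-difference setting, the argument that was used to reduce the left action of order-zero operators in the proof of Proposition~\ref{app:tnpoisson}. Recall from there that the left multiplication action of $\IDO{N}{0}$ on $\IPSIDO{N}$ was shown to be Poisson by writing it as the composition $\mathcal{D}\mapsto(\alpha,\mathcal{D})\mapsto\alpha\mathcal{D}$ of the embedding $\IPSIDO{N}\to\IPSIDO{N}\times\IPSIDO{N}$, $\mathcal{D}\mapsto(\alpha,\mathcal{D})$, and the multiplication map, these being Poisson by the third and first statements of Proposition~\ref{app:mainprop} respectively. So the first step is to observe that the left action relevant here, $D\mapsto fD$ with $f\in\IDO{N}{0}$ an $N$-periodic non-vanishing sequence, is exactly this Poisson action restricted to the Poisson submanifold $\IDO{N}{m}$ (which is a Poisson submanifold by the second statement of Proposition~\ref{app:mainprop}), so that the bracket \eqref{scalarPoisson} descends to the quotient $\IDO{N}{m}/\IDO{N}{0}$.

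Next I would exhibit the section. Given $D=\sum_{r=0}^m b^r\T^r\in\IDO{N}{m}$ with $b^m$ a non-vanishing $N$-periodic sequence, one wants a unique $f\in\IDO{N}{0}$ with $(fD)$ having $\T^m$-coefficient equal to $-1$; since left multiplication by $f$ multiplies the $\T^m$-coefficient termwise by $f$ (because $f$ is order zero, $f\circ b^m\T^m = (f b^m)\T^m$), the condition $fb^m=-1$ determines $f_i=-1/b^m_i$ uniquely, and this $f$ is $N$-periodic because $b^m$ is. Hence the submanifold $\{b^m=-1\}$ meets every left-orbit in exactly one point, i.e.\ it is a section, and one identifies the quotient with it. One should also note the leftover gauge freedom: the stabilizer of the section inside the group of left-acting periodic sequences is trivial, so the identification is clean. (This is the place where I would be slightly careful that one is working over the open dense locus where $b^m$ is invertible, consistent with the conventions already set up for $\IDO{N}{m}$ and $\PBDO{N}{m}$.)

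The final step is to transport the reduced bracket onto the section and record that it is the restriction of \eqref{scalarPoisson}. Since the left action admits a \emph{Poisson} section — this is the scalar analog of the remark in the introduction that ``the left action of functions on order $m$ operators admits a Poisson section given by monic operators, so reduction by that action is equivalent to restriction'' — reduction by the left action is the same as restriction to the section $\{b^m=-1\}$. The one genuine point to check is precisely that the section is Poisson, i.e.\ that the Hamiltonian vector fields of left-invariant functions are tangent to $\{b^m=-1\}$; equivalently, that the Poisson tensor \eqref{app:ho} evaluated at a point of the section has vanishing $\T^m$-component, which follows because the $\T^m$-coefficient $b^m$ is a Casimir-type quantity for the quotient direction (its bracket with everything is proportional to $b^m$, hence the flow only rescales it and preserves $b^m=-1$; more precisely the left action rescales $b^m$ and the reduced bracket on the section is obtained by this rescaling). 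I expect this tangency/Poisson-section verification to be the main obstacle, but it is a short computation using \eqref{app:ho} together with the fact that $r\pm\frac12\Id$ have image in $\g_0+\g_\pm$, exactly as in the proof of the second statement of Proposition~\ref{app:mainprop}.
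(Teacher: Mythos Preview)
Your first two steps are correct and match the paper's own (one-line) argument: invariance under left multiplication by $\IDO{N}{0}$ was already established in the proof of Proposition~\ref{app:tnpoisson}, and the transversality of $\{b^m=-1\}$ to left orbits is immediate. That is all the proposition asserts.

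Your third step, however, contains a genuine error. You claim that $\{b^m=-1\}$ is a \emph{Poisson} section, so that reduction equals restriction. This is false in the discrete setting, and the paper says so explicitly. The passage you quote from the introduction --- ``the left action of functions on order $m$ operators admits a Poisson section given by monic operators'' --- refers to the \emph{continuous} (differential) case. The very next paragraph of the introduction states that ``although both difference and $q$-difference operators admit natural multiplicative Poisson structures, these structures do not restrict to monic operators''; this failure is precisely why the paper's whole construction proceeds via the left-right quotient rather than via restriction. Concretely, in the $m=2$ case the formulas~\eqref{app:pbformulas} give $\{\beta_i,\gamma_i\}=\tfrac12\beta_i\gamma_i$, which equals $-\tfrac12\beta_i\neq 0$ on the section $\gamma=-1$, so the Hamiltonian vector field of $\beta_i$ is not tangent to the section. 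Your heuristic ``its bracket with everything is proportional to $b^m$, hence the flow only rescales it and preserves $b^m=-1$'' is backwards: a nonzero multiple of $-1$ is still nonzero, so rescaling moves you off the section.

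The upshot is that your step~3 is neither needed for the proposition nor salvageable. The paper handles the computation of the reduced bracket on the section by the proposition immediately following this one: one extends functions on the section to left-invariant functions on $\IDO{N}{m}$ and uses the resulting constraint $\delta_{b^m}f|_{b^m=-1}=\sum_{r=0}^{m-1}b^r\delta_{b^r}f|_{b^m=-1}$ to obtain formula~\eqref{scalarred} for the variational derivative. That, not restriction, is how the reduced bracket is actually computed.
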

\begin{proof}
This is an immediate consequence of the fact that \eqref{scalarPoisson} is invariant under left scalar multiplication, as explained in Proposition \ref{app:tnpoisson}.
\end{proof}
\begin{proposition} Let $\F : \IDO{N}{m} \to \RR$ be a smooth function, invariant under left scalar multiplication. In particular, if $\F(\sum^{m}_{r=0} b^r\T^r) = f(b^r)$ as above, then $f(b^r) = f(b^r/\alpha)$ for any $N$-periodic bi-infinite sequence $\alpha$, $\alpha_s \ne 0$ for all $s$. Then \[\delta_{b^m} f|_{b^m = -1} = \sum_{r=0}^{m-1} b^r \delta_{b^r} f|_{b^m = -1}.\]
\end{proposition}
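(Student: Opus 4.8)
The plan is to differentiate the left-invariance identity along a one-parameter family of order-zero operators, extract a pointwise relation among the variational derivatives $\delta_{b^r} f$, and then restrict to the section $b^m \equiv -1$ of Proposition \ref{scalarreduction}.

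First I would fix an operator $D = \sum_{r=0}^m b^r \T^r \in \IDO{N}{m}$ and an arbitrary $N$-periodic sequence $v = (v_i)$, and consider the curve of non-vanishing $N$-periodic sequences $\alpha(t)_i := 1 + t v_i$, which equals $\mathbf 1$ at $t = 0$. Left-invariance of $\F$ (in the stated form $f(b^r) = f(b^r/\alpha)$) gives $f(b^r/\alpha(t)) = f(b^r)$ for all small $t$. Differentiating at $t = 0$, using $\frac{\rd}{\rd t}\big|_{0}\big(b^r_i/\alpha(t)_i\big) = -b^r_i v_i$ and the definition of the variational derivatives $\delta_{b^r} f$ (paired through $\langle u, w\rangle = \sum_{i=1}^N u_i w_i$), one obtains
\[
0 = \sum_{r=0}^m \big\langle \delta_{b^r} f,\, -b^r v\big\rangle = -\sum_{i=1}^N\Big(\sum_{r=0}^m b^r_i\,(\delta_{b^r} f)_i\Big)\, v_i .
\]
Since $v$ is arbitrary, this forces $\sum_{r=0}^m b^r_i (\delta_{b^r} f)_i = 0$ for every $i$, i.e. $\sum_{r=0}^m b^r\, \delta_{b^r} f = 0$ as an identity of $N$-periodic sequences, valid at every $D$.

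Then I would specialize to the section $b^m \equiv -1$. Since restricting the domain and then differentiating in the transverse directions $b^0,\dots,b^{m-1}$ agrees with differentiating first and then restricting, we have $\delta_{b^r}\big(f|_{b^m=-1}\big) = (\delta_{b^r} f)|_{b^m=-1}$ for $r < m$; substituting $b^m \equiv -1$ into the identity above and isolating the $r = m$ term yields precisely $\delta_{b^m} f|_{b^m = -1} = \sum_{r=0}^{m-1} b^r\, \delta_{b^r} f|_{b^m = -1}$.

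The argument is short and is essentially a chain-rule computation; the one point that needs care — and the one I would emphasize — is that the deformation $\alpha$ must be allowed to vary with the index $i$ within a period. Using a global scalar $\alpha$ would only produce the single summed relation $\sum_r \langle \delta_{b^r} f, b^r\rangle = 0$, which is strictly weaker than the pointwise identity the statement requires; an index-dependent velocity $v$ is exactly what upgrades it. Beyond this, the only bookkeeping is keeping track of the $\RR^N$-valued (i.e. $N$-periodic) nature of the coefficients and the corresponding pairing, so I do not expect any genuine obstacle.
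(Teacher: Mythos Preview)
Your argument is correct and is essentially the same chain-rule computation as the paper's: the paper writes $f(b^0,\dots,b^m)=f(-b^0/b^m,\dots,-b^{m-1}/b^m,-1)$ and differentiates in $b^m$ at $b^m=-1$, which amounts to exactly the Euler-type identity $\sum_{r=0}^m b^r\,\delta_{b^r}f=0$ you obtain by varying $\alpha$ near~$1$. Your version is a bit more explicit about why the relation holds pointwise in the index $i$ (via the arbitrary $v$), but otherwise the two proofs coincide.
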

\begin{proof}
Assume $f$ is as in the statement of the proposition, so that \[f(b^0, b^1, \dots, b^{m-1}, b^m) = f(-b^0/b^m, -b^1/b^m, \dots, -b^{m-1}/b^m, -1).\] Differentiating we get the result of the proposition.\end{proof}
 We will call $\RDO{N}{m}$ the space of difference operators of the form
\begin{equation}\label{D}
D = \sum_{r=0}^{m-1} a^r \T^r - \T^m.
\end{equation}
An immediate corollary is that the reduction of \eqref{scalarPoisson} to $\RDO{N}{m}$ is given by the formula \eqref{scalarPoisson} with 
\begin{equation}\label{scalarred}
\delta_{D} \F = \sum_{r=0}^m \T^{-r} \delta_{a^r} f =  \sum_{r=0}^{m-1} (T^{-m}a^r+\T^{-r}) \delta_{a^r} f.
\end{equation}

\vskip 1ex
 Let $D = \sum_{r=0}^{m-1} a^r \T^r - \T^m$ be the operator associated to twisted projective polygons, such that $D(\gamma) = 0$ for any polygon $\gamma$ with invariants $\{a^r\}$. Define the bi-infinite sequence
\[
d = \det(\gamma, \T \gamma, \dots, \T^{m-1}\gamma).
\]
We say $\gamma$ is {\it non-degenerate} whenever $d_s \ne 0$ for any $s$. Under this identification, we can locally view $\RDO{N}{ m}$ as a section of the moduli space of polygons, under the centro-affine action of $\GL(m)$. 

We now assume that $\gamma_n$ evolves as 
\[
\gamma_t = Y
\]
as described in the previous section. We want to answer the same question as we did before for the moduli space Poisson bracket: can we identify $Y$ so that the evolution induced on $a^r$ is Hamiltonian with respect to \eqref{app:ho}, with Hamiltonian function $\F$? We will denote such a field by $Y^f$.

\begin{proposition} If $\gamma$ is a solution of $\gamma_t = Y^f$, with 
\begin{equation}\label{yf}
Y^f = r(\delta_D\F D)(\gamma)
\end{equation}
 then the invariants $a^r$ satisfy the \eqref{app:ho}-Hamiltonian evolution for $\F$. The vector fields $Y^f$ are unique up to the addition of $g\gamma$, with $g\in \GL(m)$,
\end{proposition}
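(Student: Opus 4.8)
The plan is to differentiate the defining relation $D\gamma=0$ along the prescribed flow and to match the outcome against the Hamiltonian vector field of the reduced bracket \eqref{scalarPoisson}. First I would make that vector field explicit: using invariance and cyclicity of the trace form (so that $\langle AB,C\rangle=\langle A,BC\rangle=\langle CA,B\rangle$), formula \eqref{scalarPoisson} rewrites as $\{\F,\G\}_1(D)=\langle\, r(D\,\delta_D\F)\,D-D\,r(\delta_D\F\,D),\ \delta_D\G\,\rangle$, so the evolution of $D$ generated by $\F$ is $\dot D=r(D\,\delta_D\F)\,D-D\,r(\delta_D\F\,D)$, which is \eqref{app:ho} up to the overall sign separating the bracket $\{\cdot,\cdot\}_1$ from the Poisson tensor $\pi$. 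By Proposition \ref{scalarreduction} together with \eqref{scalarred}, this descends to the section $\RDO{N}{m}$, where it is the Hamiltonian evolution of the invariants $a^0,\dots,a^{m-1}$; in particular the reduced $\dot D$ is a difference operator of order $\le m-1$.

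Next, assuming $\gamma_t=Y^f=r(\delta_D\F\,D)(\gamma)$, let $D=D(t)=\sum_{r=0}^{m-1}a^r(t)\T^r-\T^m$ be the operator attached to the evolving (non-degenerate) polygon. Differentiating $D\gamma=0$ and using $D\gamma=0$ once more yields $\dot D\,\gamma=-D\,\gamma_t=-D\,r(\delta_D\F\,D)(\gamma)$. On the other hand, plugging $D\gamma=0$ into the formula above, the Hamiltonian operator satisfies $\dot D_{\mathrm{Ham}}\,\gamma=-D\,r(\delta_D\F\,D)(\gamma)$ as well, since its term $r(D\,\delta_D\F)\,D$ kills $\gamma$. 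Hence $\dot D$ and $\dot D_{\mathrm{Ham}}$ act identically on $\gamma$; both are difference operators of order $\le m-1$ — the former because the leading term $-\T^m$ is fixed by the assignment $\gamma\mapsto D$, the latter by the reduction above — and, by non-degeneracy, $\gamma_s,(\T\gamma)_s,\dots,(\T^{m-1}\gamma)_s$ span $\RR^m$ for every $s$, so such an operator is determined by its action on $\gamma$. Therefore $\dot D=\dot D_{\mathrm{Ham}}$, i.e. the invariants $a^r$ evolve by the \eqref{app:ho}-Hamiltonian equations of $\F$.

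For the ambiguity statement, suppose $Z$ is any polygonal vector field for which $\gamma_t=Z$ induces the same evolution of the $a^r$; then it induces the same operator $\dot D=\sum_r\dot a^r\T^r$, so $DZ=-\dot D\,\gamma=DY^f$ and hence $D(Z-Y^f)=0$. Conversely, a constant matrix $g$ commutes with $D$ (whose coefficients are scalar sequences), so $D(g\gamma)=g\,D\gamma=0$, and replacing $Y^f$ by $Y^f+g\gamma$ changes neither $\dot D$ nor the induced evolution of the $a^r$. Finally $\ker D=\{\,g\gamma : g\in\gl(m)\,\}$: the $m$ coordinate sequences of $\gamma$ are linearly independent scalar solutions of $D$ (otherwise $\gamma$ would lie in a proper projective subspace, contradicting non-degeneracy), hence form a basis of the scalar kernel, and their $\RR^m$-linear combinations are precisely the sequences $g\gamma$. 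This is the stated ambiguity.

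The only substantial content is the two non-degeneracy arguments just used — that an order $\le m-1$ difference operator is determined by its action on $\gamma$, and that $\ker D$ coincides with the orbit $\gl(m)\cdot\gamma$ of the centro-affine group action — the rest being bookkeeping with the trace form plus an appeal to the already-established reduction (Propositions \ref{app:mainprop} and \ref{scalarreduction}) to guarantee that $\dot D_{\mathrm{Ham}}$ is tangent to $\RDO{N}{m}$, i.e. of order $\le m-1$. The step requiring the most care is keeping sign conventions consistent among \eqref{scalarPoisson}, \eqref{app:ho} and the definition \eqref{yf} of $Y^f$, so that the two expressions obtained for $\dot D\,\gamma$ genuinely coincide rather than differ by a sign.
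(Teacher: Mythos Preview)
Your proof is correct and is essentially the same argument as the paper's: differentiate the relation $D\gamma=0$ and use that the term $r(D\,\delta_D\F)\,D$ (resp.\ $r(\delta_D\F\,D)\,D$, depending on the sign convention) in the Hamiltonian flow annihilates $\gamma$. The only difference is directional: the paper assumes $D$ evolves Hamiltonianly and reads off $\gamma_t=r(\delta_D\F\,D)(\gamma)$ modulo $\ker D$, whereas you start from $\gamma_t=Y^f$ and show the induced $\dot D$ coincides with $\dot D_{\mathrm{Ham}}$ via the non-degeneracy observation that an operator of order $\le m-1$ is determined by its action on $\gamma$; your direction matches the statement as phrased, and you spell out the identification $\ker D=\{g\gamma:g\in\gl(m)\}$ that the paper leaves implicit.
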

\begin{proof}
The proof is straightforward. Using \eqref{app:ho} we know that the $\F$-Hamiltonian evolution in $\RDO{N}{m}$ is given by
\[
D_t = r(\delta_D\F D)D- D r(\delta_D\F D).
\]
We also know that $D(\gamma) = 0$, which means 
\[
D_t (\gamma) + D (\gamma_t) = 0.
\]
From here
\[
D_t (\gamma) =  (r(\delta_D\F D)D- D r(\delta_D\F D))(\gamma) = -D r(\delta_D\F D)(\gamma)
\]
and so
\[
D r(\delta_D\F D)(\gamma)
 = -D (\gamma_t).
 \]
Putting both together we get that $\gamma_t = r(\delta_D\F D)(\gamma)$, up to an element in the kernel of $D$, as stated in the proposition.
\end{proof}
\begin{comment}{\rm Notice that the vector field $g\gamma_n$,  $g\ne e$, does not represent an invariant evolution. Indeed, if $\gamma$ is a solution of 
\[
\gamma_t = g\gamma
\]
then $\hat g\gamma$ will not be a solution unless $g$ commutes with $\hat g$, for all $\hat g\in \GL(m)$. Therefore, only adding $\gamma$ will preserve $Y^f$'s invariant property.

On the other hand, if $\gamma_t = \gamma$, then $Q$ as in \eqref{Qn} equals the identity, and from (\ref{KQ}), the evolution induced on the invariants is zero. 
}\end{comment}

To finish this section we will show that $X^f$, as defined in the previous sections, is equal to $Y^f$, for any $f$.

\begin{theorem}\label{XY}
If $X^f$ is defined as in \eqref{QF} and $Y^f$ is defined as in \eqref{yf}, then, 
\[
X^f = Y^f.
\]
\end{theorem}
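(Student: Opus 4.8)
The plan is to show that the two characterizations of the Hamiltonian polygon vector field -- one coming from the reduced twisted bracket on $\GL(m)^N/H^N$ (Theorem \ref{qnabla}) and one coming from the scalar bracket \eqref{app:ho} on $\RDO{N}{m}$ -- produce the same object by comparing them componentwise in the moving frame $\rho$. Both $X^f$ and $Y^f$ are invariant polygonal vector fields, hence of the form $\rho Q e_1$ for an appropriate matrix sequence $Q$; since $\rho$ is invertible, it suffices to prove that the relevant columns of the two matrices agree. For $X^f$ this column is pinned down by \eqref{QF}: $\T Q^f e_1 = \tfrac12(\nabla F + \T \nabla' F)e_1$, and Lemma \ref{uno} gives the entries of $\nabla' F$ and $\nabla F$ explicitly in terms of the variational derivatives $\delta_{a^r} f$. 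For $Y^f$ the defining formula is \eqref{yf}, $Y^f = r(\delta_D \F \, D)(\gamma)$, with $\delta_D\F$ computed on the section $\RDO{N}{m}$ via \eqref{scalarred}.

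First I would write out $\delta_D\F \, D$ as a pseudo-difference operator, using $\delta_D F = \sum_{r=0}^{m-1}(\T^{-m}a^r + \T^{-r})\delta_{a^r} f$ and $D = \sum_{r=0}^{m-1} a^r\T^r - \T^m$, and apply the projector $r = \tfrac12(()_+ - ()_-)$. Evaluating $r(\delta_D\F\, D)$ on the polygon $\gamma$ and using the difference equation $\T^m\gamma = \sum_{r=0}^{m-1} a^r \T^r\gamma$ (i.e. $D(\gamma)=0$) to re-expand any $\T^j\gamma$ with $j\ge m$ or $j<0$ back into the frame $\{\gamma,\T\gamma,\dots,\T^{m-1}\gamma\}$, I would read off the coefficients $q^k$ in $Y^f = \sum_{k=0}^{m-1} q^k \T^k\gamma = \rho Q^{Y} e_1$. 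In parallel, Lemma \ref{uno} and \eqref{QF} give the first column of $\T Q^{X}$; translating that relation through the matrix $A$ of \eqref{A} (which encodes exactly the same recursion $\T^m\gamma=\sum a^r\T^r\gamma$) should yield the same list of coefficients. The key bookkeeping identity will be the one already recorded in the excerpt, $\delta_{b^m} f|_{b^m=-1} = \sum_{r=0}^{m-1} b^r\delta_{b^r} f|_{b^m=-1}$, which reconciles the ``$\T^{-m}a^r + \T^{-r}$'' combination in \eqref{scalarred} with the raw $\delta_{a^r} f$ appearing in Lemma \ref{uno}; this is precisely the manifestation of left-multiplication invariance that makes the scalar reduction land on the same section $b^m=-1$.

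An alternative, perhaps cleaner, route is to avoid coordinates entirely: both constructions are defined by the requirement that $\gamma_t = X^f$ (resp. $Y^f$) induce on the invariants $\{a^r\}$ the Hamiltonian flow of $f$, and both are stated to be unique up to adding an element of $\ker D$ (for $Y^f$, the ambiguity $g\gamma$; for $X^f$, the analogous ambiguity). So it would suffice to check that $Y^f$, as defined by \eqref{yf}, satisfies the defining relation \eqref{QF} of $X^f$ after passing to the matrix $Q$ via \eqref{Qn} -- or conversely. This reduces the theorem to a single identity: $\T Q^{Y}e_1 = \tfrac12(\nabla F + \T\nabla' F)e_1$, where $Q^{Y}$ is obtained from $r(\delta_D\F\,D)(\gamma) = \rho Q^{Y} e_1$. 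Here one uses that the scalar $r$-matrix $r = \tfrac12(()_+-()_-)$ and the matrix $r$-matrix $\r = \sum_{i>j}E_{ij}\otimes E_{ji} + \tfrac12\sum_r E_{rr}\otimes E_{rr}$ are, under the correspondence between difference operators and their companion-type matrices, the same object up to the shift by $\tfrac12\Id$ already flagged after \eqref{twisted} and in Example \ref{app:gmt} -- so $r(\delta_D\F\,D)$ translates into $\tfrac12$ of a symmetric combination of left and right gradients of $F$, which is exactly the right-hand side of \eqref{QF}.

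The main obstacle I anticipate is the careful dictionary between the operator world and the matrix world: the frame $\rho=(\gamma,\T\gamma,\dots,\T^{m-1}\gamma)$ intertwines the shift $\T$ with the companion matrix $A$ of \eqref{A}, but a pseudo-difference operator acting on $\gamma$ produces, a priori, shifts $\T^j\gamma$ outside the range $0\le j\le m-1$, and one must fold these back consistently -- both the positive tail (via $D(\gamma)=0$) and the negative tail (via the non-periodicity-free recursion, i.e. inverting the companion relation). Keeping track of the periodicity/quasi-periodicity and of exactly where the ``$\tfrac12$'' terms from the diagonal part $\tfrac12\sum_r E_{rr}\otimes E_{rr}$ of $\r$ versus the $\g_0$-part of the scalar $r$-matrix land (these are the $(Q^f)_{1,1}$ entries singled out in Lemma \ref{dos}) is where an error is most likely to creep in. Once that dictionary is set up correctly, the equality $X^f = Y^f$ should follow by matching first rows/columns as in Lemma \ref{dos}, with the invariance identity $\delta_{b^m}f = \sum_r b^r\delta_{b^r}f$ doing the remaining reconciliation.
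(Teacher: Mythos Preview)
Your first approach is essentially the paper's own proof: the authors expand $\delta_D\F\,D$ explicitly, apply $r=\tfrac12(()_+-()_-)$, evaluate on $\gamma$, use $D(\gamma)=0$ to rewrite everything in the frame $\{\gamma_s,\dots,\gamma_{s+m-1}\}$, and then match the coefficient of $\gamma_{s+\ell}$ against $e_{\ell+1}^T Q^f_s e_1$ case by case ($\ell=m-1$, $1\le\ell\le m-2$, $\ell=0$), invoking Lemmas \ref{uno} and \ref{dos} and an iterated use of $\nabla F = A^{-1}\nabla'F\,A$ to walk the matrix entries down to the last row. Your identification of the diagonal $(Q^f)_{1,1}$ term as the delicate one is exactly right; that is the case the paper treats separately.

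Your ``alternative, cleaner'' route is more wishful than the paper's argument. The uniqueness statements for $X^f$ and $Y^f$ are of different flavors (one is pinned down by \eqref{QF}, the other only up to $\ker D$), so to exploit uniqueness you must still verify that $Y^f$ satisfies \eqref{QF}, which is the same coordinate computation as the first route. The remark that the scalar and matrix $r$-matrices ``are the same object'' under the operator--companion-matrix dictionary is morally correct but is not made precise enough in the paper to serve as a proof on its own; the paper does the explicit bookkeeping rather than appealing to such a correspondence.
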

\begin{proof}
From \eqref{yf} and \eqref{Qn} we know that $Y^f = r(\delta_D\F D)(\gamma)$ and $X^f = \rho Q^f e_1$. We also know that $\{\gamma_s, \gamma_{s+1},\dots, \gamma_{s+m-1}\}$ are linearly independent for any $s$. Therefore, to prove the theorem we will write $Y^f_s$ as a combination of the basis $\{\gamma_s, \gamma_{s+1},\dots, \gamma_{s+m-1}\}$ and we will show that the coefficient of $\gamma_{s+\ell}$ in $r(\delta_{D_s}\F D_s)(\gamma)$ is equal to $e_{\ell+1}^TQ_s^f e_1$. That will conclude the proof.

First we rewrite the operators as
\begin{gather*}
\delta_D\F D = \sum_{r=0}^{m-1}(\T^{-r} + \T^{-m}a^r)\delta_{a^r} f\left(\sum_{k=0}^{m-1}a^k \T^k-\T^m\right) \\ =\, \sum_{r=0}^{m-1}\sum_{k=0}^{m-1}\T^{-m}a^ra^k\delta_{a^r}f\T^k
-\sum_{r=0}^{m-1}\T^{-m}a^r\delta_{a^r}f\T^m+\sum_{r=0}^{m-1}\sum_{k=0}^r \T^{-r}a^k\delta_{a^r}f\T^k \\ +\, \sum_{r=0}^{m-1}\sum_{k=r+1}^{m-1}\T^{-r}a^k\delta_{a^r}f\T^k- \sum_{r=0}^{m-1}\T^{-r}\delta_{a^r}f\T^m.
\end{gather*}
Therefore
\begin{equation}\label{eqr}
\begin{gathered}
2r(\delta_D\F D) 
= -\sum_{r=0}^{m-1}\sum_{k=0}^{m-1}\T^{-m}a^ra^k\delta_{a^r}f\T^k
-\sum_{r=1}^{m-1}\sum_{k=0}^{r-1} \T^{-r}a^k\delta_{a^r}f\T^k \\ +\, \sum_{r=0}^{m-1}\sum_{k=r+1}^{m-1}\T^{-r}a^k\delta_{a^r}f\T^k- \sum_{r=0}^{m-1}\T^{-r}\delta_{a^r}f\T^m.
\end{gathered}
\end{equation}
Using $D(\gamma) = 0$, we can rewrite
\begin{gather*}
- \sum_{r=0}^{m-1}\T^{-r}\delta_{a_s^r}f (\gamma_{s+m})  \\ = - \sum_{r=1}^{m-1}\delta_{a_{s-r}^r}f\gamma_{s+m-r}- \delta_{a_{s}^0}f\sum_{r=0}^{m-1}a_s^r\gamma_{s+r} -\sum_{r=0}^{m-1}\sum_{k=0}^{m-1}\T^{-m}a_s^ra_s^k\delta_{a_s^r}f(\gamma_{s+k})\\  = -\sum_{r=0}^{m-1}a_{s-m}^r\delta_{a_{s-m}^r}f\gamma_s
-\sum_{r=1}^{m-1}\sum_{k=0}^{r-1} \T^{-r}a_s^k\delta_{a^r_s}f(\gamma_{s+k}) \\ = -\sum_{r=1}^{m-1}\delta_{a^r_{s-r}}f\left(\gamma_{s+m-r}-\sum_{k=r}^{m-1} a_{s-r}^s\gamma_{s+k-r}\right).
\end{gather*}
The expression for $2r(\delta_D\F D)(\gamma) $ is now rewritten in terms of our basis. 

We now proceed to compare these to the entries of $Q_s^fe_1$ by matching the coefficient of $\gamma_{s+\ell}$ to the $\ell+1$ entry of $Q_s^fe_1$.
\vskip 1ex
{\it Case $\ell=m-1$:}

One can directly check in the formulas above that the coefficient of $\gamma_{m-1}$ in $2r(\delta_{D_s}F D_s)(\gamma_s)$ is 
\[
-\delta_{a_{s-1}^1}f - a_s^{m-1}\delta_{a_s}^0f - \delta_{a_{s-1}^1}f+a_s^{m-1}\delta_{a_s}^0f = -2\delta_{a_{s-1}^1}f.
\]
On the other hand, from Lemma \ref{uno} and \ref{dos}, we have
\[
e_m^T Q_s^f e_1  = (\nabla_{s-1}\F)_{m,1} = -\delta_{a_{s-1}^1}f
\]
same as that of $r(\delta_{D_s}\F D_s)(\gamma_s)$.
\vskip 1ex
{\it Case $\ell=1,\dots,m-2$:}

Once again we will look at the formula for $2r(\delta_{D_s}\F D_s)(\gamma_s) $ above. We first notice that in order for the two sums in the middle of formula \eqref{eqr} to involve $\gamma_{s+\ell}$, we need $m-r-1>\ell$ since $m-r-1$ is the highest order involved. Straightforward inspection term by term gives us the following coefficients for $\gamma_{s+\ell}$
\begin{gather*}
-\delta_{a_{s-m+\ell}^{m-\ell}} f+\sum_{r=1}^{m-\ell-1}a_{s-r}^{r+\ell}\delta_{a_{s-r}^r}f +\sum_{r=0}^{m-\ell-1}a_{s-r}^{r+\ell}\delta_{a_{s-r}^r}f -\delta_{a_{s-m+\ell}^{m-\ell}} f - a_s^\ell\delta_{a_s^0}f
\\
= -2\delta_{a_{s-m+\ell}^{m-\ell}} f+2\sum_{r=1}^{m-\ell-1}a_{s-r}^{r+\ell}\delta_{a_{s-r}^r}f.
\end{gather*}
If we compute $e_r^T Q_s^f e_1$ for $r=2,\dots,m-1$ using Lemma \ref{dos}, and using that $\nabla_s\F = A_s^{-1} \nabla'_s\F A_s$, we have
\begin{equation}\label{qrfun}
\begin{gathered}
e_r^T Q_s^f e_1 = e_r^T\frac 12(\nabla_{s-1}\F+\nabla_s'\F)e_1 = e_r^T\nabla_{s-1}\F e_1 = e_r^TA_{s-1}^{-1} \nabla'_{s-1}\F A_{s-1}e_1 
\\ = (-a_{s-1}^r(a_{s-1}^0)^{-1}e_1^T + e_{r+1}^T)\nabla'_{s-1}\F e_2 = e_{r+1}^T\nabla_{s-2}\F e_2 +a_{s-1}^r\delta_{a_{s-1}^1}f .
\end{gathered}
\end{equation}
Repeating this process several times we will gradually shift $e_r^T\nabla_{s-1}\F e_1$ down to $e_m^T\nabla_{s-m+r-1}\F e_{m-r+1} = -\delta_{a_{s-m+r-1}^{m-r+1}} f$ (from Lemma \ref{uno}), obtaining
\begin{equation}\label{Qfor}
e_r^T Q_s^f e_1 = \sum_{k=1}^{m-r} a_{s-k}^{r+k-1}\delta_{a_{s-k}^{k}}f -\delta_{a_{s-m+r-1}^{m-r+1}} f.
\end{equation}
We can see that the coefficient of $\gamma_{n+\ell-1}$ is given by choosing $r=\ell$. That is,
\[
\sum_{k=1}^{m-\ell} a_{s-k}^{\ell+k-1}\delta_{a_{s-k}^k}f - \delta_{a_{s-m+\ell-1}^{m-\ell+1}}f
\]
same as that of $r(\delta_{D_s}\F D_s)(\gamma_s)$.
\vskip 1ex
{\it Case $\ell=1$:}

We finally need to calculate the coefficient of $\gamma_s$ in $2r(\delta_{D_s}\F D_s)(\gamma_s)$, and the first entry of $Q_s^fe_1$. Direct inspection as before tells us that the coefficient of $\gamma_s$ in \eqref{eqr} is
\[
\sum_{r=1}^{m-1} a_{s-r}^r \delta_{a_{s-r}^r} f - \sum_{r=0}^{m-1} a_{s-m}^r \delta_{a_{s-m}^r} f - a_s^0\delta_{a_s^0}f.
\]
This time, according to Lemma \ref{uno} and \ref{dos}
\[
e_1^T Q_s^f e_1  = \frac 12e_1^T \nabla_{s-1}\F e_1 - \frac 12 a_s^0\delta_{a_s^0} f.
\]
Using the same reasoning as in \eqref{qrfun}, we obtain that $e_1^T \nabla_{s-1}\F e_1$ is given by a formula similar to \eqref{Qfor} but with $r = 1$. That is
\[
e_1^T \nabla_{s-1}\F e_1 = \sum_{k=1}^{m-1} a_{s-k}^{k}\delta_{a_{s-k}^{k}}f + e_m^T\nabla_{s-m}\F e_m= \sum_{k=1}^{m-1} a_{s-k}^{k}\delta_{a_{s-k}^{k}}f - \sum_{k=0}^{m-1} a_{s-m}^{k}\delta_{a_{s-m}^{k}}f.
\]
Therefore
\[
e_1^T Q_s^f e_1 = \frac12\left(\sum_{k=1}^{m-1} a_{s-k}^{k}\delta_{a_{s-k}^{k}}f - \sum_{k=0}^{m-1} a_{s-m}^{k}\delta_{a_{s-m}^{k}}f - a_s^0\delta_{a_s^0}f\right)
\]
which is the same as that of $r(\delta_{D_s}\F D_s)(\gamma_s)$. This concludes the proof of the theorem.
\end{proof}
\begin{corollary} The reduced bracket obtained in Proposition \ref{scalarreduction}   is Poisson equivalent to the bracket defined by \eqref{reduceformula}. Furthermore, the bracket found in \cite{beffa2013hamiltonian} is Poisson equivalent to the lattice $W_m$-algebra's Poisson bracket, as defined in Proposition \ref{app:tnpoisson}.
\end{corollary}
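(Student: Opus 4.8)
The plan is to deduce the corollary from Theorem \ref{XY} together with the elementary fact that a Poisson structure on a finite-dimensional manifold is determined by its Hamiltonian vector fields: if $P_1,P_2$ are Poisson bivectors with $P_1^\sharp(\rd f)=P_2^\sharp(\rd f)$ for every smooth $f$ — equivalently, for every coordinate function — then $P_1=P_2$, since the $\rd f$ span each cotangent space. First I would place both brackets on a common space. The reduced bracket of Proposition \ref{scalarreduction} and the bracket \eqref{reduceformula} are both Poisson structures on the space of difference invariants $\{a^r\}$ of non-degenerate twisted $N$-polygons in $\RR^m$: the first is carried on the section $\RDO{N}{m}=\{\sum_{r=0}^{m-1}a^r\T^r-\T^m\}$ of Proposition \ref{scalarreduction}, the second on (an open dense subset of) $\GL(m)^N/H^N$ by Theorem \ref{reducedbracket} and the polygon correspondence preceding it, and under the polygon--operator dictionary the two realizations coincide, both being coordinatized by the $a^r$. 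By the Proposition establishing \eqref{yf}, the $\F$-Hamiltonian flow of \eqref{scalarPoisson} on the invariants is the evolution induced by a polygonal vector field $\gamma_t=Y^f$ of the form \eqref{yf}; by Theorem \ref{qnabla}, the $f$-Hamiltonian flow of \eqref{reduceformula} is the evolution induced by $\gamma_t=X^f$ of the form \eqref{QF}. The induced evolution on the invariants is well defined even though $Y^f$ itself is not, since adding an element of $\ker D$ changes nothing on the invariants (the Comment following \eqref{yf}). Now Theorem \ref{XY} gives $X^f=Y^f$, so these two Hamiltonian flows on the space of invariants coincide for every $f$; letting $f$ range over the coordinate functions $a^r_s$ then forces the two reduced Poisson bivectors to be equal. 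This is the first assertion (in fact with ``Poisson equivalent'' upgraded to ``equal'').

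For the second assertion I would unwind the remaining reductions. Writing the left--right action \eqref{app:action} as conjugation by a quasi-periodic sequence followed by left multiplication by a periodic one, as in the proof of Proposition \ref{app:tnpoisson}, the lattice $W_m$-algebra is obtained from $\pi$ by first reducing by periodic left multiplication — which is exactly the bracket on $\RDO{N}{m}$ of Proposition \ref{scalarreduction} — and then reducing by the residual conjugation action, whose quotient is $W_m$. On the geometric side, the bracket of \cite{beffa2013hamiltonian} is the Semenov-Tian-Shansky twisted bracket reduced to $\PSL(m)^N/H^N$; passing through the $\GL(m)$-reduction of Theorem \ref{reducedbracket} it is \eqref{reduceformula} further reduced by the overall scaling, i.e. by the center of $\GL(m)$, equivalently by rescaling of polygon vertices, whose quotient is again $W_m$. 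Since the polygon--operator dictionary intertwines the two residual actions, the equality of brackets from the first step descends to the quotient $W_m$, so the bracket of \cite{beffa2013hamiltonian} and the lattice $W_m$-algebra of Proposition \ref{app:tnpoisson} agree, and in particular are Poisson equivalent.

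The genuine inputs are Theorem \ref{XY}, already proved, and ``Hamiltonian fields determine the bracket,'' which is immediate; so I expect the main obstacle to be purely bookkeeping. Concretely, one must check that the chain of identifications $\RDO{N}{m}\cong\{a^r\}\cong\GL(m)^N/H^N$ and $W_m\cong\PSL(m)^N/H^N$ is genuinely compatible — in particular that the polygon correspondence intertwines the residual conjugation and scaling actions on the two sides — and keep track of the mismatch between the $\GL(m)$ framework used here and the $\SL(m)$ framework of \cite{beffa2013hamiltonian} (the extra central, or scaling, degree of freedom, which is exactly why the statement is phrased with ``Poisson equivalent'' rather than ``equal''). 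I do not anticipate any difficulty beyond this careful matching of quotients.
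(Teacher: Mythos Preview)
Your argument for the first assertion is correct and matches the paper's: both brackets live on the same space of invariants $\{a^r\}$, Theorem \ref{XY} says their Hamiltonian vector fields coincide for every $f$, and a Poisson bivector is determined by its Hamiltonian fields. The paper states this in one sentence; you have spelled it out, which is fine.

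For the second assertion your route differs from the paper's. You propose to pass to the quotient $W_m$ on both sides: reduce the bracket on $\RDO{N}{m}$ by the residual conjugation, reduce \eqref{reduceformula} by the center of $\GL(m)$, and then check that the polygon--operator dictionary intertwines these residual actions so that the equality descends. This is a valid approach, and you correctly flag the bookkeeping needed (the $\SL$/$\GL$ mismatch, the intertwining of actions). The paper instead works with \emph{sections} rather than quotients: it observes that the bracket of \cite{beffa2013hamiltonian} is the reduction of \eqref{reduceformula} to the submanifold $a^0_n=(-1)^{m-1}$, while the lattice $W_m$-algebra is the reduction of the Proposition \ref{scalarreduction} bracket to operators with the same fixed scalar term; since the two brackets on the ambient space agree by the first part, so do their reductions to this common submanifold. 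This bypasses the quotient bookkeeping entirely and avoids having to argue that the $\PSL(m)$ reduction of \cite{beffa2013hamiltonian} literally coincides with the $\GL(m)$ reduction modulo the center --- the section picks out the right representative directly. Your approach would work but requires the verifications you anticipate; the paper's is shorter.
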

\begin{proof} The first statement is a straightforward consequence of the previous theorem since $X^f$ induces one of the Poisson brackets on the invariants $a^i$ and $Y^f$ does the some on $D$ as in \eqref{D}.

To show the second part we remark that the bracket in \cite{beffa2013hamiltonian} corresponds to the reduction of \eqref{reduceformula} to the submanifold $a_n^0 = (-1)^{m-1}$, while the lattice $W_m$-algebra bracket is the reduction of the bracket defined in Proposition \ref{scalarreduction} on $D$'s that have the same value for the scalar term. The result follows.
\end{proof}

\bibliographystyle{plain}
\bibliography{latticew.bib}
\end{document}